\newlength{\tmarg}\setlength{\tmarg}{1.25in}  
\newlength{\bmarg}\setlength{\bmarg}{0.75in}  
\newlength{\lmarg}\setlength{\lmarg}{1.00in}  
\newlength{\rmarg}\setlength{\rmarg}{1.00in}  
\newtheorem{lemma}{Lemma}[section]
\newtheorem{remark}{Remark}[section]
\newtheorem{proposition}{Proposition}[section]
\newtheorem{definition}{Definition}[section]
\newtheorem{theorem}{Theorem}[section]
\begin{document}
\include{macros}
\title{Weak solutions to the steady compressible Euler equations with source terms}

\author{
  Anxiang Huang\thanks{School of Mathematical Sciences and Institute of Natural Sciences, Shanghai Jiao Tong University, Minhang, Shanghai, 200240, P. R. China ({\tt huanganx@sjtu.edu.cn}).}
}

\date{}

\maketitle


\begin{abstract}
\quad In this paper, we showed that for some given suitable density and pressure, there exist infinitely many compactly supported solutions with prescribed energy profile. The proof is mainly based on the convex integration scheme. We construct suitable subsolutions and localized plane-wave solutions to the reformulated system, and weak solutions are obtained by iterating these subsolutions.
\end{abstract}

{\bf Key words.} 
Compressible Euler equations, Stationary solution, Convex integration, Compactly supported solution

\pagestyle{myheadings}

\thispagestyle{plain}

\markboth{}{}


\section{Introduction}
\quad In this paper, we consider the steady Euler equations for an isentropic compressible fluid with source terms:
\begin{align}\label{eq1}
	\begin{cases}
		\text{div}(\rho u)=0,\\
		\text{div}(\rho u\otimes u)+\nabla p(\rho)=\mathbf{B}(\rho u),
	\end{cases}
\end{align}
where $x\in \Omega$ with $\Omega\subset \mathbb{R}^n$ or $\mathbb{T}^n$($n\geq 3$), $\rho:\Omega \rightarrow \mathbb{R}$ denotes the density of the fluid (or gas), and $u:\Omega \rightarrow \mathbb{R}^n$ denotes the velocity, $p=p(\rho)$ denotes the pressure, which is assumed continuously differentiable with $p'(\rho)>0$ on $(0,\infty)$, and $\mathbf{B}$ is a $n\times n$ constant matrix. A pair $(\rho,m)\in L^\infty(\Omega;(0,\infty)\times\mathbb{R}^n)$ is called a weak solution of \eqref{eq1} if
\begin{align*}
    \begin{cases}
         \int_{\Omega} \dfrac{m\otimes m}{\rho} \colon \nabla \Phi +\Phi\cdot \mathbf{B}m dx=0,\\
        \int_{\Omega} m\cdot\nabla \psi dx=0
     \end{cases}
\end{align*}
for every test function $\Phi,\psi\in C^\infty(\Omega;\mathbb{R}^n)$ with $\text{div }\Phi=0$.
 
\quad The Euler system expresses the conservation laws of mass and momentum, and the weak solutions of Euler equations in the sense of distributions have been studied in the theory of hyperbolic conservation laws. The well-posedness of the Euler system is one of the key issues which attract lots of interests. So far, most of well-posedness results need additional conditions (\cite{MR2547977,MR3280249}) since the Riemann problem is difficult to resolve in high dimension.

\quad On the other hand, there are some studies focus on the non-uniqueness of the solutions. The first non-uniqueness result to the Euler equations traces back to Scheffer (\cite{MR1231007}), who constructed a non-trivial weak solution of the incompressible Euler equations in two space dimensions with compact support in space-time. Later, the non-uniqueness for three-dimensional flows was established by Shnirelman (\cite{MR1476315}). De Lellis and Sz\'{e}kelyhidi showed the existence of infinitely many bounded weak solutions to the incompressible Euler solutions (\cite{MR2600877}), yielding an alternative proof of Scheffer's non-uniqueness results. After this groundbreaking work, a series of studies (\cite{MR3374958,MR3330471,MR3090182,MR3254331}), which mainly based on the same method, proved the existence and non-uniqueness of energy dissipative solutions with H\"{o}lder exponent $\alpha<1/3$. Finally, the remaining part of the proof to the Onsager's conjecture is accomplished in \cite{MR3896021,MR3866888}.

\quad All these constructions are based on the method of convex integration. The convex integration method provides an iteration scheme, which passes from a subsolution to infinitely many exact solutions with prescribed energy profile. The iteration scheme starts from a subsolution, adds perturbations in the velocity which oscillate at higher and higher frequencies in each step, producing an irregular limit that is shown to be a weak solution to the Euler system. The solutions obtained by convex integration are sometimes called wild solutions. Except for the incompressible Euler equations, the convex integration method contributes to showing the non-uniqueness of the wild solutions to some other systems, such as MHD equations (\cite{MR4198715,MR4328510,MR4362549}), SQG equations (\cite{MR3987721,MR4126319,MR4340931}), transported equations (\cite{MR3884855,MR4244826,MR4199851}) and so on.

\quad The non-uniqueness of weak solutions of the compressible isentropic Euler equations was also proved by De Lellis and Sz\'{e}kelyhidi (\cite{MR2564474}). They showed that there exist bounded initial data for which there are infinitely many weak solutions for the barotropic Euler equations. This first result on non-uniqueness for the compressible Euler equations was improved in \cite{MR3352460,MR3816641,MR3459023}. So far, most of these constructions by convex integration method for compressible Euler equations are mainly based on the incompressible scheme (\cite{MR4223016,MR3261301,MR4591578}), that is, the oscillations are only added to the velocity (or momentum), and the density remains fixed along the iteration. This leads to solutions with irregular momentum but regular densities. 

\quad In order to study the sharp non-uniqueness for the Euler equation, one possible approach is to find the largest space for the solutions which ensure the non-uniqueness. Stationary solutions are a class of solutions with special structures. In this paper, we consider the $L^\infty$ weak solutions to $\eqref{eq1}$. Our main result is the following:

\begin{theorem}\label{tm1.1}
    Let $n\geq 3$,
    \begin{itemize}
        \item [(a)]{let $\Omega=\mathbb{T}^n$, $\rho\in C(\Omega)$, then for any given continuous function $q=q(x)>0$, there exist infinitely many weak solutions $u\in L^\infty(\Omega;\mathbb{R}^n)$ to $\eqref{eq1}$ such that $\left|u\right|^2=\frac{nq}{\rho}$;}
        \item [(b)]{let $\Omega$ be an open bounded subset of $\mathbb{R}^n$, $\mathbf{B}=a\mathbf{I}_n$ for some $a>0$, $\rho\in C^1(\mathbb{R}^n)$ satisfies $\rho=\bar{\rho}>0$ for $x\in \mathbb{R}^n\backslash\Omega$, and $\int_{\Omega}p(\rho(x))dx=p(\bar{\rho})\left|\Omega\right|$. Then there exists a function $q=q(x)$ such that \eqref{eq1} has infinitely many compact supported weak solutions $u\in L^\infty(\mathbb{R}^n;\mathbb{R}^n)$ such that $\left|u\right|^2=\frac{nq}{\rho}$;}
        \item [(c)]{let $\Omega$ be an open bounded subset of $\mathbb{R}^n$, $\rho(x)=\bar{\rho}>0$ for $x\in \mathbb{R}^n\backslash\Omega$, and $0<\rho(x)<\bar{\rho}$ for $x\in \Omega$. Then there exists a function $q=q(x)$ such that \eqref{eq1} has infinitely many compact supported weak solutions $u \in L^\infty(\mathbb{R}^n;\mathbb{R}^n)$ such that $\left|u\right|^2=\frac{nq}{\rho}$.}
    \end{itemize}
\end{theorem}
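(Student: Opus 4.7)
The plan is to prove Theorem \ref{tm1.1} by reformulating \eqref{eq1} in the momentum variable $m=\rho u$ and applying the De Lellis--Székelyhidi convex integration scheme adapted to this stationary, inhomogeneous setting. Writing $m\otimes m/\rho = U + qI$ with $U$ a symmetric trace-free tensor, the momentum equation becomes the pair
\begin{equation*}
\operatorname{div}(m)=0,\qquad \operatorname{div}(U)+\nabla\bigl(q+p(\rho)\bigr)=\mathbf{B}m,
\end{equation*}
subject to the pointwise constraint $m\otimes m/\rho - U = qI$, whose trace reproduces $|u|^2=nq/\rho$. I would then call $(m,U,q)$ a \emph{subsolution} if $(m,U)$ solves this linear system distributionally, with $U$ symmetric trace-free, and if the relaxed pointwise inequality $\lambda_{\max}(m\otimes m/\rho - U)<q$ holds pointwise, which is the usual Euler wave-cone relaxation.

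The second step is to construct localized plane-wave perturbations $(\tilde m,\tilde U)$ solving the \emph{homogeneous} linear system and compactly supported in a small ball, with $|\tilde m|^2$ of order the current pointwise gap. Following the standard Beltrami-type construction I would use the Euler wave cone, multiplied by a careful cutoff and corrected by a potential-theoretic inverse of the divergence to preserve the constraints. The source term $\mathbf{B}m$ is treated perturbatively: it acts at a scale much coarser than the oscillation frequency, so an additional low-frequency corrector absorbs its contribution without spoiling the strict subsolution inequality. The convex-hull computation at each point, showing that a point strictly inside the constraint set can be moved toward the boundary $\{|m|^2=n\rho q\}$ along wave-cone directions, is unchanged from the classical compressible-Euler case.

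The third step is a Baire category argument on the space $X$ of bounded subsolutions, equipped with the $L^\infty$ weak-$*$ topology on $m$. Exact weak solutions with the prescribed energy profile correspond to zeros of the upper-semicontinuous functional $x\mapsto q(x)-\lambda_{\max}\bigl(m(x)\otimes m(x)/\rho(x)-U(x)\bigr)$; the plane-wave step produces $L^1$-decreasing perturbations of this functional of a definite size along a dense set of directions, so the set of exact solutions is a dense $G_\delta$, hence uncountable. For parts (b) and (c) the initial subsolution is arranged to vanish outside $\Omega$ and the perturbations are supported in compact subsets of $\Omega$, automatically yielding compactly supported exact solutions in $\mathbb{R}^n$.

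The main obstacle is the production of a valid initial subsolution together with a suitable choice of $q$ in each case. In (a) one can start from $m\equiv 0$ and solve $\operatorname{div}(U)=-\nabla(q+p(\rho))$ for a bounded symmetric trace-free $U$ on $\mathbb{T}^n$ by a Bogovski\u{\i}-type right inverse, then ensure $\lambda_{\max}(-U)<q$ by exploiting the freedom to add divergence-free symmetric trace-free tensors and to rescale $q$. In (b) the hypothesis $\int_\Omega p(\rho)\,dx=p(\bar\rho)|\Omega|$ is precisely the compatibility condition allowing $\nabla(q+p(\rho))-a m$ to be written as the divergence of a trace-free $U$ supported in $\Omega$ while forcing $m$ to vanish outside $\Omega$; the special structure $\mathbf{B}=aI$ is used so that $\operatorname{div}(a m)=0$ follows from $\operatorname{div}(m)=0$. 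Part (c) is similar, using $\rho<\bar\rho$ on $\Omega$ to choose $q$ large enough on $\Omega$ so that a compactly supported subsolution exists. Verifying these global compatibility and size conditions simultaneously with the strict subsolution inequality and compact support is the technical heart of the construction.
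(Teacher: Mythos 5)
Your outline reproduces the paper's argument essentially step for step: reformulate \eqref{eq1} as the linear system \eqref{eq2} with the pointwise constraint encoded by $K_{\rho,q}$ and its convex hull via $\lambda_{\max}$; build localized plane waves from the wave cone, corrected for the source term by a potential-theoretic/Bogovski\u{\i}-type inverse; run a Baire category argument on the space of strict subsolutions; and, in each case of the theorem, construct a suitable initial strict subsolution with the right support. This is the route the paper takes in Sections 2--5.

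Two details in your last paragraph are slightly off and worth flagging. In part (a) the profile $q$ is \emph{given}, so you are not free to rescale it; the strict inequality $\lambda_{\max}(-\mathbf{U})<q$ for the $\mathbf{U}$ solving $\operatorname{div}\mathbf{U}=-\nabla\bigl(p(\rho)+q\bigr)$ has to be secured by another mechanism (the paper's Proposition 5.1 is in fact equally terse on this point, simply asserting that the resulting $(0,\mathbf{U})$ is a strict subsolution). In part (b), the reason for $\mathbf{B}=a\mathbf{I}_n$ is not that ``$\operatorname{div}(am)=0$ follows from $\operatorname{div}m=0$'' (that is a tautology); rather, the construction in the paper's Proposition 4.4 defines $m=-\mathbf{B}^{-1}\operatorname{div}\mathbf{V}$ for a skew-symmetric potential $\mathbf{V}$, and one needs $\mathbf{B}^{-1}$ to commute with $\operatorname{div}$ --- i.e.\ $\mathbf{B}$ scalar --- so that $\operatorname{div}m=-\tfrac{1}{a}\operatorname{div}\operatorname{div}\mathbf{V}=0$; this is then combined with the Akramov--Wiedemann compactly supported Poisson solution (Proposition 4.1) in Proposition 4.5 to obtain the compactly supported subsolution.
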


\quad There are a few remarks in order.

\begin{remark}
    In \cite{MR3505175}, Choffrut and Sz\'{e}kelyhidi shows that the analogue of the h-principle for incompressible Euler system in $L^\infty$ also holds in the steady case. Theorem \ref{tm1.1}(a) can be regarded as an approach on the analogue of the h-principle for compressible Euler system, though it is not easy to construct a smooth stationary flow $(\rho,u)$ for system \eqref{eq1}.
\end{remark}

\begin{remark}
    The convex integration scheme in this paper can also be adapted to general densities. As mentioned above, for some given subsolutions and energy profile, the perturbations only add to the velocity and the density remains fixed. Hence, one can use this scheme to show that when the density is piecewise constant, there exist infinitely many weak stationary flows $(\rho,u)\in L^\infty$ with certain energy profiles.
\end{remark}

\begin{remark}
    As for compactly supported solutions, the main difficulty is to construct the strict subsolutions with compact supports. Hence we need more restrictions to the density. Since for compressible Euler system, the pressure $p$ is a function of the density, i.e. $p=p(\rho)$, the two restrictions in Theorem \ref{tm1.1}(b) and (c) in fact are both restrictions on densities.
\end{remark}

\quad The key ideas of this paper are as follows. Inspired by \cite{MR2600877}, we first reformulate the compressible Euler system as a differential inclusion, and define the wave-cone and relaxation set. Since the source term may not be zero, we adapt the localized plane-waves which first established by Luo, Xie and Xin (\cite{MR3459023}). Using the localized plane-waves as the building block, we can then construct weak solutions from strict subsolutions by iteration. Finally, we need to construct suitable strict subsolutions. In the case that $\Omega=\mathbb{T}^n$, the strict subsolution can be constructed by a divergence equation. In the case that $\Omega\subset\mathbb{R}^n$, inspired by \cite{MR4207175}, we employ a specific form, which enables us to construct a compactly supported solution for the Poisson equation and serves as our strict subsolution.

\quad The rest of the paper is organized as follows. In Section 2, we introduce the geometric setup and reformulate the Euler system as a differential inclusion. After that, we construct the localized plane waves, which constitute the basic of the convex integration scheme. In Section 3, we use the convex integration method, construct exact solutions from strict subsolution by adding localized plane waves. In Section 4, we construct suitable strict subsolutions for the convex integration. Finally, the proof of the Theorem \ref{tm1.1} are provided in Section 5.


\section{Geometric Setup}

\quad Inspired by De Lellis and Sz\'{e}kelyhidi, the proof of our main results are mainly based on the convex integration scheme. In this section, we reformulate the system \eqref{eq1} as a differential inclusion, and introduce the geometric concepts we need in the iteration scheme.

\quad Denote the set of symmetric, trace-free $n\times n$ matrices by
\begin{align*}
    \mathbb{S}_0^{n\times n}=\{\mathbf{A}\in\mathbb{R}^{n\times n}:\mathbf{A}^T=\mathbf{A}, \text{tr}(\mathbf{A})=0\}.
\end{align*}

For given constants $a,b>0$, denote
\begin{align*}
    K_{a,b}=\{(m,\mathbf{U})\in\mathbb{R}^n\times\mathbb{S}_0^{n\times n}:\dfrac{m\otimes m}{a}-\mathbf{U}=b\mathbf{I}_n\}.
\end{align*}

\quad A bounded weak solution $(\rho,u)$ to the system
\begin{align*}
\begin{cases}
	\nabla\cdot(\rho u)=0, \\ 
	\nabla\cdot(\rho u\otimes u)+\nabla p(\rho)=\mathbf{B}(\rho u),
\end{cases}
\end{align*}
is equivalent to a bounded quadruple $(\rho,m,\mathbf{U},q)$ satisfying
\begin{align}\label{eq2}
	\begin{cases}
	\nabla\cdot m=0, \\ 
	\nabla\cdot \mathbf{U}+\nabla(p(\rho)+q)=\mathbf{B}m,
	\end{cases}
\end{align}
in the sense of distribution under the constraint
\begin{align*}
	(m,\mathbf{U})(x)\in K_{\rho(x),q(x)}\quad\text{for a.e.}\quad x\in\mathbb{R}^n.
\end{align*}
\quad Now, one can define the subsolutions as follows.
\begin{definition}	The quadruple $\left(\rho,m,\mathbf{U},q\right)\in L^{\infty}(\Omega;(0,+\infty)\times\mathbb{R}^n\times\mathbb{S}_0^{n\times n}\times \mathbb{R})$ is called a subsolution of the system \eqref{eq1} if it solves \eqref{eq2} in the sense of distribution and satisfies
	\begin{align}\label{eq3}
		\left(m,\mathbf{U}\right)(x)\in K_{\rho(x),q(x)}^{\text{co}}.
	\end{align}
Furthermore, since the constrain set $K_{\rho,q}$ is dependent on $x$, for a family of compact sets $K_x\subset K_{\rho(x),q(x)}$ such that the map $x\mapsto K_x$ is continuous in an open set $\mathcal{D}$ in the Hausdorff distance, a subsolution $(\rho,m,\mathbf{U},q)$ is said to be strict in $\mathcal{D}$ with constrain sets $K_x$ if
\begin{align*}
    (\rho,m,\mathbf{U},q)\in C_{\text{loc}}(\mathbb{R}^n),\quad (\rho,m,\mathbf{U},q)|_{\mathcal{D}}\in C^0(\mathcal{D}),
\end{align*}
and
\begin{align*}
    (m,\mathbf{U})(x)\in \mathring{K}_{x}^{\text{co}},\quad \rho(x)>0, q(x)>0\quad\text{for}\quad x\in\mathcal{D}.
\end{align*}
\end{definition}

\quad Compare with the incompressible case, the pressure $p$ is no longer an arbitrary rapidly decreasing function. Hence, the state variables here are quadruple $(\rho,m,\mathbf{U},q)$. In order to formulate the convex hull of $K_{\rho,q}$, we need the following lemma, which is a counterpart in compressible case as \cite[Lemma 3.2]{MR2564474}.

\begin{lemma}\label{lm2.1}
    For $\mathbf{A}\in\mathbb{S}_0^{n\times n}$, let $\lambda_{\text{max}}(\mathbf{A})$ be the largest eigenvalue of $\mathbf{A}$. For $(\rho,m,\mathbf{U})\in (0,\infty)\times\mathbb{R}^n\times\mathbb{S}_0^{n\times n}$, let
    \begin{align*}
        e(\rho,m,\mathbf{U}):=\frac{d}{2}\lambda_{\text{max}}\left(\dfrac{m\otimes m}{\rho}-\mathbf{U}\right),
    \end{align*}
    then
    \begin{itemize}
        \item [(i)]{$e:(0,+\infty)\times \mathbb{R}^n\times\mathbb{S}_0^{n\times n}\rightarrow\mathbb{R}$ is convex;}
        \item [(ii)]{$\dfrac{1}{2}\dfrac{\left|m\right|^2}{\rho}\leq e(\rho,m,\mathbf{U})$, the equality holds if and only if $\mathbf{U}=\dfrac{m\otimes m}{\rho}-\dfrac{1}{n}\dfrac{\left|m\right|^2}{\rho}\mathbf{I}_n$;}
        \item [(iii)]{$\left|\mathbf{U}\right|_{\infty}\leq 2\dfrac{n-1}{n}e(\rho,m,\mathbf{U})$, where $\left|\cdot\right|_{\infty}$ denotes the matrix operator norm;}
        \item [(iv)]{let $\rho>0$ be a given function, then the convex hull of the set\begin{align*}
            K_{\rho,r}:=\left\{(m,\mathbf{U})\in\mathbb{R}^n\times\mathbb{S}_0^{n\times n}:\left|m\right|=r,\mathbf{U}=\frac{m\otimes m}{\rho}-\frac{1}{\rho}\frac{\left|m\right|^2}{n}\mathbf{I}_n\right\}
        \end{align*} can be formulate as\begin{align*}
            K_{\rho,r}^{\text{co}}=\left\{(m,\mathbf{U})\in\mathbb{R}^n\times\mathbb{S}_0^{n\times n}\colon e(\rho,m,\mathbf{U})\leq \frac{r^2}{2\rho}\right\};
        \end{align*}}
        \item [(v)]{if $(m,\mathbf{U})\in\mathbb{R}^n\times\mathbb{S}_0^{n\times n}$, then $r=\sqrt{2\rho e(\rho,m,\mathbf{U})}$ is the smallest $r$ such that $(m,\mathbf{U})\in K_{\rho,r}^{\text{co}}$.}
    \end{itemize}
\end{lemma}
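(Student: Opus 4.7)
My plan is to address the five items in order: parts (i)--(iii) reduce to convex-analytic consequences of the trace-free hypothesis, part (iv) contains the actual work, and (v) is immediate from (iv).

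For (i), I would use $\lambda_{\text{max}}(A)=\sup_{|\xi|=1}\xi^{T}A\xi$ to write
\[
e(\rho,m,\mathbf{U})\;=\;\sup_{|\xi|=1}\frac{n}{2}\Bigl(\frac{(m\cdot\xi)^{2}}{\rho}-\xi^{T}\mathbf{U}\xi\Bigr).
\]
For each fixed $\xi$, the summand is convex on $(0,\infty)\times\mathbb{R}^{n}\times\mathbb{S}_{0}^{n\times n}$: the first piece is the perspective of $s\mapsto(s\cdot\xi)^{2}$, hence jointly convex in $(\rho,m)$, and the second is linear in $\mathbf{U}$. Since a pointwise supremum of convex functions is convex, (i) follows. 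For (ii), let $\mu_{1}\ge\cdots\ge\mu_{n}$ denote the eigenvalues of $A:=m\otimes m/\rho-\mathbf{U}$; the trace-free hypothesis on $\mathbf{U}$ yields $\sum_{i}\mu_{i}=|m|^{2}/\rho$, so $\mu_{1}\ge|m|^{2}/(n\rho)$ with equality iff all $\mu_{i}$ coincide, which rearranges to the stated identity for $\mathbf{U}$. For (iii), let $\nu_{1}\ge\cdots\ge\nu_{n}$ denote the eigenvalues of $\mathbf{U}$, so $\sum\nu_{i}=0$ and $|\mathbf{U}|_{\infty}=\max(\nu_{1},-\nu_{n})$. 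Testing $\xi^{T}A\xi\le 2e/n$ on a unit eigenvector of $\mathbf{U}$ for $\nu_{n}$ and using $(m\cdot\xi)^{2}/\rho\ge 0$ gives $-\nu_{n}\le 2e/n$; the trace relation then forces $\nu_{1}=-\sum_{i\ge 2}\nu_{i}\le-(n-1)\nu_{n}\le 2(n-1)e/n$, which dominates $-\nu_{n}$ and gives the claim.

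For (iv), the inclusion $K_{\rho,r}^{\text{co}}\subseteq\{e\le r^{2}/(2\rho)\}$ is immediate from (i) together with the observation that equality in (ii) holds on $K_{\rho,r}$, so $e\equiv r^{2}/(2\rho)$ there. The reverse inclusion is the substance of the lemma, and I would adapt the lamination argument of Lemma~3.2(iv) in \cite{MR2564474} to the present $\rho$-dependent setting. Given $(m_{0},\mathbf{U}_{0})$ with $e(\rho,m_{0},\mathbf{U}_{0})<r^{2}/(2\rho)$, I would first note $|m_{0}|<r$ by (ii) and then produce rank-one moves along wave-cone directions of the form
\[
(\bar m,\bar{\mathbf U})\;=\;\Bigl(\bar m,\;\bar m\otimes\eta+\eta\otimes\bar m-\tfrac{2\bar m\cdot\eta}{n}\mathbf{I}_{n}\Bigr),
\]
which lie in the wave cone of \eqref{eq2} as soon as one can pick $\xi\ne 0$ with $\xi\perp\bar m$ and $\xi\perp\eta$; such $\xi$ exists because $n\ge 3$. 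A first move with $\bar m\perp m_{0}$ and $\eta=m_{0}/\rho$ places the $m$-component at the two symmetric sphere points $|m|=r$ at parameters $\pm t_{*}$ obtained from the quadratic $|m_{0}|^{2}+t^{2}|\bar m|^{2}=r^{2}$; further moves would then collapse the residual defect between the matrix component and the rigid form $m\otimes m/\rho-(r^{2}/(n\rho))\mathbf{I}_{n}$ defining $K_{\rho,r}$. A Carath\'eodory-type argument bundles finitely many such moves into a convex combination of $K_{\rho,r}$-points, and a closure step recovers the boundary case $e=r^{2}/(2\rho)$. Finally, (v) follows at once: $r\mapsto r^{2}/(2\rho)$ is strictly increasing on $[0,\infty)$, so $r$ is the smallest value with $(m,\mathbf{U})\in K_{\rho,r}^{\text{co}}$ precisely when $r^{2}/(2\rho)=e(\rho,m,\mathbf{U})$.

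The main obstacle will be the lamination step in (iv): one rank-one move on the sphere $\{|m|=r\}$ is straightforward, but the matrix identity defining $K_{\rho,r}$ is rigid and requires a careful iterative argument to ensure that the residual defect strictly decreases without exiting the sublevel set $\{e\le r^{2}/(2\rho)\}$. The hypothesis $n\ge 3$ is essential here, supplying a nontrivial $\{m_{0},\bar m\}^{\perp}$ and thus enough independent wave-cone directions. The density $\rho(x)>0$ enters only as a positive scalar prefactor in the quadratic data, so once $\rho$ is held fixed throughout the lamination the construction inherits from the constant-density setting of \cite{MR2564474}.
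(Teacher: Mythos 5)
Parts (i)--(iii) and (v) of your argument are correct and coincide in substance with the paper's proof: (i) via the supremum-of-convex-functions characterization (your perspective-function remark makes the joint convexity in $(\rho,m)$ cleaner than the paper's terse statement); (ii) and (iii) via the trace constraint on eigenvalues, arranged slightly differently but equivalently; and (v) as an immediate consequence of (iv).

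Part (iv), however, departs from the paper's method, and the departure is where your proposal is incomplete. The paper proves $\{e\leq r^{2}/(2\rho)\}\subset K_{\rho,r}^{\text{co}}$ by the standard extreme-point argument: the sublevel set $S_{1}$ is convex and compact by (i)--(iii), hence equals the convex hull of its extreme points, and then for any $(m,\mathbf{U})\in S_{1}\setminus K_{\rho,r}$ one diagonalizes $m\otimes m/\rho-\mathbf{U}$ and exhibits an explicit direction $(\bar m,\bar{\mathbf U})=(e_{n}/\rho,\ \rho^{-1}\sum_{i<n}m^{i}(e_{i}\otimes e_{n}+e_{n}\otimes e_{i}))$ for which the $t$-perturbation only shifts the smallest eigenvalue, so that for small $|t|$ the point remains in $S_{1}$ -- hence is not extreme. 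This argument is elementary, makes no reference to the wave cone $\Lambda$, and needs no hypothesis $n\geq 3$. Your proposal instead tries to realize $S_{1}$ as the \emph{lamination} hull of $K_{\rho,r}$, proving more than is asked and paying for it: the single rank-one move you describe (with $\bar m\perp m_{0}$, $\eta=m_{0}/\rho$) gives $A(t)=A(0)+t^{2}\bar m\otimes\bar m/\rho$, a positive rank-one perturbation whose maximal eigenvalue in general increases, so there is no guarantee you stay inside $\{e\leq r^{2}/(2\rho)\}$ all the way to $|m_{0}+t\bar m|=r$ unless $\bar m$ is chosen in a spectrally adapted way (e.g.\ along a minimizing eigenvector of $A(0)$). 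You flag this gap yourself but do not close it, and the hand-wave ``further moves collapse the residual defect'' overlooks that, once you reach $|m|=r$ \emph{inside} the sublevel set, the matrix constraint is automatic (trace of $A$ is $r^{2}/\rho$ with all eigenvalues $\leq r^{2}/(n\rho)$ forces equality), so no further moves are needed -- the entire difficulty is concentrated in reaching the sphere without exiting $S_{1}$. For the statement at hand, the paper's extreme-point route is both shorter and logically sufficient; your laminate route, if completed, would establish the stronger $\Lambda$-convex-hull identity, but as written it is a sketch with its key step unresolved.
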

\begin{proof}
    \text{(i).} By definition
    \begin{align}\label{eq4}\notag
        e(\rho,m,\mathbf{U})&=\frac{n}{2}\mathop{\max}\limits_{\xi\in S^{n-1}}\left\langle \xi,\left(\frac{m\otimes m}{\rho}-\mathbf{U}\right)\xi\right\rangle=\frac{n}{2}\mathop{\max}\limits_{\xi\in S^{n-1}}\left\langle \xi,\left\langle \xi,\frac{m}{\sqrt{\rho}}\right\rangle\frac{m}{\sqrt{\rho}}-\mathbf{U}\xi\right\rangle\\
        &=\frac{n}{2}\mathop{\max}\limits_{\xi\in S^{n-1}}\left[\left|\left\langle\xi,\frac{m}{\sqrt{\rho}}\right\rangle\right|^2-\langle \xi,\mathbf{U}\xi\rangle\right].
    \end{align}
    Note that, $\rho\mapsto\left|\langle \xi,\dfrac{m}{\sqrt{\rho}}\rangle\right|^2-\langle\xi,\mathbf{U}\xi\rangle$ is convex with respect to $x$ for any $\xi\in S^{n-1}$, then $e(\rho,m,\mathbf{U})$ is convex.

    \quad\text{(ii).} Since $\dfrac{m\otimes m}{\rho}=\left[\dfrac{m\otimes m}{\rho}-\dfrac{1}{n}\dfrac{\left|m\right|^2}{\rho}\mathbf{I}_d\right]+\dfrac{1}{n}\dfrac{\left|m\right|^2}{\rho}\mathbf{I}_d$, then by definition,
    \begin{align}\label{eq5}\notag
        e(\rho,m,\mathbf{U})&=\frac{n}{2}\mathop{\max}\limits_{\xi\in S^{n-1}}\left\langle\xi,\left(\left[\frac{m\otimes m}{\rho}-\frac{1}{n}\frac{\left|m\right|^2}{\rho}\mathbf{I}_d\right]-\mathbf{U}\right)\xi\right\rangle+\frac{\left|m\right|^2}{2\rho}\\
        &=\frac{n}{2}\lambda_{\text{max}}\left(\left[\frac{m\otimes m}{\rho}-\frac{1}{n}\frac{\left|m\right|^2}{\rho}\mathbf{I}_n\right]\right)+\frac{\left|m\right|^2}{2\rho}.
    \end{align}
    Since $\dfrac{m\otimes m}{\rho}-\dfrac{1}{n}\dfrac{\left|m\right|^2}{\rho}\mathbf{I}_n$ is trace-free and the sum of the eigenvalues to any trace-free matrix equals to zero, one has $\lambda_{\text{max}}(\rho,m,\mathbf{U})\geq 0$, where the equality holds if and only if $\mathbf{U}=\dfrac{m\otimes m}{\rho}-\dfrac{1}{n}\dfrac{\left|m\right|^2}{\rho}\mathbf{I}_n$.

    \quad\text{(iii).} By \eqref{eq4} and \eqref{eq5} one has
    \begin{align*}
        e(\rho,m,\mathbf{U})\geq \frac{n}{2}\mathop{\max}\limits_{\xi\in S^{n-1}}\left(-\langle\xi,\mathbf{U}\xi\rangle\right)=-\frac{n}{2}\lambda_{\text{min}}(\mathbf{U}).
    \end{align*}
    Hence, $-\lambda_{\text{max}}(\mathbf{U})\leq\dfrac{2}{n}e(\rho,m,\mathbf{U})$. Since $\mathbf{U}$ is trace-free, then
    \begin{align*}
        \left|\mathbf{U}\right|_{\infty}\leq(n-1)\left|\lambda_{\text{max}}(\mathbf{U})\right|\leq\dfrac{2(n-1)}{n}e(\rho,m,\mathbf{U}).
    \end{align*}

    \quad\text{(iv).} Without loss of generality, assume that $r=1$, let
    \begin{align*}
        S_1=\left\{(\rho,m,\mathbf{U})\in\mathbb{R}\times\mathbb{R}^n\times\mathbb{S}_0^{n\times n}:e(\rho,m,u)\leq\frac{1}{2\rho}\right\}.
    \end{align*}
    By definition, for any $(\rho,m,\mathbf{U})\in L_{\rho,1}$, $e(\rho,m,\mathbf{U})=\dfrac{1}{2}$. By (i), $e(\rho,m,\mathbf{U})$ is convex, then
    \begin{align*}
        L_{\rho,1}^{\text{co}}\subset S_1.
    \end{align*}
    \quad To show $S_1\subset L_{\rho,1}^{\text{co}}$, note that by (i), (ii) and (iii), $S_1$ is convex and compact, hence $S_1$ equals to the convex hull of its extreme points. It suffices to show that all the extreme points of $S_1$ are contained in $L_{\rho,1}$.

    \quad Assume that $(\rho,m,\mathbf{U})\in S_1\backslash L_{\rho,1}$, by rotation of the coordinates, one can ensure that $\dfrac{m\otimes m}{\rho}-\mathbf{U}$ is a diagonal matrix, where the diagonal terms satisfy $\dfrac{1}{n\rho}\geq \lambda_1\geq\cdots\geq\lambda_n$. Note that $(\rho,m,\mathbf{U})\notin L_{\rho,1}$ implies that $\lambda_n<\dfrac{1}{n\rho}$. In fact, if $\lambda_n=\dfrac{1}{n\rho}$, by (iii), one has $\mathbf{U}=\dfrac{m\otimes m}{\rho}-\dfrac{1}{n\rho}\mathbf{I}_n$. Since $\mathbf{U}$ is trace-free, then $\left|m\right|^2=1$. Thus $\mathbf{U}=\dfrac{m\otimes m}{\rho}-\dfrac{\left|m\right|^2}{n\rho}\text{I}_n$, that is $(\rho,m,\mathbf{U})\in L_{\rho,1}$, which leads to a contradiction.

    \quad Let $e_1,e_2,\cdots,e_n$ be the unit vectors to the corresponding coordinates. Set $m=\mathop{\sum}\limits_{i}m^ie_i$, consider $(\rho,\bar{m},\bar{\mathbf{U}})$ as
    \begin{align*}
        \bar{m}=\dfrac{e_n}{\rho},\quad \bar{\mathbf{U}}=\dfrac{1}{\rho}\mathop{\sum}\limits_{i=1}^{n-1}m^i(e_i\otimes e_n+e_n\otimes e_i).
    \end{align*}
    By direct computation yields
    \begin{align*}
        \dfrac{(m+t\bar{m})\otimes (m+t\bar{m})}{\rho}-(\mathbf{U}+t\bar{\mathbf{U}})=(\dfrac{m\otimes m}{\rho}-\mathbf{U})+\dfrac{1}{\rho^2}(2tm^n+t^2)e_n\otimes e_n.
    \end{align*}
    Since $\lambda_n<\dfrac{1}{n}$, and $e(\rho,m+t\bar{m},\mathbf{U}+t\bar{\mathbf{U}})\leq \dfrac{1}{n}$ holds for sufficiently small $t$, then $(\rho,m,\mathbf{U})+(0,t\bar{m},t\bar{\mathbf{U}})\in L_{\rho,1}$, which implies that $(\rho,m,\mathbf{U})$ is not extreme point of $L_{\rho,1}$. Hence, $S_1\subset L_{\rho,1}^{\text{co}}$.

    \quad\text{(v).} By (iv), one has
    \begin{align*}
         K_{\rho,r}^{\text{co}}=\left\{(m,\mathbf{U})\in\mathbb{R}^n\times\mathbb{S}_0^{n\times n}\colon e(\rho,m,\mathbf{U})\leq \frac{r^2}{2\rho}\right\}.
    \end{align*}
    Hence, if $(m,\mathbf{U})\in \mathbb{R}^n\times S_0^{n\times n}$ and $(m,\mathbf{U})\in K_{\rho,r}^{\text{co}}$, then $r^2\geq 2\rho e(\rho,m,\mathbf{U})$. Hence, $r=\sqrt{2\rho e(\rho,m,U)}$ is the smallest $r$ such that $(m,\mathbf{U})\in K_{\rho,r}^{\text{co}}$. The proof of the lemma is completed.
\end{proof}

\quad Thanks to the Lemma \ref{lm2.1}, we can now formulate the convex hull of the constraint set $K_{\rho,q}$ as
\begin{align*}
    K_{\bar{\rho},\bar{q}}^{\text{co}}=\{(\bar{n},\bar{\mathbf{V}})\in\mathbb{R}^n\times \mathbb{S}_0^{n\times n}:\bar{n}\otimes \bar{n}-\bar{\rho}\bar{\mathbf{V}}\leq\bar{\rho}\bar{q}\mathbf{I}_n\}.
\end{align*}

\quad We are now able to define the wave-cone and the corresponding plane-wave solutions to \eqref{eq2}.

\quad Define
\begin{align*}
    \mathcal{L}(n,\mathbf{V}):=(\nabla\cdot n,\nabla\cdot \mathbf{V}-\mathbf{B}n)^t.
\end{align*}
Observe that, if $(\rho,m,\mathbf{U},q)$ solves \eqref{eq2}, and $(n,\mathbf{V})$ satisfies
\begin{align}\label{eq6}
    \mathcal{L}(n,\mathbf{V})=0,
\end{align}
then $(\rho,m+n,\mathbf{U}+\mathbf{V},q)$ also solves \eqref{eq2}.

By this observation, we can construct the localized plane-wave solutions which form the building blocks for the iteration scheme. 

\quad We define the wave cone to the system \eqref{eq2} as
\begin{align}\label{eq7}
	\Lambda:=\{(\bar{m},\bar{\mathbf{U}})\in\mathbb{R}^{n}\times\mathbb{S}_0^{n\times n}:\text{there exists an } \eta\in\mathbb{R}^{n}\backslash\{0\} \text{ s.t. }\bar{m}\cdot \eta=0,\ \bar{\mathbf{U}}\eta=0\}.
\end{align}
Under this definition, we have the following lemma.
\begin{lemma} For $(\bar{m},\bar{\mathbf{U}})\in\Lambda$, let $h\in C^\infty(\mathbb{R})$ be any continuous differentiable function, then $(m,\mathbf{U}):=(\bar{m},\bar{\mathbf{U}})h(\eta\cdot x)$ is a plane-wave solution to 
\begin{align*}
		\nabla \cdot \bar{m}=0,\quad \nabla \cdot \bar{\mathbf{U}}=0.
\end{align*}
\end{lemma}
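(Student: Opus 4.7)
The plan is an immediate verification via the chain rule, since $h(\eta\cdot x)$ is a one-dimensional profile along the direction $\eta$ and the defining conditions of the wave cone $\Lambda$ in \eqref{eq7} are exactly the transversality conditions needed to kill the derivatives. There is no delicate analysis here; the content of the lemma is simply that the definition of $\Lambda$ is precisely calibrated to make the plane-wave ansatz $(\bar m,\bar{\mathbf U})\,h(\eta\cdot x)$ satisfy the two divergence-type constraints.

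Concretely, I would set $\varphi(x):=h(\eta\cdot x)$ so that $\nabla \varphi(x) = h'(\eta\cdot x)\,\eta$. For the vector component,
\begin{equation*}
\nabla\cdot m \;=\; \nabla\cdot(\bar{m}\,\varphi) \;=\; \bar{m}\cdot\nabla\varphi \;=\; (\bar{m}\cdot\eta)\,h'(\eta\cdot x) \;=\; 0,
\end{equation*}
using $\bar m\cdot\eta=0$ from the definition of $\Lambda$. For the matrix component, writing out the $i$-th entry,
\begin{equation*}
(\nabla\cdot \mathbf{U})_i \;=\; \sum_{j}\partial_j\bigl(\bar{U}_{ij}\,\varphi\bigr) \;=\; \sum_{j}\bar{U}_{ij}\,\eta_j\,h'(\eta\cdot x) \;=\; (\bar{\mathbf{U}}\eta)_i\,h'(\eta\cdot x) \;=\; 0,
\end{equation*}
again by the definition of $\Lambda$. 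Symmetry and trace-freeness of $\mathbf{U}$ are inherited from $\bar{\mathbf U}\in\mathbb{S}_0^{n\times n}$ since $\varphi$ is a scalar factor.

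The only conceptual point worth flagging is that this lemma produces a plane wave that is annihilated only by the pure divergence part of \eqref{eq6}, not by the full operator $\mathcal{L}(m,\mathbf{U})=(\nabla\cdot m,\,\nabla\cdot\mathbf{U}-\mathbf{B}m)^t$. The source term $\mathbf{B}m$ is not killed by the plane-wave ansatz, and this is precisely the source of the main technical difficulty in the paper: upgrading these elementary plane waves to \emph{localized} plane-wave solutions of $\mathcal L(n,\mathbf V)=0$ in the spirit of Luo--Xie--Xin (\cite{MR3459023}). The present lemma is merely the raw building block; compensating for $\mathbf{B}m$ and enforcing compact support will be the nontrivial step handled subsequently.
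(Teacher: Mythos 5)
Your proof is correct and matches the paper's argument exactly: both apply the chain rule to $\nabla\cdot(\bar m\, h(\eta\cdot x))$ and $\nabla\cdot(\bar{\mathbf U}\, h(\eta\cdot x))$ and invoke $\bar m\cdot\eta=0$, $\bar{\mathbf U}\eta=0$ from the definition of $\Lambda$. Your closing remark that the lemma only handles the divergence part of $\mathcal L$ (and not the source term $\mathbf B m$, which is later corrected via the localized construction) is accurate and a useful orientation, though not part of the proof itself.
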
\label{lm2.2}
\begin{proof}
	The straightforward computations yield
	\begin{align*}
		\nabla\cdot m&=\bar{m}\cdot\nabla(h(x\cdot \eta))=h'(x\cdot \eta)(\bar{m}\cdot \eta)=0,\\
		\nabla \cdot \mathbf{U}&=\bar{\mathbf{U}}\cdot\nabla(h(x\cdot \eta))=h'(x\cdot \eta)(\bar{\mathbf{U}}\eta)=0.
	\end{align*}
	Hence the proof of the lemma is completed.
\end{proof}

\begin{definition} 
For any given $r>0$, we call a line segment $\sigma\subset\mathbb{R}^n\times \mathbb{S}_0^{n\times n}$ admissible if
\begin{itemize}\label{lm2.3}
	\item {$\sigma$ is contained in the interior of $K_r^{\text{co}}$.}
	\item {$\sigma$ is parallel to $(a,a\otimes a)-(b,b\otimes b)$ for some $a,b\in\mathbb{R}^n$ with $\left|a\right|^2=\left|b\right|^2=r$ and $b\neq \pm a$.}
\end{itemize}
\end{definition}

\quad When the dimension $n\geq 3$, admissible line segments are in $\Lambda-$directions, that is, parallel to some $(\bar{m},\bar{\mathbf{U}})\in\Lambda$, which provides plane-wave solutions for our iteration scheme. 

\begin{lemma}\label{lm2.3} Let $w=(\rho,m,\mathbf{U},q)$ with $(m,\mathbf{U})\in \mathring{K}_{\rho,q}^{\text{co}}$, then there exist $\bar{m}\in\mathbb{R}^{n}$, $\bar{\mathbf{U}}\in\mathbb{S}_0^{n\times n}$ such that the admissible segment
\begin{align*}
    \sigma_m&=[(\rho,m-\bar{m},\mathbf{U}-\bar{\mathbf{U}},q),(\rho,m+\bar{m},\mathbf{U}+\bar{\mathbf{U}},q)]\\
    &=\{t(\rho,m-\bar{m},u-\bar{u},q)+(1-t)(\rho,m+\bar{m},u+\bar{u},q):t\in[0,1]\}
\end{align*}
satisfies $[(m-\bar{m},\mathbf{U}-\bar{\mathbf{U}}),(m+\bar{m},\mathbf{U}+\bar{\mathbf{U}})]\in\mathring{K}_{\rho,q}^{co}$ and parallel to 
\begin{align*}
	(0,a,\frac{a\otimes a}{\rho},0)-(0,b,\frac{b\otimes b}{\rho},0)
\end{align*}
for some $a,b\in\mathbb{R}^{n}$ with $\left|a\right|=\left|b\right|=\sqrt{n\rho q}$. Moreover,
\begin{align*}
	\left|\bar{m}\right|\geq \frac{C}{\sqrt{n\rho q}}(n\rho q-\left|m\right|^2).
\end{align*}

\begin{proof}
Denote $z:=(m,\mathbf{U})\in\mathring{K}_{\rho,q}^{co}$, it follows from the Carath\'{e}odory theorem that $z$ can be expressed as $z=\mathop{\sum}\limits_{i=1}^{N+1}\lambda_iz_i$, where $\lambda_i\in(0,1)$, $\sum\limits_{i=1}^{N+1} \lambda_i=1$, and $z_i=(m_i,\dfrac{m_i\otimes m_i}{\rho}-\dfrac{1}{n\rho}\left|m_i\right|^2\mathbf{I}_n)$ for some $\left|m_i\right|=\sqrt{n\rho q}$ and $m_i\neq \pm m_j$, for any $i,j\leq N+1$.
	
\quad Assume that $\lambda_i$ are in ordered, i.e. $\lambda_1\geq \lambda_2\geq\lambda_3\geq\cdots\geq\lambda_{N+1}$, then one has
\begin{align*}
	\left|m-m_1\right|\leq N\cdot\underset{2\leq i\leq N+1}{\max}\lambda_i\left|m_i-m_1\right|.
\end{align*}
Let $j>1$ be such that
\begin{align*}
\lambda_j\left|m_j-m_1\right|=\underset{2\leq i\leq N+1}{\max}\lambda_i\left|m_i-m_1\right|.
\end{align*}
Let 
\begin{align*}
(\bar{m},\bar{\mathbf{U}})&=\frac{1}{2}\lambda_j(z_j-z_1)\\
&=\frac{1}{2}\lambda_j\left(m_j-m_1,\left(\frac{m_j\otimes m_j}{\rho}-\frac{1}{n\rho}\left|m_j\right|^2\mathbf{I}_n\right)-\left(\frac{m_1\otimes m_1}{\rho}-\frac{1}{n\rho}\left|m_1\right|^2\mathbf{I}_n\right)\right).
\end{align*}
Since $\left|m_i\right|=\sqrt{n\rho q}$, then
\begin{align*}
    (\frac{m_j\otimes m_j}{\rho}-\frac{1}{n\rho}\left|m_j\right|^2\mathbf{I}_n)-(\frac{m_1\otimes m_1}{\rho}-\frac{1}{n\rho}\left|m_1\right|^2\mathbf{I}_n)=\frac{m_j\otimes m_j}{\rho}-\frac{m_1\otimes m_1}{\rho}.
\end{align*}
\quad We can now define the segment $\sigma_m$ as
\begin{align*}
	\sigma_m:=[(\rho,z-\frac{\lambda_j}{2}(z_j-z_1),q),(\rho,z+\frac{\lambda_j}{2}(z_j-z_1),q)],
\end{align*}
moreover,
\begin{align*}
	\left|\bar{m}\right|\geq\frac{1}{2N}\left|m-m_1\right|&\geq\frac{1}{2N}(\sqrt{n\rho q}-\left|m\right|)\geq \frac{1}{2N}\frac{(\sqrt{n\rho q}-\left|m\right|)(\sqrt{n\rho q}+\left|m\right|)}{2\sqrt{n\rho q}}\\
	&=\frac{1}{4\sqrt{n\rho q}\cdot N}(n\rho q-\left|m\right|^2).
\end{align*}
This finishes the proof of the lemma.
\end{proof}
\end{lemma}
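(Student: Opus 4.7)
The plan is to apply Carath\'{e}odory's theorem to decompose $(m, \mathbf{U})$ into a convex combination of extreme points of $K_{\rho, q}^{\text{co}}$, and then pick two such extreme points whose chord provides the admissible direction. The key geometric observation, which makes the direction automatically land in the required form, is that by Lemma \ref{lm2.1} every extreme point of $K_{\rho,q}^{\text{co}}$ has the shape $z_i = \big(m_i,\, \frac{m_i \otimes m_i}{\rho} - \frac{|m_i|^2}{n\rho}\mathbf{I}_n\big)$ with $|m_i|^2 = n\rho q$ fixed. Thus the scalar correction $\frac{|m_i|^2}{n\rho}\mathbf{I}_n$ is the same constant across all extreme points, so any difference $z_j - z_i$ is automatically parallel to $(0, a, \frac{a\otimes a}{\rho}, 0) - (0, b, \frac{b \otimes b}{\rho}, 0)$ with $|a| = |b| = \sqrt{n\rho q}$.

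Concretely, I would first invoke Carath\'{e}odory (in the enhanced form permitting $N+1$ terms with positive weights and $m_i \neq \pm m_j$) to write
\begin{align*}
(m, \mathbf{U}) = \sum_{i=1}^{N+1} \lambda_i z_i, \qquad \lambda_i > 0,\ \sum_i \lambda_i = 1.
\end{align*}
After relabeling so that $\lambda_1$ is the largest weight, I would pick the index $j \geq 2$ maximizing $\lambda_i |m_i - m_1|$ and define $(\bar m, \bar{\mathbf{U}}) = \tfrac{\lambda_j}{2}(z_j - z_1)$.

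To verify $\sigma_m \subset \mathring{K}_{\rho, q}^{\text{co}}$, I would rewrite each endpoint as
\begin{align*}
(m \pm \bar m, \mathbf{U} \pm \bar{\mathbf{U}}) = \Big(\lambda_1 \mp \tfrac{\lambda_j}{2}\Big) z_1 + \Big(\lambda_j \pm \tfrac{\lambda_j}{2}\Big) z_j + \sum_{i \neq 1, j} \lambda_i z_i,
\end{align*}
which, since $\lambda_1 \geq \lambda_j$, is a convex combination with strictly positive weights involving at least two distinct extreme points. Using the characterization $\mathring{K}_{\rho, q}^{\text{co}} = \{e(\rho, \cdot, \cdot) < nq/2\}$ from Lemma \ref{lm2.1}(iv), together with convexity of $e$ and convexity of $\mathring{K}_{\rho, q}^{\text{co}}$, the whole segment then lands inside.

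For the norm lower bound, the identity $\sum_{i \geq 2} \lambda_i(m_i - m_1) = m - m_1$ combined with the maximizing choice of $j$ yields $|m - m_1| \leq N \lambda_j |m_j - m_1| = 2N |\bar m|$, whence $|\bar m| \geq \frac{1}{2N}|m - m_1| \geq \frac{1}{2N}(\sqrt{n\rho q} - |m|) \geq \frac{1}{4N \sqrt{n\rho q}}(n\rho q - |m|^2)$, using the factorization $\sqrt{n\rho q} - |m| = (n\rho q - |m|^2)/(\sqrt{n\rho q} + |m|)$. The main obstacle I anticipate is the interior verification: having several positive coefficients and at least two distinct $z_i$'s is necessary but not automatically sufficient for strict inclusion in $\mathring{K}_{\rho, q}^{\text{co}}$, and one must check strictness of Jensen's inequality for $e$ along the chord --- exploiting that the extreme points $z_i$ are not aligned in the relevant direction --- to conclude $e(\rho, m \pm \bar m, \mathbf{U} \pm \bar{\mathbf{U}}) < nq/2$ strictly.
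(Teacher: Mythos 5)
Your proposal follows essentially the same route as the paper: the same enhanced Carathéodory decomposition with extreme points $z_i$ of the form $(m_i, \frac{m_i\otimes m_i}{\rho}-\frac{|m_i|^2}{n\rho}\mathbf{I}_n)$, the same maximizing choice of index $j$, the same definition $(\bar m,\bar{\mathbf U})=\frac{\lambda_j}{2}(z_j-z_1)$, and the same chain of inequalities for the lower bound on $|\bar m|$. The one subtlety you flag at the end (that strict positivity of coefficients alone does not immediately give membership in the interior) is indeed glossed over in the paper as well; it is resolved by noting that the enhanced Carathéodory form provides affinely independent $z_i$ spanning a full-dimensional simplex, so that any convex combination with strictly positive weights lies in the interior of that simplex and hence in $\mathring K_{\rho,q}^{\text{co}}$, rather than via a strict Jensen argument.
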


\begin{remark}
    Compared with \cite[Lemma 4.3]{MR2600877}, Lemma \ref{lm2.3} can be regarded as the property of the admissible segments in compressible case. Though the density is fixed along the iteration scheme, the spatial dimension $n\geq 3$ ensures the wave cone big enough, thus, we can still find plane-wave solutions in $\Lambda-$direction. 
\end{remark}

\quad We next show that, under this definition of wave-cone, we can obtain plane-wave solutions in $\Lambda-$directions:
\begin{lemma}\label{lm2.4} Let $n\geq 3$, for any $w',\ w''\in K_{\bar{\rho},\bar{q}}$ with $w'\neq w''$, we have $w'-w''\in\Lambda$.
\end{lemma}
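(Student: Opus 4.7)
The plan is to exploit the very explicit parametrization of $K_{\bar\rho,\bar q}$. Taking traces in the defining relation $m\otimes m/\bar\rho-\mathbf{U}=\bar q\mathbf{I}_n$ and using $\operatorname{tr}\mathbf{U}=0$, I read off $|m|^2=n\bar\rho\bar q$, so every element $w\in K_{\bar\rho,\bar q}$ has the form $w=(m,\ m\otimes m/\bar\rho-\bar q\mathbf{I}_n)$ with $m$ lying on the sphere of radius $\sqrt{n\bar\rho\bar q}$ in $\mathbb{R}^n$. In particular $\mathbf{U}$ is a function of $m$, so if $w'=(m_1,\mathbf{U}_1)\neq w''=(m_2,\mathbf{U}_2)$ in $K_{\bar\rho,\bar q}$ then necessarily $m_1\neq m_2$.

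Next I compute the candidate for the wave-cone direction. Since the $\bar q\mathbf{I}_n$ terms cancel,
\begin{equation*}
w'-w''=\Bigl(m_1-m_2,\ \frac{m_1\otimes m_1-m_2\otimes m_2}{\bar\rho}\Bigr),
\end{equation*}
and the second component is trace-free because $|m_1|^2=|m_2|^2=n\bar\rho\bar q$. According to the definition \eqref{eq7} of $\Lambda$, I need to produce $\eta\in\mathbb{R}^n\setminus\{0\}$ such that $(m_1-m_2)\cdot\eta=0$ and $(m_1\otimes m_1-m_2\otimes m_2)\eta=0$. Using the identity $(m\otimes m)\eta=(m\cdot\eta)m$, the second condition reads $(m_1\cdot\eta)m_1=(m_2\cdot\eta)m_2$.

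The key step is now a dimension count: the linear subspace $V=\operatorname{span}\{m_1,m_2\}\subset\mathbb{R}^n$ has dimension at most $2$, so its orthogonal complement $V^\perp$ has dimension at least $n-2\geq 1$ because $n\geq 3$. Pick any $\eta\in V^\perp\setminus\{0\}$. Then $m_1\cdot\eta=m_2\cdot\eta=0$, which trivially gives $(m_1-m_2)\cdot\eta=0$ and makes both sides of $(m_1\cdot\eta)m_1=(m_2\cdot\eta)m_2$ vanish. Hence $(n,\mathbf{V}):=w'-w''$ together with this $\eta$ satisfies the two defining conditions of $\Lambda$, proving $w'-w''\in\Lambda$.

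There is essentially no analytic obstacle here; the only thing to watch is that the hypothesis $n\geq 3$ is used in an essential way (if $n=2$, two linearly independent vectors $m_1,m_2$ already span $\mathbb{R}^n$ and $V^\perp=\{0\}$), which matches the standing assumption of the theorem and the remark following Lemma \ref{lm2.3} that the wave cone is large enough precisely because of the dimensional restriction.
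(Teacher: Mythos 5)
Your proposal is correct and takes essentially the same route as the paper: both arguments choose $\eta$ (the paper calls it $\xi$) orthogonal to both momentum vectors, using $n\geq 3$ to guarantee such a nonzero $\eta$ exists, and then observe that all terms in the $\Lambda$-conditions vanish. Your version is slightly more explicit about the trace constraint $|m|^2=n\bar\rho\bar q$ and writes the matrix identity $(m\otimes m)\eta=(m\cdot\eta)m$ cleanly, but this is the same proof.
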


\begin{proof}
For $(\bar{n}',\bar{\mathbf{V}}')$, $(\bar{n}'',\bar{\mathbf{V}}'')\in K_{\bar{\rho},\bar{q}}$, set $(\bar{n},\bar{\mathbf{V}})=(\bar{n}',\bar{\mathbf{V}}')-(\bar{n}'',\bar{\mathbf{V}}'')$. Let $\xi\in\mathbb{R}^n\backslash\{0\}$. Since $n\geq 3$, there exists a $\xi$ such that $\xi\cdot \bar{n}'=\xi\cdot\bar{n}''=0$. Hence, we have $\xi\cdot \bar{n}=\xi \cdot(\bar{n}'-\bar{n}'')=0$. Furthermore, it holds that
\begin{align*}
\bar{\mathbf{V}}\cdot \xi=\frac{1}{\bar{\rho}}(\bar{n}'\otimes\bar{n}'-\bar{n}''\otimes\bar{n}'')\cdot \xi=\frac{1}{\bar{\rho}}\xi\cdot\bar{n}'(\bar{n}'-\bar{n}'')=0.
\end{align*}
This finishes the proof of the lemma.
\end{proof}

\quad When the source term $\mathbf{B}$ is non-zero, for $w_1,w_2\in K_{\bar{\rho},\bar{q}}$, there may not be plane-wave solutions in $\Lambda-$direction. In \cite{MR3459023}, Luo, Xie and Xin proposed the localized plane wave. They observed that in high-frequency regime, localized plane waves with non-trivial sources terms can be constructed as perturbations of the plane waves without source term. Inspired by their construction, one can construct localized plane-waves in steady case. The next lemma is devoted to correct the errors from the source terms.

\begin{lemma}\label{lm2.5}(\cite[Lemma 8]{MR3459023})
For any $f\in C_c^{\infty}(\mathbb{R}^n;\mathbb{R}^n)$, there exists a $\mathcal{R}[f]\in C^{\infty}(\mathbb{R}^n;\mathbb{S}_0^{n\times n})$ satisfying
\begin{align*}
	\nabla \cdot \mathcal{R}[f]=f.
\end{align*}
Furthermore, $\mathcal{R}$ satisfies the following properties:
\begin{enumerate}[(1)]
	\item [(i)] {$\mathcal{R}$ is a linear operator;}
	\item [(ii)] {$\mathcal{R}[\Delta^2f]$ is a linear combination of third order derivatives of $f$;}
	\item [(iii)] {$\text{supp }\mathcal{R}[\Delta^2f]\subset \text{supp } f$ and $\int_{\mathbb{R}^n}\mathcal{R}[\Delta^2f]dx=0.$;}
	\item [(iv)] {there exists a constant $0<\alpha<1$ such that\begin{align*} \|\mathcal{R}[f]\|_{C^{\alpha}(\mathbb{R}^n)}\leq C\max(\|f\|_{L^{1}(\mathbb{R}^n)},\|f\|_{L^{\infty}(\mathbb{R}^n)}),\end{align*}
	
	where the constants $C$ and $\alpha$ depend only on dimension $n$.}
\end{enumerate}
\end{lemma}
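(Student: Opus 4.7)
The plan is to give an explicit formula for $\mathcal{R}[f]$ built from Newtonian potentials, then read off properties (i)--(iv) directly from that formula. Since the source term is simply a vector field $f$, and the target must be symmetric and trace-free, I would look for an operator of the form
\begin{equation*}
\mathcal{R}[f]_{ij} = A(\partial_i u_j + \partial_j u_i) + B\,\delta_{ij}\,\nabla\!\cdot u + C\,\partial_i\partial_j w,
\end{equation*}
where $u := \Delta^{-1}f$ and $w := \Delta^{-2}(\nabla\!\cdot f)$ are defined via convolution with the Newtonian kernel. Imposing trace-free gives $2A+nB+C=0$; computing the divergence (using $\Delta u_i = f_i$ and $\Delta w = \nabla\!\cdot u$) forces $A=1$ and $A+B+C=0$. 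Solving yields $A=1$, $B=-\frac{1}{n-1}$, $C=-\frac{n-2}{n-1}$. With this choice (i) is tautological and property (1) (divergence $=f$) holds by construction.

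For (ii) and (iii), I would substitute $f\leadsto \Delta^2 f$ and use $\Delta^{-1}\Delta^2 = \Delta$ and $\Delta^{-2}\Delta^2 = \mathrm{Id}$ (valid on compactly supported functions, up to fixing the correct fundamental solution). The resulting expression
\begin{equation*}
\mathcal{R}[\Delta^2 f]_{ij} = \partial_i\Delta f_j + \partial_j\Delta f_i - \tfrac{1}{n-1}\delta_{ij}\,\Delta(\nabla\!\cdot f) - \tfrac{n-2}{n-1}\partial_i\partial_j(\nabla\!\cdot f)
\end{equation*}
is a pure differential operator of order three applied to $f$; hence $\mathcal{R}[\Delta^2 f]$ is supported where $f$ is, and integrating each component over $\mathbb{R}^n$ gives $0$ since every term is a derivative of a compactly supported function.

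The main technical point is (iv). Schematically, $\mathcal{R}[f]$ is a linear combination of $\partial_i\Delta^{-1}f_j$ and $\partial_i\partial_j\partial_k\Delta^{-2}f_k$. Both operators have kernels of homogeneity $1-n$ near the origin, integrable locally, and after one more derivative they become Calder\'on--Zygmund operators. For compactly supported $f$, I would split the estimate into the local part (bounded by $\|f\|_{L^\infty}$ via a direct H\"older estimate of the Newtonian integral $\int |x-y|^{1-n}|f(y)|dy$, using that the modulus of continuity of $|x|^{1-n}$ integrated over a bounded set yields a H\"older modulus with exponent any $\alpha<1$) and the far-field part (bounded by $\|f\|_{L^1}$ since the kernel decays). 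Taking $\alpha$ slightly less than $1$ and absorbing the implicit constants into a single $C=C(n)$ yields $\|\mathcal{R}[f]\|_{C^\alpha}\le C\max(\|f\|_{L^1},\|f\|_{L^\infty})$, as required.

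The main obstacle I anticipate is keeping the $\Delta^{-2}$ term well defined on all of $\mathbb{R}^n$: for $n=3,4$ the Newtonian kernel of $\Delta^{-2}$ grows polynomially at infinity, so one must either subtract a polynomial correction (which lies in the kernel of $\Delta^2$ and hence does not affect the divergence identity) or, alternatively, only ever apply $\partial_i\partial_j\partial_k$ to the potential, which regularizes its behavior at infinity and makes all manipulations rigorous. Once this technical subtlety is handled the rest is routine, and the argument is entirely analogous to the construction in \cite{MR3459023} which we cite.
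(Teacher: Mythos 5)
The paper does not prove this lemma; it cites it directly from Luo--Xie--Xin [Lemma 8, MR3459023] without proof, so there is no in-paper argument to compare against. Your reconstruction via Newtonian potentials is correct and is the standard route: the linear system from the trace-free and divergence conditions is solved correctly ($A=1$, $B=-\frac{1}{n-1}$, $C=-\frac{n-2}{n-1}$); after substituting $\Delta^2 f$ the expression collapses to a constant-coefficient third-order differential operator, which immediately gives (ii) and (iii); and the H\"older estimate (iv) follows from the fact that the convolution kernels $\partial_i N$ and $\partial_i\partial_j\partial_k N_2$ are both homogeneous of degree $1-n$, so the near-field piece is controlled by $\|f\|_{L^\infty}$ (with the usual logarithmic loss absorbed into $\alpha<1$) and the far-field piece by $\|f\|_{L^1}$. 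You also correctly identify and resolve the only real subtlety, namely that for $n=3,4$ the biharmonic potential $\Delta^{-2}$ is not globally defined, but the operator $\partial_i\partial_j\partial_k\Delta^{-2}$ has a kernel of homogeneity $1-n$ and is therefore well defined as a tempered distribution, which is all the formula ever uses.
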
	

We are now ready to construct the localized plane-wave solutions to \eqref{eq2}.

\begin{lemma}\label{lm2.6}
Suppose that $w,w_1,w_2\in\mathbb{R}^n\times\mathbb{S}_0^{n\times n}$ satisfy
\begin{align*}
	w=\mu_1w_1+\mu_2w_2, \ \mu_1+\mu_2=1, \ \bar{w}=w_2-w_1\in\Lambda.
\end{align*}

Given an open set $\mathcal{O}$ and $\varepsilon>0$, there exists a $\tilde{w}=(\tilde{n},\tilde{\mathbf{V}})\in C_c^{\infty}(\mathcal{O})$ satisfying the following properties:
\begin{enumerate}[(a)]
	\item [(i)] {$\int_{\mathcal{O}}\tilde{w}(x)dx=0$, $\nabla\cdot\tilde{n}=0$ and $\nabla\cdot\tilde{\mathbf{V}}-\mathbf{B}\tilde n=0$;}
	\item [(ii)] {$\text{dist}(w+\tilde{w}(x),[w_1,w_2])<\varepsilon$ for $x\in \mathcal{O}$;}
	\item [(iii)] {there exist two disjoint open sets $\mathcal{O}_i\subset \mathcal{O}$ such that for $i=1,2$,$$|w+\tilde{w}(x)-w_i|<\varepsilon\ \text{for}\ x\in\mathcal{O}_i;\quad  |\mathcal{H}^{n}(\mathcal{O}_i)-\mu_i\mathcal{H}^{n}(\mathcal{O})|<\varepsilon.$$}
\end{enumerate}
\end{lemma}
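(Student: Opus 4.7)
The plan is to build $\tilde w=(\tilde n,\tilde{\mathbf V})$ as a high-frequency oscillation in the $\Lambda$-direction of $\bar w$, cut off to $\mathcal O$ via a potential representation that preserves $\nabla\cdot\tilde n=0$ exactly, and then to absorb the source-term mismatch by a small correction using the operator $\mathcal R$ of Lemma \ref{lm2.5}.

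Since $\bar w=w_2-w_1\in\Lambda$, choose $\eta\in\mathbb R^n\setminus\{0\}$ with $\bar n\cdot\eta=0$ and $\bar{\mathbf V}\eta=0$, and fix a smooth $1$-periodic mean-zero profile $h:\mathbb R\to\mathbb R$ whose graph consists of two plateaus at heights $-\mu_2$ and $\mu_1$ occupying fractions $\mu_1$ and $\mu_2$ of each period, connected by smooth transitions of width $\delta\ll\varepsilon$. Note $w-\mu_2\bar w=w_1$ and $w+\mu_1\bar w=w_2$, so the non-localized field $w+\bar w\,h(N\eta\cdot x)$ already takes values close to $\{w_1,w_2\}$ with the correct measure fractions. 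To localize, pick $\chi\in C_c^\infty(\mathcal O)$ with $\chi\equiv 1$ on a set $\mathcal O'$ of measure $|\mathcal O|-\varepsilon/2$, choose an auxiliary direction $\xi$ independent of $\eta$ (possible since $n\ge 3$), and introduce skew-symmetric tensor potentials $A_{ik}=\bar n_i\xi_k-\bar n_k\xi_i$ and $B_{ijk}$ (symmetric in $(i,j)$, skew in $(j,k)$) so that one $\partial_k$ applied to $A_{ik}H(N\eta\cdot x)/N$ reproduces $\bar n_i h(N\eta\cdot x)$ up to the scalar $\xi\cdot\eta$, and similarly for $\bar{\mathbf V}$. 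Setting
\[
\tilde n_i=\tfrac{1}{N(\xi\cdot\eta)}\partial_k\bigl[\chi A_{ik}H(N\eta\cdot x)\bigr],\qquad
\tilde{\mathbf V}^{(0)}_{ij}=\tfrac{1}{N(\xi\cdot\eta)}\partial_k\bigl[\chi B_{ijk}H(N\eta\cdot x)\bigr],
\]
with $H'=h$, the antisymmetry in the differentiated index yields $\nabla\cdot\tilde n=0$ and $\nabla\cdot\tilde{\mathbf V}^{(0)}=0$ automatically, while the leading term is exactly $\bar w\,\chi(x)h(N\eta\cdot x)$ up to a remainder of order $1/N$ supported on $\mathrm{supp}\,\nabla\chi$.

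Finally, set $\tilde{\mathbf V}=\tilde{\mathbf V}^{(0)}+\mathcal R[\mathbf B\tilde n]$, which enforces $\nabla\cdot\tilde{\mathbf V}-\mathbf B\tilde n=0$; Lemma \ref{lm2.5}(iii) keeps $\tilde{\mathbf V}\in C_c^\infty(\mathcal O)$ with zero mean, so (i) holds. Property (iii) follows by taking $\mathcal O_1,\mathcal O_2$ to be the preimages in $\mathcal O'$ of the two plateaus of $h(N\eta\cdot x)$, the measure estimate coming from equidistribution of the fast phase $N\eta\cdot x$ as $N\to\infty$. The main obstacle is (ii), equivalently the smallness of $\mathcal R[\mathbf B\tilde n]$ in $C^0$: since $\tilde n$ is (up to cutoff boundary terms) a frequency-$N$ oscillation along $\eta$, writing it as $\Delta^2 g$ with $g$ obtained by taking enough $\eta$-antiderivatives gains four powers of $N$, while the third-derivative reconstruction in Lemma \ref{lm2.5}(ii) costs three, leaving an $O(1/N)$ decay that, combined with the $C^\alpha$ bound in Lemma \ref{lm2.5}(iv), gives $\|\mathcal R[\mathbf B\tilde n]\|_{C^0}=O(1/N)$. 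Choosing $\delta\ll\varepsilon$ and then $N$ sufficiently large secures (ii) and completes the construction.
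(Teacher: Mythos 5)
Your overall architecture matches the paper's: oscillate in the $\Lambda$-direction against a two-plateau profile, localize with a cutoff, and then repair both the divergence constraint and the source term. But two steps in the execution are genuinely broken.

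First, the tensor potential you propose for $\tilde{\mathbf V}^{(0)}$ does not exist. You want $B_{ijk}$ symmetric in $(i,j)$ (so that $\tilde{\mathbf V}^{(0)}$ is symmetric) and skew in $(j,k)$ (so that $\partial_j\partial_k$ annihilates it). But a $3$-tensor with those two symmetries vanishes identically: from $B_{ijk}=B_{jik}=-B_{jki}=-B_{kji}=B_{kij}=B_{ikj}=-B_{ijk}$ one gets $B\equiv 0$. So the claimed one-derivative potential ansatz cannot produce a nonzero, symmetric, automatically divergence-free $\tilde{\mathbf V}^{(0)}$. (The analogous ansatz for $\tilde n$ with skew $A_{ik}$ is fine; the problem is only on the matrix side, where symmetry fights the skew structure.) The paper avoids this entirely: it never tries to make $\nabla\cdot\mathbf V'$ vanish exactly. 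It defines $\mathbf V'=\lambda^{-6}\Delta^3[\bar{\mathbf V}h_6(\lambda\xi\cdot x)\phi]$ and then feeds the full mismatch $\mathbf B(n'+n'')-\nabla\cdot\mathbf V'$ into $\mathcal R$; the residual $\nabla\cdot\mathbf V'$ is automatically $O(\lambda^{-1})$ because $\bar{\mathbf V}\xi=0$ kills the leading term, so no potential is needed.

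Second, your argument for the smallness and compact support of the $\mathcal R$-correction is not yet a proof. You write $\tilde n=\Delta^2 g$ ``by taking enough $\eta$-antiderivatives,'' but a compactly supported mean-zero $\tilde n$ does not in general admit a compactly supported $g$ with $\Delta^2 g=\tilde n$: iterated antiderivatives along a single direction only stay compactly supported when successive directional moments vanish, and the cutoff boundary terms contribute nonzero moments. Without a compactly supported $g$, you cannot invoke Lemma~\ref{lm2.5}(iii) (support control) or the third-derivative structure in Lemma~\ref{lm2.5}(ii), so neither the claimed compact support of $\tilde{\mathbf V}$ nor the $O(1/N)$ bound is established. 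The paper closes this gap by building the $\Delta$-structure in from the start: since $n'$ and $\mathbf V'$ are defined as $\lambda^{-6}\Delta^3$ of a compactly supported function, the quantity fed into $\mathcal R$ is manifestly of the form $\lambda^{-6}\Delta^2 f$ with $f$ compactly supported and with five $x$-derivatives landing on $h_6(\lambda\xi\cdot x)$, giving a clean $O(\lambda^{-1})$ in $C^\alpha$. If you want to keep your antisymmetric potential for $\tilde n$, the cheapest fix is to adopt the paper's trick for $\tilde{\mathbf V}$: drop the $B_{ijk}$ ansatz, take $\tilde{\mathbf V}^{(0)}=\lambda^{-6}\Delta^3[\bar{\mathbf V}h_6(\lambda\eta\cdot x)\chi]$, and set $\tilde{\mathbf V}=\tilde{\mathbf V}^{(0)}+\mathcal R[\mathbf B\tilde n-\nabla\cdot\tilde{\mathbf V}^{(0)}]$, noting that the argument of $\mathcal R$ is again $\lambda^{-6}\Delta^2$ of a compactly supported function once you expand and use $\bar{\mathbf V}\eta=0$.
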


\begin{proof}
\quad Let $\delta$ be a small positive constant and $\phi\in C_c^{\infty}(\mathcal{O})$ be a smooth cut-off function satisfying
\begin{align*}
	0\leq\phi\leq 1\quad\text{and}\quad \mathcal{H}^n\left(\{x\in\mathcal{O}:\phi(x)\neq 1\}\right)<\delta.
\end{align*}
Let
\begin{align*}
	h(s)=\begin{cases}
		-\mu_2,\ \ \ &s\in(0,\mu_1],\\
		\mu_1,\ \ \ &s\in(\mu_1,1].
	\end{cases}
\end{align*}
Clearly, one has $\int_0^1h(s)ds=-\mu_2 \mu_1+\mu_1(1-\mu_1)=0$. 

\quad Extend $h$ as a periodic function with period $1$ and let $h_0$ be a smooth approximation of $h$ satisfying
\begin{align*}
	-\mu_2\leq h_0\leq \mu_1,\quad\mathcal{H}^1\left(\{s\in[0,1]:h(s)\neq h_0(s)\}\right)<\delta,\quad\int_0^1h_0(s)ds=0.
\end{align*}
Define $h_k$ by induction as
\begin{align*}
	\tilde{h}_{k+1}(s)=\int_0^sh_k(t)dt,\quad h_{k+1}(s)=\tilde{h}_{k+1}(s)-\int_0^1\tilde{h}_{k+1}(t)dt.
\end{align*}
Hence, for $k\geq 0$, one has
\begin{align*}
	\dfrac{d^jh_k}{ds^j}=h_{k-j},\quad  \int_0^1h_k(s)ds=\int_0^1\left(\tilde{h}_k(s)-\int_0^1\tilde{h}_k(t)dt\right)ds=0,
\end{align*}
and
\begin{align*}
	\|h_k\|_{L^{\infty}}\leq\|h_{k-1}\|_{L^{\infty}}\leq\cdots\leq\|h_0\|_{L^{\infty}}.
\end{align*}
\quad Denote $\bar{w}=\left(\bar{n},\bar{\mathbf{V}}\right)$, and let $\xi\in\mathbb{R}^n\backslash\{0\}$ be the associated vector with $\bar{w}\in\Lambda$. For a given large constant $\lambda\in\mathbb{R}$, let
\begin{align*}
	n'=\lambda^{-6}\Delta^3[\bar{n}h_6(\lambda\xi\cdot x)\phi],\quad \mathbf{V}'=\lambda^{-6}\Delta^3[\bar{\mathbf{V}}h_6(\lambda\xi\cdot x)\phi],
\end{align*}
and
\begin{align*}
	n''=-\nabla\Delta^{-1}\nabla \cdot n',\quad \mathbf{V}''=\mathcal{R}[\mathbf{B}(n'+n'')-\nabla\cdot \mathbf{V}'].
\end{align*}
\quad Define
\begin{align*}
	\tilde{w}=(n,\mathbf{V})=(n'+n'',\mathbf{V}'+\mathbf{V}'').
\end{align*}
We aim to show that $\tilde{w}$ is the function which satisfies the properties (i)-(iii) in Lemma \ref{lm2.6}.

\quad The straightforward computations show that
\begin{align*}
	\nabla\cdot n=\nabla\cdot(n'+n'')=\nabla\cdot n'-\nabla\cdot\nabla\Delta^{-1}\nabla\cdot n'=0,
\end{align*}
and
\begin{align*}
	\nabla\cdot \mathbf{V}=\nabla \cdot \mathbf{V}'+\nabla\cdot\mathcal{R}[\mathbf{B}n-\nabla\cdot \mathbf{V}']=\nabla \cdot \mathbf{V}'+\mathbf{B}n-\nabla\cdot \mathbf{V}'=\mathbf{B}n.
\end{align*}
Hence $(n,\mathbf{V})$ satisfies the condition (i). Furthermore, we have
\begin{align*}
    n'&=\bar{n}h_0(\lambda\xi\cdot x)\phi+\lambda^{-6}\bar{n}\underset{\left|\beta\right|\geq 1,\left|\alpha+\beta\right|=6}{\Sigma}C_{\alpha,\beta}\partial_x^\alpha h_6(\lambda\xi\cdot x)\partial_x^\beta \phi(x)\notag\\
	&=\bar{n}h_0(\lambda\xi\cdot x)\phi+\lambda^{-6}\underset{\left|\beta\right|\geq 1,\left|\alpha+\beta\right|=6}{\Sigma}\lambda^{\left|\alpha\right|}C_{\alpha,\beta}h_{6-\left|\alpha\right|}(\lambda\xi\cdot x)\partial_x^\beta \phi(x),
\end{align*}
and
\begin{align*}
	\mathbf{V}'=\bar{\mathbf{V}}h_0(\lambda\xi\cdot x)\phi+\lambda^{-6}\bar{\mathbf{V}}\underset{\left|\beta\right|\geq 1,\left|\alpha+\beta\right|=6}{\Sigma}\lambda^{\left|\alpha\right|}C_{\alpha,\beta}\xi^{\alpha}h_{6-\left|\alpha\right|}(\lambda\xi\cdot x)\partial_x^\beta\phi(x).
\end{align*}
\quad Note that $\text{supp}\ (n',\mathbf{V}')\subset \text{supp}\ \phi$ and $\int_{\mathcal{O}}(n',\mathbf{V}')dx=0$. Thus,
\begin{align*}
	\|(n',\mathbf{V}')-(\bar{n},\bar{\mathbf{V}})h_0(\lambda\xi\cdot x)\phi\|_{\mathcal{L}^{\infty}}\leq C(\left|\bar{n}\right|,\left|\bar{\mathbf{V}}\right|,\phi,h_0)\lambda^{-1}.
\end{align*}
It follows from $\bar{n}\cdot \xi_1=\bar{n}\cdot \xi_2=0$ that
\begin{align*}
	n''=-\lambda^{-6}\nabla\Delta^{-1}\Delta^{3}\nabla\cdot[\bar{n}h_6(\lambda\xi\cdot x)\phi]=-\lambda^{-6}\nabla\Delta^2[(\bar{n}\cdot \nabla \phi)h_6(\lambda\xi\cdot x)].
\end{align*}
Hence, we have $\text{supp }n''\subset \text{supp }\phi$, $\int n''dx=0$ and
\begin{align*}
	\|n''\|_{L^{\infty}}\leq C(\left|\bar{n}\right|,\phi,h_0)\lambda^{-1}.
\end{align*}
\quad For $\mathbf{V}''$, since $\bar{\mathbf{V}}\cdot \xi=0$, direct computations give
\begin{align*}
    \mathbf{V}''&=-\mathcal{R}[\nabla\cdot \mathbf{V}'-\mathbf{B}n]\\
    &=-\lambda^{-6}\mathcal{R}\{\Delta^3[\nabla\cdot(\bar{\mathbf{V}}h_6(\lambda\xi\cdot x)\phi)]-\mathbf{B}\Delta^3[\bar{n}h_6(\lambda\xi\cdot x)\phi]+\mathbf{B}\nabla\Delta^2[(\nabla\phi\cdot\bar{n})h_6(\lambda\xi\cdot x)]\}\\
    &=-\lambda^{-6}\mathcal{R}\{\Delta^3[\bar{\mathbf{V}}\cdot \nabla\phi h_6(\lambda \xi\cdot x)]+\mathbf{B}\Delta^2[\nabla((\nabla\phi\cdot\bar{n})h_6(\lambda\xi\cdot x))-\Delta(\bar{n}h_6(\lambda\xi\cdot x)\phi)]\}.
\end{align*}
Therefore, it can be written as
\begin{align*}
	\mathbf{V}''=-\lambda^{-6}\mathcal{R}[\Delta^2f],
\end{align*}
where
\begin{align*}
	f=\Delta[\bar{\mathbf{V}}\cdot \nabla\phi h_6(\lambda \xi\cdot x)]+[\nabla((\nabla\phi\cdot\bar{n})h_6(\lambda\xi\cdot x))-\mathbf{B}\Delta(\bar{n}h_6(\lambda\xi\cdot x)\phi)].
\end{align*}
Then, we have $\text{supp } \mathbf{V}''\subset \text{supp } f\subset \text{supp } \phi$, and $\int_{\mathbb{R}^n}\mathbf{V}''dx=0$. Furthermore, 
\begin{align*}
\|\mathbf{V}''\|_{L^{\infty}(\mathcal{O})}=\lambda^{-6}\|\mathcal{R}[\Delta^2f]\|_{L^{\infty}}\leq C(\left|\bar{n}\right|,\left|\bar{\mathbf{V}}\right|,\phi,h_0)\lambda^{-1}.
\end{align*}
Thus,
\begin{align*}
	\|\tilde{w}(x)-\bar{w}h_0(\lambda\xi\cdot x)\phi(x)\|_{L^{\infty}}\leq C(\bar{w},\phi,h_0)\lambda^{-1}.
\end{align*}

\quad For $x\in \mathcal{O}$, since $w_2-w_1\in \Lambda$, one has $\bar{w}(h_0\phi)(x)\in[w_1,w_2]$, which implies that
\begin{align*}
	\text{dist}(w+\tilde{w},[w_1,w_2])&\leq \text{dist}(w+\bar{w}h_0\phi,[w_1,w_2])+\left|\tilde{w}-\bar{w}h_0(\lambda\xi\cdot x)\phi\right|\\
	&\leq C(\bar{w},\phi,h_0)\lambda^{-1}.
\end{align*}
We define the disjoint open sets $\mathcal{O}_i$ as
\begin{align*}
	\mathcal{O}_i=\left\{x\in\mathcal{O}:\left|w+\bar{w}h_0(\lambda\xi\cdot x)\phi(x)-w_i\right|<\min\left(\dfrac{\varepsilon}{2},\dfrac{\left|w_2-w_1\right|}{4}\right)\right\}.
\end{align*}
Hence, for $x\in\mathcal{O}_i$, one has
\begin{align*}
	\left|w+\tilde{w}(x)-w_i\right|&\leq\left|w+\bar{w}h_0(\lambda\xi\cdot x)\phi-w_i\right|+\left|\tilde{w}-\bar{w}h_0(\lambda\xi\cdot x)\phi\right|\\
	&\leq\dfrac{\varepsilon}{2}+C(\bar{w},\phi,h_0)\lambda^{-1}.
\end{align*}
Clearly, if $\lambda$ is large enough, properties (ii) and (iii) holds.
\end{proof}

\quad An exact plane-wave solutions to \eqref{eq2} with compact support is identically zero. By Lemma \ref{lm2.6}, we can construct localized plane-wave solutions by introducing an arbitrarily small error in the range of the wave. In order to ensure the wave-cone is large enough for constructing localized plane-wave solutions, we need next lemma to commit the consistency.

\begin{lemma}\label{lm2.7} There exists a constant $C>0$ such that for any $(\bar{n},\bar{\mathbf{V}})\in \Lambda$ defined in \eqref{eq7}, there exists a sequence $(\tilde{n}_k,\tilde{\mathbf{V}}_k)\in C_c^{\infty}$ solving $\mathcal{L}(\tilde{n}_k,\tilde{\mathbf{V}}_k)=0$ and satisfies:
\begin{enumerate}[(1)]
	\item [(i)] {$dist((\tilde{n}_k,\tilde{\mathbf{V}}_k),[-(\bar{n},\bar{\mathbf{V}}),(\bar{n},\bar{\mathbf{V}})])\rightarrow 0$ uniformly in $Q_1$ as $k\rightarrow \infty$;}
	\item [(ii)] {$(\tilde{n}_k,\tilde{\mathbf{V}}_k)\rightarrow 0$ as $k\rightarrow \infty$ in the sense of distribution;}
	\item [(iii)] {$\int_{Q_1}\left|(\tilde{n}_k,\tilde{\mathbf{V}}_k)\right|^2(x)dx\geq C\left|(\bar{n},\bar{\mathbf{V}})\right|^2$ for all $k\in \mathbb{N}$.}
\end{enumerate}
\end{lemma}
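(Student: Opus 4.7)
The plan is to obtain the required sequence by a single application of Lemma \ref{lm2.6} with a vanishing tolerance, and then to read property (ii) off the oscillatory structure that appears inside that construction. Given $(\bar n,\bar{\mathbf V})\in\Lambda$, set
\[w=0,\quad w_1=-(\bar n,\bar{\mathbf V}),\quad w_2=(\bar n,\bar{\mathbf V}),\quad \mu_1=\mu_2=\tfrac12,\]
so that $w=\mu_1w_1+\mu_2w_2$ and $w_2-w_1=2(\bar n,\bar{\mathbf V})\in\Lambda$, since the same $\eta$ that witnesses $(\bar n,\bar{\mathbf V})\in\Lambda$ also witnesses every scalar multiple. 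Apply Lemma \ref{lm2.6} with $\mathcal O=Q_1$ and a vanishing tolerance $\varepsilon_k\to 0$ to obtain $\tilde w_k=(\tilde n_k,\tilde{\mathbf V}_k)\in C_c^{\infty}(Q_1)$ with $\mathcal L(\tilde n_k,\tilde{\mathbf V}_k)=0$.

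Property (i) follows immediately from Lemma \ref{lm2.6}(ii): the distance from $\tilde w_k(x)$ to $[-(\bar n,\bar{\mathbf V}),(\bar n,\bar{\mathbf V})]$ is at most $\varepsilon_k$ for every $x\in Q_1$. For property (iii), Lemma \ref{lm2.6}(iii) supplies disjoint open sets $\mathcal O_1^k,\mathcal O_2^k\subset Q_1$ with $|\tilde w_k-w_i|<\varepsilon_k$ on $\mathcal O_i^k$ and $|\mathcal H^n(\mathcal O_i^k)-\tfrac12\mathcal H^n(Q_1)|<\varepsilon_k$, and hence
\[\int_{Q_1}|\tilde w_k|^2\,dx\ \geq\ \bigl(|(\bar n,\bar{\mathbf V})|-\varepsilon_k\bigr)^{2}\bigl(\mathcal H^n(Q_1)-2\varepsilon_k\bigr).\]
Choosing, for instance, $\varepsilon_k=\min\{1,|(\bar n,\bar{\mathbf V})|\}/(4k)$ (and truncating finitely many initial terms of the sequence if necessary) yields (iii) with a universal constant $C>0$ depending only on $\mathcal H^n(Q_1)$.

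Property (ii) is the only conclusion not directly packaged in Lemma \ref{lm2.6}, so I plan to extract it from the proof of that lemma. There, each $\tilde w_k$ has the shape
\[\tilde w_k(x)=(\bar n,\bar{\mathbf V})\,h_0(\lambda_k\xi\cdot x)\,\phi(x)+e_k(x),\quad \|e_k\|_{L^{\infty}}\leq C/\lambda_k,\]
where $\xi$ is the wave-cone vector for $(\bar n,\bar{\mathbf V})$, $\phi\in C_c^{\infty}(Q_1)$ is a fixed cutoff and $h_0$ is a smooth $1$-periodic function of mean zero; achieving tolerance $\varepsilon_k$ forces $\lambda_k\to\infty$. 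For any $\varphi\in C_c^{\infty}(\mathbb{R}^n)$, the Riemann–Lebesgue lemma (applied after slicing in the $\xi$-direction, using $\int_0^1 h_0=0$) drives $\int h_0(\lambda_k\xi\cdot x)\phi(x)\varphi(x)\,dx\to 0$, while $\int e_k\cdot\varphi\,dx=O(1/\lambda_k)$. Therefore $\tilde w_k\to 0$ in $\mathcal D'$.

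The main obstacle is precisely this last step: weak convergence is not among the conclusions of Lemma \ref{lm2.6}, so one must revisit the construction to expose the oscillation $h_0(\lambda_k\xi\cdot x)$ and the $L^{\infty}$ error bound $\|e_k\|_{L^{\infty}}\lesssim\lambda_k^{-1}$ explicitly, and then appeal to Riemann–Lebesgue. Once this is in hand, properties (i) and (iii) are routine bookkeeping from Lemma \ref{lm2.6}'s conclusions.
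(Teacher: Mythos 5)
Your approach diverges from the paper's in a substantive way. The paper applies Lemma \ref{lm2.6} on the small cube $Q_{2^{-k}}$ with $\varepsilon_k=2^{-nk}/k$, then tiles $Q_1$ with $2^{nk}$ translated copies of the resulting $v_k$. Property (ii) then comes out of Lemma \ref{lm2.6}'s \emph{stated} conclusions: each tile $a_j^{(k)}$ has zero mean and support of diameter $\lesssim 2^{-k}\to 0$, so for any test function $\varphi$ one has $\left|\int a_j^{(k)}\varphi\right| = \left|\int a_j^{(k)}\bigl(\varphi-\varphi(x^{(j,k)})\bigr)\right|\lesssim 2^{-k}\|\nabla\varphi\|_{L^\infty}\|a_j^{(k)}\|_{L^1}$, and summing over $j$ gives distributional convergence to zero. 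This argument never needs to look inside the construction of $v_k$. By contrast, you apply Lemma \ref{lm2.6} directly on $Q_1$ and then re-open its proof to expose the oscillatory profile $\bar w\, h_0(\lambda_k\xi\cdot x)\phi(x)$ plus an $O(\lambda_k^{-1})$ error, deriving (ii) from Riemann--Lebesgue. Both routes are viable in principle; the paper's is cleaner because it stays at the level of the lemma's interface.

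However, as written your argument has a real gap. You assert that ``$\phi\in C_c^\infty(Q_1)$ is a fixed cutoff and $h_0$ is a smooth $1$-periodic function of mean zero,'' but you are simultaneously sending $\varepsilon_k\to 0$ in the black-box application of Lemma \ref{lm2.6}. In the proof of that lemma the parameter $\delta$ controls both the cutoff (via $\mathcal H^n(\{\phi\neq 1\})<\delta$) and the mollified profile (via $\mathcal H^1(\{h\neq h_0\})<\delta$), and $\delta$ must shrink with $\varepsilon$ in order for the measure estimate $|\mathcal H^n(\mathcal O_i)-\mu_i\mathcal H^n(\mathcal O)|<\varepsilon$ to hold. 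So both $\phi=\phi_k$ and $h_0=h_0^{(k)}$ genuinely change with $k$, and the plain Riemann--Lebesgue step no longer applies as you state it (neither the integrand profile nor the slice weight is fixed). There are two repairs: (a) fix a single tolerance $\varepsilon_0\leq\tfrac14\min\{1,|(\bar n,\bar{\mathbf V})|\}$, hence a single $\phi$ and $h_0$, and let only $\lambda_k\to\infty$ --- the internal bound $\mathrm{dist}(\tilde w_k,[-\bar w,\bar w])\leq C(\bar w,\phi,h_0)\lambda_k^{-1}$ still delivers (i), (iii) follows from the fixed measure estimate, and (ii) is now honest Riemann--Lebesgue; or (b) keep $\varepsilon_k\to 0$ but note that $\phi_k\to \mathbf 1_{Q_1}$ and $h_0^{(k)}\to h$ in $L^1$, so $\int h_0^{(k)}(\lambda_k\xi\cdot x)\phi_k(x)\varphi(x)\,dx$ differs from $\int h(\lambda_k\xi\cdot x)\mathbf 1_{Q_1}(x)\varphi(x)\,dx$ by $O(\delta_k)$, after which Riemann--Lebesgue applies to the fixed $h$ and $\mathbf 1_{Q_1}\varphi$. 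Either repair works, but as stated the step is not justified. Also, a minor slip: with your choice $w_2-w_1=2(\bar n,\bar{\mathbf V})$, the oscillatory coefficient is $2(\bar n,\bar{\mathbf V})$, not $(\bar n,\bar{\mathbf V})$, which is harmless since $h_0$ ranges in $[-\tfrac12,\tfrac12]$.
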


\begin{proof}
Let $\bar{w}=(\bar{n},\bar{\mathbf{V}})\in\Lambda$ and consider the segment $[-\bar{w},\bar{w}]$. For $k\in \mathbb{N}$, set $w=0=\frac{1}{2}(-\bar{w})+\frac{1}{2}\bar{w}$, $\varepsilon_k=\frac{2^{-nk}}{k}$, and $\mathcal{O}=Q_{2^{-k}}$. Then by Lemma \ref{lm2.6}, there exists $v_k\in C_c^{\infty}(Q_{2^{-k}})$ satisfying $\int v_k(x)dx=0$, $\mathcal{L}v_k=0$, and
\begin{align*}
	\text{dist}(v_k,[-\bar{w},\bar{w}])<\dfrac{2^{-nk}}{k}.
\end{align*}
Furthermore, there exist $\mathcal{O}_i^{(k)}$ such that
\begin{align*}
	\left|v_i(x)-(-1)^i\bar{w}\right|<\dfrac{2^{-nk}}{k}\ \text{for}\ x\in\mathcal{O}_i^{(k)}\quad\text{and}\quad\left|\mathcal{H}^n(\mathcal{O}_i^{(k)})-\dfrac{1}{2^{nk}+1}\right|<\dfrac{2^{-nk}}{k}.
\end{align*}
\quad Next, consider the set of disjoint cubes $\{Q_{2^{-k}}(x^{(j,k)})\}_{j=1}^{2^{nk}}$ which satisfying $Q_1=\cup Q_{2^{-k}}(x^{(j,k)})$. Denote
\begin{align*}
	\tilde{w}_k=\mathop{\Sigma}\limits_{j=1}\limits^{2^{nk}}a_j^{(k)}\quad\text{where}\quad a_j^{(k)}(x)=v_k(x-x^{(j,k)})\in\mathcal{C}_c^{\infty}(Q_{2^{-k}}(x^{(j,k)})).
\end{align*}
Then, $\mathcal{L}\tilde{w}_k=0$, and $\text{dist}(\tilde{w}_k,[-\bar{w},\bar{w}])\rightarrow 0$ as $k\rightarrow \infty$.

\quad Since $a_j^{(k)}(x)\in \mathcal{L}(\mathbb{R}^n)$ and $\text{diam}(\text{supp}\  a_j^{(k)})\leq \sqrt{n}2^{-k}\rightarrow 0$, then $\tilde{w}_k\rightarrow 0$. Note that
\begin{align*}
	\int_{Q_1}\left|\tilde{w}_k\right|^2=\mathop{\Sigma}\limits_{j=1}\limits^{2^{nk}}\int\left|a_j^{(k)}\right|^2=2^{nk}\int_{2^{-k}}\left|v_k\right|^2dx.
\end{align*}
Therefore, one has
\begin{align*}
	2^{nk}\int_{Q_{2^{-k}}}\left|v_k\right|^2dx-\left|\bar{w}\right|^2&=\dfrac{1}{\left|Q_{2^{-k}}\right|}\int_{Q_{2^{-k}}}\left|v_k\right|^2-\left|\bar{w}\right|^2dx\\
	&=\dfrac{1}{\left|Q_{2^{-k}}\right|}\left(\mathop{\Sigma}\limits_{i=1}\limits^{2}\int_{\mathcal{O}_i}\left|v_k\right|^2-\left|\bar{w}\right|^2dx+\int_{Q_{2^{-k}}\backslash(\mathcal{O}_1\cup\mathcal{O}_2)}\left|v_k\right|^2-\left|\bar{w}\right|^2dx\right)\\
	&\leq C\dfrac{1}{k}.
\end{align*}
Hence, for sufficiently large $k$, one has
\begin{align*}
	\int_{Q_1}\left|\tilde{w_k}\right|^2dx\geq\dfrac{1}{2}\left|\bar{w}\right|^2.
\end{align*}
This finishes the proof of the lemma.
\end{proof}

\section{Convex Integration}
\quad In this section, we will use the convex integration method to construct weak solutions, where the iteration scheme follows \cite{MR3459023} and \cite{MR3505175}.

\begin{lemma}\label{lm3.1} Let $n\geq 3$, for every $\bar{w}=(\bar{m},\bar{\mathbf{U}})\in\mathcal{U}_{\rho,q}:=\mathring{K}_{\rho,q}^{\text{co}}$, there exists a subsolution $w=(m,\mathbf{U})\in C_{c}^{\infty}$ such that
	\begin{enumerate}[(1)]
		\item [(i)] {$\bar{w}+w(x)\in\mathcal{\mathbf{U}}_{\rho,q}$ for all $x\in Q_1$;}
		\item [(ii)] {$\int_{Q_1}\left|w(x)\right|^2dx\geq\Phi(\text{dist}(\bar{w},K_{\rho,q}))$.}
	\end{enumerate}
\end{lemma}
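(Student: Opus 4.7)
The plan is to combine the admissible-segment construction of Lemma~\ref{lm2.3} with the localized plane-wave sequence of Lemma~\ref{lm2.7}, and then select a sufficiently oscillatory member of the sequence as the perturbation $w$. First, applying Lemma~\ref{lm2.3} to $\bar w = (\bar m, \bar{\mathbf U}) \in \mathcal U_{\rho,q}$ yields a $\Lambda$-direction $(n^*, \mathbf V^*)$ such that the whole admissible segment $\sigma := [\bar w - (n^*, \mathbf V^*),\, \bar w + (n^*, \mathbf V^*)]$ lies in $\mathring K_{\rho,q}^{\text{co}}$, with the quantitative bound $|n^*| \geq \tfrac{C}{\sqrt{n\rho q}}(n\rho q - |\bar m|^2)$. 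Feeding $(n^*, \mathbf V^*)$ into Lemma~\ref{lm2.7} then produces a sequence $\{(\tilde n_k, \tilde{\mathbf V}_k)\} \subset C_c^\infty(Q_1)$ with $\mathcal L(\tilde n_k,\tilde{\mathbf V}_k)=0$ whose pointwise values approximate the segment $[-(n^*,\mathbf V^*),(n^*,\mathbf V^*)]$ uniformly and satisfy the $L^2$ lower bound $\int_{Q_1} |(\tilde n_k, \tilde{\mathbf V}_k)|^2\, dx \geq C|n^*|^2$.

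Since $\sigma$ is a compact subset of the open set $\mathcal U_{\rho,q}$, I would pick $\delta > 0$ so that its $\delta$-neighborhood remains in $\mathcal U_{\rho,q}$. By item~(i) of Lemma~\ref{lm2.7}, for $k$ large enough $\bar w + (\tilde n_k, \tilde{\mathbf V}_k)(x)$ lies in this neighborhood for every $x \in Q_1$, so taking $w := (\tilde n_k, \tilde{\mathbf V}_k)$ for any such $k$ immediately gives property~(i). Chaining the two lower bounds produces
\[
\int_{Q_1} |w(x)|^2\, dx \ \geq \ \frac{C}{n\rho q}\bigl(n\rho q - |\bar m|^2\bigr)^2,
\]
which is strictly positive on $\mathcal U_{\rho,q}$, since the strict inequality $e(\rho,\bar m,\bar{\mathbf U}) < nq/2$ combined with Lemma~\ref{lm2.1}(ii) forces $|\bar m|^2 < n\rho q$. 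To package this as property~(ii), I would use that by Lemma~\ref{lm2.1}(iv) every $(m^*,\mathbf U^*) \in K_{\rho,q}$ satisfies $|m^*|^2 = n\rho q$, so any sequence $\bar w_j \in \mathcal U_{\rho,q}$ with $\mathrm{dist}(\bar w_j, K_{\rho,q}) \to 0$ must satisfy $|\bar m_j|^2 \to n\rho q$. Consequently, one may define
\[
\Phi(d) \ := \ \inf\Bigl\{ \tfrac{C}{n\rho q}(n\rho q - |\bar m|^2)^2 \, : \, \bar w \in \mathcal U_{\rho,q},\ \mathrm{dist}(\bar w, K_{\rho,q}) \geq d \Bigr\},
\]
which is nondecreasing and strictly positive for $d > 0$, yielding (ii).

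The two analytical ingredients---the admissible segment and the localized plane wave---are already supplied by Lemmas~\ref{lm2.3} and~\ref{lm2.7}, so the combinatorial core is straightforward. The only genuinely subtle point is the last step: quantifying the relationship between the Euclidean distance $\mathrm{dist}(\bar w, K_{\rho,q})$ and the algebraic quantity $n\rho q - |\bar m|^2$. The crucial structural fact, drawn from Lemma~\ref{lm2.1}(ii)--(iv), is that inside $\mathcal U_{\rho,q}$ the energy constraint $e < nq/2$ couples $\bar m$ and $\bar{\mathbf U}$ tightly, so that proximity to $K_{\rho,q}$ forces $|\bar m|^2$ close to $n\rho q$; this is precisely what rules out the pathological scenario of $\bar w$ having $|\bar m|^2 \approx n\rho q$ while being far from $K_{\rho,q}$. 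Tracking the Carathéodory constant $N$ hidden in the proof of Lemma~\ref{lm2.3} is the only explicit quantitative bookkeeping required.
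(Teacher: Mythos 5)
Your proposal is correct and follows essentially the same route as the paper's proof: combine the quantitative admissible-segment bound of Lemma~\ref{lm2.3} with an $L^2$-large localized plane-wave perturbation and then chain the two estimates. The only differences are cosmetic---you invoke Lemma~\ref{lm2.7} where the paper cites Lemma~\ref{lm2.6} (Lemma~\ref{lm2.7} is built from Lemma~\ref{lm2.6} and delivers the $L^2$ lower bound more directly), and you spell out the definition of $\Phi$ together with the compactness argument (using the equality case of Lemma~\ref{lm2.1}(ii) at $|m|^2 = n\rho q$) showing $\Phi$ is nondecreasing and positive, a step the paper's proof leaves implicit.
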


\begin{proof}
	Let $\bar{w}\in\mathcal{\mathbf{U}}_{\rho,q}=\mathring{K}_{\rho,q}^{\text{co}}$. It follows from Lemmas \ref{lm2.3} and \ref{lm2.4}, that there exists a $\tilde{w}\in\Lambda$ satisfying
	\begin{align*}
		[(\bar{m}-\tilde{m},\bar{\mathbf{U}}-\tilde{\mathbf{U}}),(\bar{m}+\tilde{m},\bar{\mathbf{U}}+\tilde{\mathbf{U}})]\in\mathring{K}_{\rho,q}^{\text{co}},\quad\text{dist}(\sigma_{\bar{m}},\partial K_{\rho,q}^{\text{co}})\geq\frac{1}{2}\text{dist}(\bar{w},\partial K_{\rho,q}^{\text{co}}),
	\end{align*}
	and 
	\begin{align*}
		\left|\tilde{m}\right|\geq\frac{C}{\sqrt{n\rho q}}(n\rho q-\left|\bar{m}\right|^2).
	\end{align*}
	By Lemma \ref{lm2.6}, for $\varepsilon<\frac{1}{4}\text{dist}(\bar{w},\partial K_{\rho,q}^\text{co})$, we can find a subsolution $w=(m,\mathbf{U})\in C_c^{\infty}$ such that $\bar{w}+w\in\mathcal{\mathbf{U}}_{\rho,q}$ for all $x\in Q_1$ and
	\begin{align*}
		\int_{Q_1}\left|m(x)\right|^2dx\geq\frac{1}{2}\left|\tilde{m}\right|^2\geq\frac{C}{\sqrt{n\rho q}}(n\rho q-\left|\bar{m}\right|^2)^2.
	\end{align*}
\end{proof}

\quad Lemma \ref{lm3.1} allows us to find suitable line segment for certain constrain set. Since the constrain sets rely on $x$, we need the following lemma, which gives more accurate estimates.

\begin{lemma}\label{lm3.2}
	Suppose that $\tilde{K}\subset K_{\bar{\rho},\bar{q}}$, and $\bar{w}\in \mathring{\tilde{K}}^{co}$ is a constant vector. Given a bounded open set $\mathcal{\mathbf{U}}$ and $\varepsilon>0$, there exists a $\tilde{w} \in C_c^{\infty}(\mathcal{\mathbf{U}})$ such that the following statements holds:
	\begin{enumerate}[(1)]
		\item [(i)] {$\int \tilde{w}(x)dx=0\ \text{and}\ \mathcal{L}\tilde{w}=0$;}
		\item [(ii)] {there is a constant $\gamma=\gamma(\varepsilon,\bar{w}.\tilde{K})$ such that $Q_{\gamma}+\bar{w}+\tilde{w}(x)\in \mathring{\tilde{K}}^{co}$;}
		\item [(iii)] {$\frac{1}{\left|\mathcal{\mathbf{U}}\right|}\int_{\mathcal{\mathbf{U}}}dist(\bar{w}+\tilde{w}(x),\tilde{K})dx<\varepsilon$.}
	\end{enumerate}
\end{lemma}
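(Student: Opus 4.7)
The plan is to construct $\tilde w$ by iterating the localized plane-wave construction of Lemma~\ref{lm2.6}, guided at each step by a Carathéodory representation of the current centre and by the admissible segment provided by Lemma~\ref{lm2.3}. The first application pushes $\bar w$ close to a pair of points $\bar w \pm \bar v \in \mathring{\tilde K}^{\mathrm{co}}$; subsequent applications, localised inside the subsets on which the value is already close to one of these points, push further toward $\tilde K$. A careful choice of error tolerances and a finite stopping time should yield both the average closeness in (iii) and a uniform interior margin in (ii).

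\emph{Initialisation.} Since $\bar w \in \mathring{\tilde K}^{\mathrm{co}}$, Carathéodory's theorem gives a representation $\bar w = \sum_{i=1}^{N+1}\lambda_i z_i$ with $z_i \in \tilde K$, $\lambda_i>0$, $\sum\lambda_i=1$. For $n\geq 3$, Lemma~\ref{lm2.4} ensures every difference $z_i-z_j$ lies in $\Lambda$. Applying Lemma~\ref{lm2.3} to $\bar w$, I would select an admissible segment $[\bar w-\bar v,\bar w+\bar v]\subset \mathring{\tilde K}^{\mathrm{co}}$ parallel to some $z_j-z_1\in\Lambda$, with $|\bar v|$ bounded below in terms of the distance from $\bar w$ to $K_{\bar\rho,\bar q}$.

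\emph{Localised plane-wave and iteration.} Apply Lemma~\ref{lm2.6} on $\mathcal{U}$ with $w=\bar w$, $w_1=\bar w-\bar v$, $w_2=\bar w+\bar v$, $\mu_1=\mu_2=1/2$, and a small error $\varepsilon_1$, producing a smooth compactly supported $\tilde w_1$ satisfying $\mathcal{L}\tilde w_1=0$, $\int\tilde w_1=0$, together with disjoint sets $\mathcal{O}^{(1)}_\pm\subset\mathcal{U}$ whose union has measure at least $(1-\varepsilon_1)|\mathcal{U}|$, on which $\bar w+\tilde w_1(x)$ is $\varepsilon_1$-close to $\bar w\pm\bar v$. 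I would then iterate inside each $\mathcal{O}^{(1)}_\pm$: take the new base point to be $\bar w\pm\bar v$ (which admits its own Carathéodory decomposition in $\tilde K$), use a smooth cut-off subordinate to $\mathcal{O}^{(1)}_\pm$, and produce $\tilde w_2^\pm\in C_c^\infty(\mathcal{O}^{(1)}_\pm)$ with $\mathcal{L}\tilde w_2^\pm=0$ and zero integral. By linearity, summing preserves (i). After $M$ such steps, with errors $\varepsilon_k$ small and decreasing, the total perturbation $\tilde w=\sum_k\tilde w_k\in C_c^\infty(\mathcal{U})$ satisfies: on all but a set of measure at most $\eta|\mathcal{U}|$, the value $\bar w+\tilde w(x)$ lies within $\eta$ of some point of $\tilde K$, where $\eta=\eta(\varepsilon,\bar w,\tilde K)$ can be made arbitrarily small by choosing $M$ large and the $\varepsilon_k$ small.

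\emph{Verifying the conclusions.} For (iii), split the integral: on the good set the integrand is at most $\eta$, and on its complement of measure at most $\eta|\mathcal{U}|$ it is bounded by $\mathrm{diam}(\tilde K^{\mathrm{co}})$, so for $\eta$ small enough the average is below $\varepsilon$. For (ii), at every iteration the admissible segment sits strictly inside $\mathring{\tilde K}^{\mathrm{co}}$ with a quantifiable margin (from Lemma~\ref{lm2.3}), and Lemma~\ref{lm2.6} makes each $\tilde w_k$ uniformly close to that segment in sup norm; if $\varepsilon_k$ is taken smaller than the current margin, the finite sum still lies in $\mathring{\tilde K}^{\mathrm{co}}$ with a uniform margin $\gamma>0$ depending only on $\varepsilon$, $\bar w$ and $\tilde K$. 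Condition (i) is preserved throughout because each $\tilde w_k$ satisfies $\mathcal{L}\tilde w_k=0$ with zero mean and compact support, and these properties are closed under finite sums.

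\emph{Main obstacle.} The delicate point is the bookkeeping. Pushing the average distance to $\tilde K$ below $\varepsilon$ requires many iterations, yet each iteration can erode the available interior margin to $\partial \tilde K^{\mathrm{co}}$. Because the lower bound on $|\bar v|$ in Lemma~\ref{lm2.3} degenerates as the centre approaches $\tilde K$, extra care is needed to keep the number of iterations finite and to tune the $\varepsilon_k$ so that (ii) and (iii) hold simultaneously, with a final $\gamma$ that depends only on the data $\varepsilon$, $\bar w$, $\tilde K$ and not on the intermediate construction.
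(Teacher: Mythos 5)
Your iteration skeleton (Carathéodory decomposition + Lemma~\ref{lm2.3} to pick a $\Lambda$-segment + Lemma~\ref{lm2.6} to produce a localized plane wave + recurse on the subregions) is the right general shape, and you correctly identify the danger: without control, the depth of recursion and the erosion of the interior margin are in tension, and nothing in what you wrote actually resolves it. That unresolved tension is precisely the content of the lemma, so the proposal as stated has a genuine gap rather than a finished proof.

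The paper's resolution rests on two ideas that are absent from your sketch. First, a \emph{shrinking step}: before iterating, replace the Carathéodory vertices $w_i\in\tilde K$ by $w_i^{(\delta)}=\bar w+(1-\delta)(w_i-\bar w)$, i.e.\ replace $\tilde K$ by the shrunk set $L_\delta$, with $\delta$ chosen so that $\delta\max_i|w_i-\bar w|<\varepsilon/4$. This costs a fixed, controllable amount ($<\varepsilon/4$) in $\mathrm{dist}(\cdot,\tilde K)$, and, more importantly, it creates a definite gap between $L_\delta^{\mathrm{co}}$ and $\partial\tilde K^{\mathrm{co}}$, so the margin in (ii) never has to be squeezed indefinitely. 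Your scheme tries to iterate directly toward $\tilde K$, where the lower bound of Lemma~\ref{lm2.3} degenerates and the margin disappears; the paper never does this. Second, a \emph{finite termination argument}: the paper builds the hierarchy $L^{(0)}=L_\delta$, $L^{(j+1)}=L^{(j)}\cup\{\Lambda\text{-convex combinations of pairs in }L^{(j)}\}$, and shows by a Carathéodory-type count that $L^{(N_n)}=L_\delta^{\mathrm{co}}$ for a fixed dimensional constant $N_n$. Since $\bar w\in L_\delta^{\mathrm{co}}$, the recursion has depth at most $N_n$, independent of $\varepsilon$. The induction then carries explicit quantifiers $\tau_j=2^{-j}$: at level $j$ one lands in $\mathring L_{\tau_j\delta}^{\mathrm{co}}$ with average distance $<(1-\tau_j)\varepsilon$, and because $j\le N_n$ the final margin $\gamma\sim\tau_{N_n}\delta$ is a positive number depending only on $\varepsilon,\bar w,\tilde K$. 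By contrast, your statement that the recursion depth $M$ can be ``taken large'' to make $\eta$ arbitrarily small is false without the shrinking trick: a finite number of plane-wave splittings cannot drive the value all the way onto $\tilde K$, and an infinite number destroys the uniform margin needed for (ii). The missing ideas are the $\delta$-shrinkage and the a priori bound $M=N_n$ on the lamination depth.
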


\begin{proof}
	For $\bar{w}\in \mathring{\tilde{K}}^{co}$, there exists a finite set $\{w_i\}_{i=1}^{N}\subset \tilde{K}$ such that $\bar{w}\in \text{int}(\{w_i\}_{i=1}^{N})^{\text{co}}$. For $\beta\in(0,1)$, define
	\begin{align*}
		L_{\beta}=\{w_i^{(\beta)}=\bar{w}+(1-\beta)(w_i-\bar{w})\}_{i=1}^{N}.
	\end{align*}
	Then $\bar{w}\in \mathring{L}_{\beta}^{\text{co}}$ for any $\beta\in(0,1)$. For $0<\beta_2<\beta_1<1$, one has
	\begin{align*}
		\overline{L_{\beta_1}^{\text{co}}}\subset \mathring{L}_{\beta_2}^{\text{co}}\subset \mathring{\tilde{K}}^{\text{co}}.
	\end{align*}
	
	We choose $\delta$ such that $\delta\underset{i}{\max}\left|w_i-\bar{w}\right|<\frac{\varepsilon}{4}$.
	
	\quad Let $L^{(0)}=L_{\delta}$, and
	\begin{align*}
		L^{j+1}:=L^{(j)}\cup\left\{\upsilon_1w_1'+\upsilon_2w_2':w_i'\in L^{(j)},w_2'-w_1'\in\Lambda,\upsilon_i\in(0,1),\upsilon_1+\upsilon_2=1\right\}.
	\end{align*}
	Then, for $j\geq 0$, we have $L^{(j)}\subset L_{\delta}^{\text{co}}$. We claim further that, for $j\geq 0$,
	\begin{align}\label{eq8}
		\left\{\mathop{\Sigma}\limits_{i=1}\limits^{j+1}\upsilon_iw_i'|w_i'\in L^{(0)}	,\upsilon_i\geq 0,\mathop{\Sigma}\limits_{i=1}\limits^{j+1}\upsilon_i=1\right\}\subset L^{(j)}.
	\end{align}
    \quad First, \eqref{eq8} is trivially true for $j=0$. Suppose that \eqref{eq8} holds for $0\leq j\leq k$. For $j=k+1$, let $w=\mathop{\Sigma}\limits_{i=1}\limits^{k+2}\upsilon_iw_i'$ for $w_i'\in L^{(0)}$, $\upsilon_i>0$, and $\mathop{\Sigma}\limits_{i=1}\limits^{k+2}\upsilon_i=1$, we can rewrite $w$ as
	\begin{align*}
		w=\dfrac{\upsilon_1}{\upsilon_1+\upsilon_2}w'+\dfrac{\upsilon_2}{\upsilon_1+\upsilon_2}w'',
	\end{align*}
	where
	\begin{align*}
		w'=(\upsilon_1+\upsilon_2)w_1'+\mathop{\Sigma}\limits_{i=3}\limits^{k+2}\upsilon_iw_i'\ \text{and}\ w''=(\upsilon_1+\upsilon_2)w_2'+\mathop{\Sigma}\limits_{i=3}\limits^{k+2}\upsilon_iw_i'.
	\end{align*}
	Then, $w',w''\in L^{(k)}$, and one has
	\begin{align*}
		w'-w''=(\upsilon_1+\upsilon_2)(w_1'-w_2').
	\end{align*}
	Since $w_i'\in L^{(0)}$, then there exists $w_{j_i}\in \tilde{K}$ such that $w_i'=w_{j_i}^{(\delta)}$. Hence
	\begin{align*}
		w_1'-w_2'=w_{j_1}^{(\delta)}-w_{j_2}^{(\delta)}=(1-\delta)(w_{j_1}-w_{j_2}).
	\end{align*}
	Since $w_{j_1},w_{j_2}\in \tilde{K}$, then $w_{j_1}-w_{j_2}\in \Lambda$. Hence, $w'-w''\in \Lambda$, that is, $w\in L^{(k+1)}$. Then the claim \eqref{eq8} follows.
	
	\quad Then, we conclude that $L^{(N_n)}=L_{\delta}^{co}$, where $N_n=\dfrac{n(n-1)}{2}-1$. Let $\tau_j=2^{-j}$ for $j=1,2,...,N_n$. We claim that for any $w\in L^{(j)}$ and open set $\mathcal{O}\subset \mathcal{U}$, there exists a $\tilde{w}\in \mathcal{C}_c^{\infty}(\mathcal{O})$ satisfying $\mathcal{L}\tilde{w}=0$ and
	\begin{align*}
		w+\tilde{w}\subset \mathring{L}_{\tau_j\delta}^{\text{co}},\quad \dfrac{1}{\left|\mathcal{O}\right|}\int_{\mathcal{O}}\text{dist}(w+\tilde{w}(x),\tilde{K})dx<(1-\tau_j)\varepsilon.
	\end{align*}
	We prove this claim by induction. For $j=0$, if $w\in L^{(0)}=L_{\delta}$, then $w=w_i^{(\delta)}$ for some $i$. Then, $w\in L_{\delta}\subset \mathring{L}_{\tau_0\delta}^{co}$ and $\mathcal{L}\tilde{w}=0$ holds for $\tilde{w}=0$. Furthermore, we have
	\begin{align*}
		\text{dist}(w,\tilde{K})\leq\left|w_i-w_i^{\delta}\right|\leq\underset{j}{\max}\left|w_j-w_j^{\delta}\right|\leq\delta\underset{j}{\max}\left|w_j-\bar{w}\right|<\dfrac{\varepsilon}{4}<(1-\tau_0)\varepsilon.
	\end{align*}
	\quad Assume that the claim holds for $0\leq j\leq k$. For the case $j=k+1$, if $w\in L^{(k+1)}\backslash L^{(k)}$, then there exist $w_1',w_2'\in L^{(k)}$ such that
	\begin{align*}
		w=\upsilon_1w_1'+\upsilon_2w_2',\quad w_2'-w_1'\in\Lambda,\quad \upsilon_i\in(0,1),\quad \upsilon_1+\upsilon_2=1.
	\end{align*}
	Then, for a given open subset $\mathcal{O}\subset\mathcal{U}$ and $\varepsilon_0$, by Lemma \ref{lm2.6}, there exists a $\tilde{w}_0\in C_c^{\infty}(\mathcal{O})$ satisfying $\mathcal{L}\tilde{w}=0$ and
	\begin{align*}
		\text{dist}(w+\tilde{w}_0,[w_1',w_2'])<\varepsilon_0,
	\end{align*}
	and exist two disjoint open subsets $\mathcal{O}_1,\mathcal{O}_2\subset \mathcal{O}$ satisfying
	\begin{align*}
		\left|w+\tilde{w}_0-w_i'\right|<\upsilon_0\quad \text{in}\quad  \mathcal{O}_i,\quad \left|\mathcal{H}^n(\mathcal{O}_i)-\upsilon_i\mathcal{H}^n(\mathcal{O})\right|<\varepsilon_0\quad \text{for}\quad i=1,2.
	\end{align*}
    Let $\tilde{w}=\tilde{w}_0+\tilde{w}_1+\tilde{w}_2$, then $\tilde{w}\in \mathcal{C}_c^{\infty}$ and satisfies $\mathcal{L}\tilde{w}=0$. Furthermore, for $x\in \mathcal{O}\backslash(\mathcal{O}_1\cup\mathcal{O}_2)$, one has
	\begin{align*}
		w+\tilde{w}(x)=w+w\tilde{w}_0(x)\in Q_{\varepsilon_0}+[w_1',w_2']\subset Q_{\varepsilon_0}+L_{\tau_k\delta}^{\text{co}}.
	\end{align*}
	\quad For $x\in\mathcal{O}_i$($i=1,2$), it holds that
	\begin{align*}
		w+\tilde{w}(x)=(w+\tilde{w}_0(x)-w_i')+(w_i'+\tilde{w}_i(x))\in Q_{\varepsilon_0}+L_{\tau_k\delta}^{\text{co}}.
	\end{align*}
	Since $\overline{L_{\tau_k\delta}^{\text{co}}}\subset \mathring{L}_{\tau_{k+1}\delta}^{\text{co}}$, then for $\epsilon$ sufficiently small, we have
	\begin{align*}
		Q_{2\varepsilon_0}+L_{\tau_k\delta}^{\text{co}}\subset \mathring{L}_{\tau_{k+1}\delta}^{\text{co}}.
	\end{align*}
	Then, $w+\tilde{w}\subset \mathring{L}_{\tau_{k+1}\delta}^{\text{co}}$.
	
	\quad For $x\in \mathcal{O}_i$, $w+\tilde{w}(x)=w+\tilde{w}_0(x)+\tilde{w}_i(x)$. Directly computations show that
	\begin{align*}
		dist(w+\tilde{w}(x),\tilde{K})&\leq\left|w+\tilde{w}_0(x)-w_i'\right|+dist(w_i'+\tilde{w}_i(x),\tilde{K})\\
		&\leq\varepsilon_0+\text{dist}(w_i'+\tilde{w}_i'(x),\tilde{K}).
	\end{align*}
	Since $\mathcal{O}_1$ and $\mathcal{O}_2$ are disjoint open subsets of $\mathcal{O}$, one has
	\begin{align*}
		\mathcal{H}^{n}(\mathcal{O}\backslash(\mathcal{O}_1\cup\mathcal{O}_2))&=\mathcal{H}^{n}(\mathcal{O})-\mathop{\Sigma}\limits_{i=1}\limits^{2}\mathcal{H}^{n}(\mathcal{O}_i)\\
		&\leq\mathop{\Sigma}\limits_{i=1}\limits^{2}\left|\upsilon_i\mathcal{H}^{n}(\mathcal{O})-\mathcal{H}^{n}(\mathcal{O}_i)\right|<2\varepsilon_0.
	\end{align*}
	Note that $\tilde K$ is bounded, $\text{dist}(w+\tilde{w}(x),\tilde{K})\leq 2M(\bar{\rho},\bar{q})<\infty$, it follows that
	\begin{align*}
		\int_{\mathcal{O}}\text{dist}(w+\tilde{w}(x),\tilde{K})dx&\leq\mathop{\Sigma}\limits_{i=1}\limits^{2}\int_{\mathcal{O}_i}\text{dist}(w+\tilde{w}(x),\tilde{K})dx+2M(\bar{\rho},\bar{q})\left|\mathcal{O}\backslash(\mathcal{O}_1\cup\mathcal{O_2})\right|\\
		&\leq\mathop{\Sigma}\limits_{i=1}\limits^{2}\int_{\mathcal{O}_i}\int\varepsilon_0+\text{dist}(w_i'+\tilde{w}_i(x),\tilde{K})dx+4M(\bar{\rho},\bar{q})\varepsilon_0\\
		&\leq(\left|\mathcal{O}_1\right|+\left|\mathcal{O}_2\right|)(1-\tau_k)\varepsilon+C\varepsilon_0.
	\end{align*}
    Hence
	\begin{align*}
		\dfrac{1}{\left|\mathcal{O}\right|}\int_{\mathcal{O}}\text{dist}(w+\tilde{w}(x),\tilde{K})dx\leq(1-\tau_k)\varepsilon+\dfrac{C\epsilon_0}{\left|\mathcal{O}\right|}<(1-\tau_{k+1})\varepsilon,
	\end{align*}
	where $\tau_{k+1}<\tau_k$ has been used, which proves the claim. 
	
	\quad Therefore, for $\bar{w}\in L_{\delta}^{\text{co}}$, there exists a $\tilde{w}\in \mathcal{C}_c^{\infty}(\mathcal{U})$ satisfying $\mathcal{L}\tilde{w}=0$ and
	\begin{align*}
		\bar{w}+\tilde{w}\subset \mathring{L}_{\tau_{N_n}\delta}^{\text{co}},\quad \dfrac{1}{\left|\mathcal{U}\right|}\int_{\mathcal{U}}\text{dist}(\bar{w}+\tilde{w}(x),\tilde{K})dx<(1-\tau_{N_n})\varepsilon.
	\end{align*}
	Hence, one can choose $\gamma>0$ sufficiently small such that
	\begin{align*}
		Q_{\gamma}+\bar{w}+\tilde{w}\subset Q_{\gamma}+\text{int} L_{\tau_{N_n}\delta}^{\text{co}}\subset \text{int} \tilde{K}^{\text{co}}.
	\end{align*}
	This finishes the proof of the lemma.
\end{proof}

\quad As mentioned above, our definition to the strict subsolutions demand the map $x\mapsto K_x$ is continuous in a open set $\mathcal{D}$, the following two lemmas ensure the inclusion relation fix along the variation of $x$ and will help us deal with the variation of the constrain set $K_x$.

\begin{lemma}\label{lm3.3}
    Assume that $K_x\subset \mathbb{R}^n$, $x\mapsto K_x$ is continuous in an open set $\mathcal{D}$ in the Hausdorff distance. Then for any compact set $C\subset\mathring{K}_x^{\text{co}}$, there exists $\varepsilon>0$ such that for any $x'\subset \mathcal{D}$ with $d_{\mathcal{H}}(x,x')<\varepsilon$,
    \begin{align*}
        C\subset\mathring{K}_{x'}^{\text{co}}.
    \end{align*}
\end{lemma}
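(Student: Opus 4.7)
The plan is to reduce the statement to a purely geometric fact about two compact convex sets that are close in Hausdorff distance, and then assemble that fact with compactness of $C$ and the Hausdorff continuity of the convex hull operation.

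The first step is to quantify the interior containment of $C$. Since $C$ is compact and $\mathring{K}_x^{\mathrm{co}}$ is open, $3r := \mathrm{dist}(C,\mathbb{R}^n\setminus K_x^{\mathrm{co}}) > 0$, so that $B_{3r}(c) \subset K_x^{\mathrm{co}}$ for every $c \in C$. Next, I would record that the convex hull operation is $1$-Lipschitz in the Hausdorff metric on bounded sets: given $a = \sum \lambda_i a_i \in \mathrm{co}(A)$, picking $b_i \in B$ with $|a_i - b_i| \le d_{\mathcal{H}}(A,B)$ yields $b = \sum \lambda_i b_i \in \mathrm{co}(B)$ with $|a-b|\le d_{\mathcal{H}}(A,B)$, and symmetrically. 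Combined with the continuity hypothesis on $x\mapsto K_x$, this produces a threshold $\varepsilon > 0$ such that $d_{\mathcal{H}}(K_x^{\mathrm{co}}, K_{x'}^{\mathrm{co}}) < r$ whenever $x'\in\mathcal{D}$ is within $\varepsilon$ of $x$.

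The key sub-lemma I would then prove is purely geometric: if two compact convex sets $K, K' \subset \mathbb{R}^n$ satisfy $B_R(y) \subset K$ and $d_{\mathcal{H}}(K,K') \le D < R$, then $B_{R-D}(y) \subset K'$. I would argue by Hahn--Banach separation. Suppose $w \in B_{R-D}(y)$ but $w \notin K'$; then there exist a unit vector $v$ and a constant $c$ with $\langle w, v\rangle > c \ge \langle z, v\rangle$ for all $z \in K'$. Choose $\alpha$ with $D < \alpha < R-|w-y|$ (possible since $|w-y|<R-D$) and set $p := w + \alpha v$. Then $|p - y| \le |w-y|+\alpha < R$ gives $p \in K$, and Hausdorff closeness provides $p' \in K'$ with $|p'-p| \le D < \alpha$, so $\langle p', v\rangle \ge \langle w, v\rangle + \alpha - D > \langle w, v\rangle > c$, which contradicts the separating hyperplane. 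Applying this sub-lemma with $R = 3r$ and $D < r$ to each $c\in C$ yields $B_{2r}(c) \subset K_{x'}^{\mathrm{co}}$, hence $c \in \mathring{K}_{x'}^{\mathrm{co}}$, and so $C \subset \mathring{K}_{x'}^{\mathrm{co}}$ as required.

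The main obstacle, I expect, is precisely the sub-lemma: Hausdorff proximity of two convex sets by itself provides only two-sided \emph{approximate} containment (each is within $D$ of the other as a Minkowski neighbourhood), which is not strong enough to yield true set containment. The Hahn--Banach separation argument is what upgrades approximate containment into exact containment at points that sit strictly inside, and careful bookkeeping of the constants is what ensures the resulting ball radius $R-D$ remains strictly positive under our chosen $\varepsilon$.
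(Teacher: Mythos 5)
Your proof is correct and takes a genuinely different route from the paper's. The paper argues locally and combinatorially: for each point of $C$ it constructs a small open simplex $I$ sitting strictly inside a larger simplex $S$ whose vertices $v_1,\dots,v_{n+1}$ lie in $K_x^{\text{co}}$, covers $C$ by finitely many such $I$'s using compactness, writes each vertex $v_i$ as a Carath\'{e}odory combination of points of $K_x$, and then uses the Hausdorff continuity of $x\mapsto K_x$ to perturb each Carath\'{e}odory representative into $K_{x'}$, producing vertices $v_i'\in K_{x'}^{\text{co}}$ close enough to the $v_i$ that $I\subset\{v_1',\dots,v_{n+1}'\}^{\text{co}}\subset \mathring{K}_{x'}^{\text{co}}$. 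Your route is more metric and global: you first observe that the convex-hull operation is $1$-Lipschitz for the Hausdorff distance (so continuity of $x\mapsto K_x$ transfers to $x\mapsto K_x^{\text{co}}$), and then you prove a self-contained separation lemma showing that a ball $B_R(y)$ inside a compact convex set survives, with shrunken radius $R-D$, inside any compact convex set within Hausdorff distance $D<R$. What your version buys is brevity and a reusable geometric fact that isolates exactly where convexity enters; the paper's version avoids introducing the auxiliary distance between the convex hulls and works directly with representatives in $K_x$, at the cost of more bookkeeping with simplices. Both arguments are valid under the stated hypotheses (note $K_{x'}^{\text{co}}$ is indeed compact convex, as needed for your Hahn--Banach step), and yours is arguably the cleaner presentation.
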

\begin{proof}
    Since $\mathring{K}_{x}^{\text{co}}$ is open, then for any $x\in C$, there exists an simplex $S_x$ and corresponding $\{v_1,v_2,\cdots,v_{n+1}\}$ such that
    \begin{align*}
        x\in I_x:=\left\{\mathop{\sum}\limits_{i=1}^{n+1}\lambda_iv_i:\lambda_i\in\left(\frac{1}{2(n+1)},\frac{2}{n+1}\right),\mathop{\sum}\limits_{i=1}^{n+1}\lambda_i=1,i=1,\cdots,n+1\right\}.
    \end{align*}
    \quad Since $C$ is compact, and the simplex $I_x$ covers $C$, one can choose a sub-cover $\{I_{x_k}\}_{k=1,\cdots m}$ of $C$. For fixed $k=1,\cdots,m$, denote the simplex as $S:=S_{x_k}$, and the denote the corresponding points as $\{v_1,\cdots,v_{n+1}\}\subset\mathring{K}_x^{\text{co}}$. Let $I:=I_{x_k}$. If $\varepsilon<\text{dist}(\partial I,\partial S)$, then for any $v_i'\in B_{\varepsilon}(v_i),\ i=1,\cdots,n+1$, one has
    \begin{align*}
        I\subset\{v_1',v_2',\cdots,v_{n+1}'\}^{\text{co}}.
    \end{align*}
    Then for any $\varepsilon>0$ and $i=1,\cdots,n+1$, $B_{\varepsilon}(v_i)$ contains some point $v_i'\in K_{x'}$. In fact, by Carath\'{e}odory theorem, $v_i$ can be expressed as $\mathop{\sum}\limits_{j=1}^{n+1}\mu_jz_j$, where $z_j\in K_x,\ \mu_j\in[0,1]$ and $\mathop{\sum}\limits_{j=1}^{n+1}\mu_j=1$. Since $x\mapsto K_x$ is continuous in the Hausdorff distance, then for any $x'\subset\mathcal{D}$ with $d_{\mathcal{H}}(x,x')<\varepsilon$, there exist $z_j'\in K_{x'},\ j=1\cdots n+1$ such that $z_j'\in B_{\varepsilon}(z_j)$. Let
    \begin{align*}
        v_i'=\mathop{\sum}\limits_{j=1}^{n+1}\mu_jz_j'.
    \end{align*}
    Direct computations yield
    \begin{align*}
        \left|v_i-v_i'\right|\leq\mathop{\sum}\limits_{j=1}^{n+1}\mu_j\left|z_j-z_j'\right|<\varepsilon'.
    \end{align*}
   Hence, for sufficiently small $\varepsilon$, $I\subset\{v_1',v_2',\cdots,v_{n+1}'\}^{\text{co}}$. Since $v_i'\in K_{x'}^{\text{co}}$, then $I\subset K_{x'}^{\text{co}}$. Since $I$ is open, then $I\subset\mathring{K}_{x'}^{\text{co}}$. Since there are infinitely many such simplices, we can choose $\varepsilon>0$ such that the inclusion relation $I_{x_k}\subset \mathring{K}_{x'}^{\text{co}}$ holds for any $k=1,\cdots,m$. Hence,
    \begin{align*}
        C\subset\mathop{\cup}\limits_{k=1,\cdots,m}I_{x_k}\subset \mathring{K}_{x'}^{\text{co}}.
    \end{align*}
    This finishes the proof of the lemma.
\end{proof}

\begin{lemma}\label{lm3.4}
	Suppose that $(\rho,w,q)$ is a strict subsolution in a bounded open set $\mathcal{D}$ with constrain sets $K_x$. For given $\varepsilon>0$, there exists a compact set $\mathcal{C}\subset{D}$ and a sequence $\{w_k\}$ such that $\{\rho,w_k,q\}$ are strict subsolutions in $D$ and $w_k-w\in C_c^{\infty}$, $\text{supp}(w_k-w)\subset\mathcal{C}$, $w_k\rightarrow w$.
	
	\quad Furthermore, it holds that
	\begin{align*}
		\int_{\mathcal{D}}\text{dist}(w_k(x),K_x)dx\leq \varepsilon.
    \end{align*}
\end{lemma}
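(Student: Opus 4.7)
The plan is to patch together localized perturbations supplied by Lemma \ref{lm3.2}. Since $(\rho,w,q)$ is a strict subsolution on $\mathcal{D}$ and $w$ is continuous, $w(x)\in\mathring{K}_x^{\text{co}}$ for every $x\in\mathcal{D}$ with positive interior margin depending continuously on $x$. First I would pick a compact set $\mathcal{C}\subset\mathcal{D}$ whose complement has Lebesgue measure small enough that $\int_{\mathcal{D}\setminus\mathcal{C}}\text{dist}(w(x),K_x)\,dx<\varepsilon/2$; this is possible because $w$ is bounded and the sets $K_x$ are uniformly bounded on compact subsets of $\mathcal{D}$ (by continuity of $x\mapsto K_x$). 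Then cover $\mathcal{C}$ by finitely many pairwise disjoint open cubes $Q_j\Subset\mathcal{D}$ with centers $x_j$, small enough that on each $Q_j$ both the oscillation of $w$ and $d_{\mathcal{H}}(K_x,K_{x_j})$ are bounded by some $\delta>0$, and that Lemma \ref{lm3.3} gives a uniform inclusion margin for $w(x_j)$ inside $\mathring{K}_x^{\text{co}}$ for all $x\in Q_j$.

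Next, on each $Q_j$ I would apply Lemma \ref{lm3.2} with $\bar w=w(x_j)$, $\tilde K=K_{x_j}$, $\mathcal{U}=Q_j$, and tolerance $\varepsilon_j>0$ chosen so that $\sum_j\varepsilon_j|Q_j|<\varepsilon/4$. This produces $\tilde w_j\in C_c^\infty(Q_j)$ with $\mathcal{L}\tilde w_j=0$ and a margin $\gamma_j>0$ such that
\begin{equation*}
Q_{\gamma_j}+w(x_j)+\tilde w_j(x)\subset\mathring{K}_{x_j}^{\text{co}},\qquad\frac{1}{|Q_j|}\int_{Q_j}\text{dist}\bigl(w(x_j)+\tilde w_j(x),K_{x_j}\bigr)\,dx<\varepsilon_j.
\end{equation*}
Setting $W:=w+\sum_j\tilde w_j$ and $\mathcal{C}':=\overline{\bigcup_j Q_j}\subset\mathcal{D}$, linearity gives $\mathcal{L}W=\mathcal{L}w$, so \eqref{eq2} is preserved. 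For $x\in Q_j$, writing $W(x)=(w(x_j)+\tilde w_j(x))+(w(x)-w(x_j))$, the choice $\delta<\min_j\gamma_j$ together with Lemma \ref{lm3.3} yields $W(x)\in\mathring{K}_x^{\text{co}}$; outside $\bigcup_j Q_j$, $W=w$ is already a strict subsolution.

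For the integral estimate, on each $Q_j$ the triangle inequality applied to a nearest point of $K_{x_j}$ pushed over to $K_x$ gives
\begin{equation*}
\text{dist}(W(x),K_x)\leq\text{dist}\bigl(w(x_j)+\tilde w_j(x),K_{x_j}\bigr)+|w(x)-w(x_j)|+d_{\mathcal{H}}(K_x,K_{x_j})\leq\text{dist}\bigl(w(x_j)+\tilde w_j(x),K_{x_j}\bigr)+2\delta,
\end{equation*}
and summing over $j$ together with the $\mathcal{D}\setminus\mathcal{C}$ contribution yields the total bound $\varepsilon$ once $\delta$ is small. To obtain the sequence $\{w_k\}\to w$, I would rerun this construction with the high-frequency parameter $\lambda$ of Lemma \ref{lm2.6} (built into the proof of Lemma \ref{lm3.2}) chosen along $\lambda_k\to\infty$: the supports remain in the fixed compact $\mathcal{C}'$, the above estimates hold uniformly in $k$, and the zero-mean oscillations of the $\tilde w_j$ at vanishing scales force $w_k\rightharpoonup w$ in the sense of distributions. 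The \emph{main difficulty} is the $x$-dependence of the constraint set $K_x$, which forbids a direct application of Lemma \ref{lm3.2}; Lemma \ref{lm3.3} is exactly what reconciles this, letting us absorb both the oscillation of $w$ and the Hausdorff variation of $K_x$ into the interior margin produced at a single representative point $x_j$ in each cube.
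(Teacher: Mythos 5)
Your overall strategy matches the paper's: patch together the localized perturbations of Lemma~\ref{lm3.2} over a cube decomposition of a large compact subset of $\mathcal{D}$, and invoke Lemma~\ref{lm3.3} to reconcile the representative point in each cube with the $x$-dependence of the constraint set. The triangle-inequality estimate and the bookkeeping of the error budget are also essentially the paper's. However, there are two points where the argument as written has gaps.

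First, pairwise disjoint open cubes cannot literally cover the compact set $\mathcal{C}$; their union always leaves out a nontrivial boundary skeleton. The paper handles this with a Whitney decomposition: it first takes a finite cover $\{\mathcal{O}^i\}$ by the cubes $Q_{r(\zeta^i)}(\zeta^i)$ supplied by Lemma~\ref{lm3.3}, and then decomposes the open set $\bigcup_i\mathcal{O}^i$ into countably many pairwise disjoint open sub-cubes covering it up to measure zero, truncating to finitely many at the cost of an additional small-measure remainder that is absorbed into the error budget. Your phrasing needs to be adjusted accordingly; it does not affect the spirit of the argument, but as stated it is not correct.

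Second, and more substantially, the construction of the sequence $w_k\to w$ via ``rerunning with $\lambda_k\to\infty$'' is not available from Lemma~\ref{lm3.2} as stated: that lemma returns a single $\tilde w$ with no frequency parameter exposed, and internally its proof is a finite recursion of Lemma~\ref{lm2.6} applications with separate $\lambda$'s, so you would need to strengthen or reprove Lemma~\ref{lm3.2} to give uniform control. The paper instead fixes the Whitney decomposition once, and at stage $k$ subdivides it into disjoint cubes $Q^{(j,k)}$ of side $<2^{-k}r_0$ before applying Lemma~\ref{lm3.2} on each; since each $v_{jk}$ is compactly supported in $Q^{(j,k)}$ and has zero mean, for any test function $\phi$ one gets $\bigl|\int\tilde w_k\phi\bigr|\le\|\nabla\phi\|_\infty\,2^{-k}r_0\sum_j\|v_{jk}\|_{L^1}$, which tends to $0$, and the distance integrals remain $\le\varepsilon$ at every stage. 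This shrinking-scale device achieves the weak convergence without opening the black box of Lemma~\ref{lm3.2}, and is the cleaner route; your approach could be made to work but would require rewriting Lemma~\ref{lm3.2} to carry the parameter.
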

\begin{proof}
	Set $M=2\underset{x\in\mathcal{D}}{\sup}\underset{w\in K_x}{\sup}\left|w\right|$. Let $\mathcal{C}'$ be a compact subset such that $\mathcal{H}^{n}(\mathcal{D}\backslash\mathcal{C}')<\dfrac{\varepsilon}{4M}$. We now choose any point $\zeta\in\mathcal{D}$, and open bounded set $\mathcal{U}$, set $\varepsilon=\dfrac{\varepsilon}{4\left|\mathcal{D}\right|}$, then by Lemma \ref{lm3.2}, there exists a $v\in \mathcal{C}_c^{\infty}(\mathcal{U})$ satisfying $\mathcal{L}w=0$ and
	\begin{align*}
		Q_{\gamma}+w(\zeta)+v\subset \mathring{K}_{\zeta}^{\text{co}},\quad \dfrac{1}{\left|\mathcal{U}\right|}\int_{\mathcal{U}}dist(w(\zeta)+v(x),K_{\zeta})dx<\dfrac{\varepsilon}{4\left|\mathcal{D}\right|}.
	\end{align*}
	Denote $\mathfrak{K}_{\zeta,\gamma}=\{\bar{w}\in \mathring{K}_{\zeta}^{co}:\text{dist}(\bar{w},\partial K_{\zeta}^{\text{co}})\geq \dfrac{\gamma}{2}\}$. It holds that
	\begin{align*}
		Q_{\gamma/2}+w(\zeta)+v\subset\overline{\mathfrak{K}_{\zeta,\gamma}}\subset \mathring{K}_{\zeta}^{\text{co}}.
	\end{align*}
	It follows from Lemma \ref{lm3.2} that there exists a positive number $r(\zeta)>0$ such that
	\begin{align*}
		w(x)+v\subset{\mathfrak{K}_{\zeta,\gamma}}\subset \mathring{K}_x^{\text{co}}\ for\ x\in Q_{r(\zeta)}(\zeta).
	\end{align*}
	\quad Let $\{\mathcal{O}^{i}=Q_{r(\zeta^{i})}(\zeta^{i})\}_{i=1}^{N}$ be a finite covering of $\mathcal{C}'$ such that $\mathop{\cup}\limits_{i=1}^{N}\bar{\mathcal{O}^i}\subset \mathcal{D}$. Set $\mathcal{C}:=\mathop{\cup}\limits_{i=1}^{N}\bar{\mathcal{O}^i}$ and $r_0=\dfrac{1}{2}\underset{i}{\min}r(\zeta^{i})$. Then by Whitney covering lemma, for the open sets $\mathcal{O}^1$ and $\{\mathcal{O}^i\backslash(\mathop{\cup}\limits_{j=1}^{i-1}\bar{\mathcal{O}^{j}})\}_{i=2}^{N}$, there exists disjoint open cubes $\{\tilde{Q}^{l}\}_{l=1}^{\infty}$ with $\mathcal{H}^{n}(\mathcal{C}\backslash(\cup_l\tilde{Q}^l))=0$, each $\tilde{Q}^{l}$ lying in some $\mathcal{O}^{i(l)}$. For $k=1,2,...$, decomposing the cubes $\tilde{Q}^{l}$, then there exist finitely many disjoint open cubes $\{Q^{(j,k)}\}=\{Q_{r_jk}(x^{j,k})\}_{j=1}^{J_k}$ satisfying
	\begin{align*}
		\underset{1\leq j\leq J_k}{\max}r_{jk}<2^{-k}r_0,\ \mathcal{H}^{n}(\mathcal{C}\backslash(\mathop{\cup}\limits_{j=1}^{J_k}Q^{j,k}))<\dfrac{\epsilon}{4C_0}.
	\end{align*}
	and for each $j=1,2,...,J_k$, $Q^{j,k}\subset \mathcal{O}^{i(j,k)}$ for some $i(j,k)$.
	
	\quad Then, apply Lemma \ref{lm3.2} to $w(\zeta^{i(j,k)})$ with $\mathcal{U}=Q^{j,k}$ and $\varepsilon=\dfrac{\varepsilon}{4\left|\mathcal{D}\right|}$, one gets $v_{jk}(x)\in\mathcal{C}_c^{\infty}(Q^{(j,k)})$ which satisfies $\mathcal{L}w=0$,
	\begin{align*}
		Q_{\gamma}+w(\zeta^{i(j,k)})+v_{jk}\subset\mathring{K}_{\zeta^{i(j,k)}}^{\text{co}},
	\end{align*}
	and
	\begin{align*}
		\dfrac{1}{\left|Q^{j,k}\right|}\int_{Q^{(j,k)}}\text{dist}(w(\zeta^{i(j,k)})+v_{jk}(x),K_{\zeta^{i(j,k)}})dx<\dfrac{\varepsilon}{4\left|\mathcal{D}\right|}.
	\end{align*}
	\quad Define
	\begin{align*}
		\tilde{w}_k(x)=\mathop{\Sigma}\limits_{j=1}\limits^{J_k}v_{jk}(x)\quad \text{and}\quad w_k(x)=w(x)+\tilde{w}_k(x).
	\end{align*}
	Then, $\tilde{w}_k\in\mathcal{C}_c^{\infty}(\mathcal{D})$ and $\text{supp}\tilde{w}_k\subset\mathcal{C}$. Since $\mathcal{L}v_{jk}=0$, then $(\rho,w_k,q)$ solves \eqref{eq2}. Moreover, since $Q^{j,k}\subset \mathcal{O}^{i(j,k)}$, for $x\in Q^{j,k}$,
	\begin{align*}
		w_k(x)=w(x)+v_{jk}(x)\subset \mathring{K}_x^{\text{co}}.
	\end{align*}
	\quad For $x\in \mathcal{D}\backslash(\cup_jQ^{j,k})$, $w_k(x)=w(x)\in \mathring{K}_x^{\text{co}}$. Therefore, $(\rho,w_k,q)$ is a strict subsolution in $\mathcal{D}$ with the constrain set $K_{x}$.
	
	\quad Furthermore, by direct computations, one has
	\begin{align*}
		\int_{Q^{(j,k)}}\text{dist}(w_k(x),K_x)dx&=\int_{Q^{(j,k)}}\text{dist}(w(x)+v_{jk}(x),K_x)dx\\
		&\leq\int_{Q^{(j,k)}}\text{dist}(w(\zeta^{i(j,k)})+v_{jk}(x),K_{\zeta^{i(j,k)}})+\dfrac{\varepsilon}{4\left|\mathcal{D}\right|}dx\\
		&\leq\dfrac{\left|Q^{j,k}\right|}{2\left|\mathcal{D}\right|}\varepsilon.
	\end{align*}
	Hence,
	\begin{align*}
		\int_{D}\text{dist}(w_k(x),K_x)dx&=\mathop{\Sigma}\limits_{j=1}\limits^{J_k}\int_{Q^{(j,k)}}\text{dist}(w_k(x),K_x)dx+\int_{\mathcal{D}\backslash(\cup_jQ^{j,k})}\text{dist}(w_k(x),K_x)dx\\
		&\leq\left|\mathcal{D}\right|\dfrac{\epsilon}{2\left|\mathcal{D}\right|}+C_0\left|\mathcal{D}\backslash(\cup_jQ^{j,k})\right|\\
		&\leq\dfrac{\varepsilon}{2}+C_0\left|\mathcal{D}\backslash\mathcal{C}'\right|\leq \epsilon.
	\end{align*}
	Therefore, for $\phi\in\mathcal{C}_c^{\infty}(\mathbb{R}^n)$, one has
	\begin{align*}
		\left|\int_{\mathbb{R}^n}\tilde{w}_k\phi dx\right|=\left|\mathop{\Sigma}\limits_{j=1}^{J_k}\int_{Q^{(j,k)}}v_{jk}\phi dx\right|.
	\end{align*}
    This finishes the proof of the lemma.
\end{proof}

\begin{proposition}\label{pp3.1}
    If $(\rho,\bar{m},\bar{\mathbf{U}},q)$ is a strict subsolution in $\mathcal{D}$ with constrain sets $K_x$, then there exist infinitely many pairs $w=(m,\mathbf{U})$ such that $(\rho,m,\mathbf{U},q)$ are subsolutions and
	\begin{align*}
		(m,\mathbf{U})(x)\in K_x\quad \text{for a.e.}\quad x\in \bar{D},\quad \text{and}\quad \text{supp}(m-\bar{m},\mathbf{U}-\bar{\mathbf{U}})\subset\bar{D}.
	\end{align*}
\end{proposition}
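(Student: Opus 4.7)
The plan is to apply a Baire category argument that upgrades the perturbation results of Lemmas \ref{lm3.1}--\ref{lm3.4} into the existence of infinitely many exact solutions. Since every strict subsolution extending $(\bar m, \bar{\mathbf{U}})$ is uniformly bounded in $L^\infty(\mathcal{D})$, the set
\[ X_0 := \{w = (m,\mathbf{U}) : (\rho, w, q) \text{ is a strict subsolution in } \mathcal{D},\ w|_{\mathcal{D}^c} = (\bar m, \bar{\mathbf{U}})|_{\mathcal{D}^c}\} \]
lies in a common bounded set $B \subset L^\infty$. On $B$ the weak-$*$ topology is metrizable; let $d$ be such a metric and set $X := \overline{X_0}^d$, which is a complete (in fact compact) metric space in which $X_0$ is dense by construction. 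Weak-$*$ limits preserve both the linear system \eqref{eq2} and the closed convex constraint $(m,\mathbf{U})(x) \in K_x^{\text{co}}$, so every $w \in X$ is still a subsolution with $\text{supp}(w - (\bar m, \bar{\mathbf{U}})) \subset \overline{\mathcal{D}}$.

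Next I would introduce the defect functional
\[ J(w) := \int_\mathcal{D} \bigl(n \rho(x) q(x) - |m(x)|^2\bigr)\, dx. \]
By Lemma \ref{lm2.1}(ii) and (iv) every subsolution obeys $|m|^2/(2\rho) \leq e(\rho, m, \mathbf{U}) \leq nq/2$, whence $J \geq 0$ on $X$. Since $w \mapsto \int |m|^2 dx$ is convex, $J$ is weak-$*$ upper semi-continuous. Crucially, if $J(w) = 0$ then $|m|^2 = n\rho q$ a.e., which forces equality throughout the chain above; the equality case of Lemma \ref{lm2.1}(ii) then yields $\mathbf{U} = m\otimes m/\rho - |m|^2/(n\rho)\mathbf{I}_n$, so $(m,\mathbf{U})(x) \in K_x$ a.e., i.e.\ $w$ is an exact weak solution. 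The goal therefore reduces to locating the set $\{J = 0\}$ in $X$.

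The heart of the argument is to show each $U_k := \{w \in X : J(w) < 1/k\}$ is open and dense in $X$. Openness is immediate from upper semi-continuity of $J$. For density, given $w \in X$ and $\eta > 0$, first use density of $X_0$ in $X$ to pick $w_0 \in X_0$ with $d(w_0, w) < \eta/2$, then apply Lemma \ref{lm3.4} to $w_0$ with small parameter $\varepsilon > 0$: this yields strict subsolutions $w_j \in X_0$ satisfying $\text{supp}(w_j - w_0) \subset \mathcal{D}$, $w_j \to w_0$ in the sense of distributions (hence in $d$), and $\int_\mathcal{D} \text{dist}(w_j, K_x)\, dx < \varepsilon$. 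Because $(m,\mathbf{U}) \mapsto n\rho q - |m|^2$ is Lipschitz on $B$ and vanishes on each $K_x$, this upgrades to $J(w_j) \leq C\varepsilon$; choosing $\varepsilon < 1/(Ck)$ and $j$ large enough places $w_j$ inside $U_k$ within distance $\eta$ of $w$.

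By the Baire category theorem, $\bigcap_{k \geq 1} U_k = \{J = 0\}$ is residual in $X$, hence uncountable, yielding infinitely many distinct exact solutions with the required support property. The main obstacle is the density step: Lemma \ref{lm3.4} requires a strict subsolution as input and so cannot be applied directly at a general $w \in X$. The resolution is the two-stage approximation described above—first pass from $w$ to a nearby strict subsolution $w_0 \in X_0$ by density, then apply the cube-by-cube oscillatory perturbation of Lemma \ref{lm3.4} inside $X_0$ to decrease $J$. The upper semi-continuity of $J$ for the weak-$*$ topology, equivalently the convexity of $|m|^2$, is precisely what makes the $U_k$ open and thus enables the Baire argument to close.
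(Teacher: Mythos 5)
Your proposal is correct in substance but takes a genuinely different route from the paper's own proof of Proposition~\ref{pp3.1}. You locate exact solutions as the residual set $\{J=0\}$ of a weak-$*$ upper semi-continuous defect functional $J(w)=\int_\mathcal{D}\bigl(n\rho q-|m|^2\bigr)dx$, showing each $U_k=\{J<1/k\}$ is open (by semi-continuity) and dense (by two-stage approximation through $X_0$ plus Lemma~\ref{lm3.4}), then invoking Baire. This is the classical De~Lellis--Sz\'ekelyhidi argument, and it is in fact the one the paper uses later for Proposition~\ref{pp5.1}. The paper's proof of Proposition~\ref{pp3.1} instead builds a Cauchy sequence $\{w_k\}$ directly: starting from a given $w'\in X_0$, it applies Lemma~\ref{lm3.4} on an exhausting family of bounded open subsets $\mathcal{D}_j\subset\mathcal{D}$, and at each step selects $w_{k+1}$ so that the defect $\int\text{dist}(w_k,K_x)dx$ halves while $\|w_k\|_{L^2(\mathcal{D}_j)}^2$ is nearly monotone; this monotonicity upgrades weak-$*$ convergence to strong $L^2$ convergence on each $\mathcal{D}_j$, which is what lets the defect estimate pass to the limit. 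Your residual-set argument is conceptually cleaner because it needs no strong convergence at all; the Cauchy-sequence method buys the ability to handle unbounded $\mathcal{D}$ via the exhaustion. That is the one place your proof is not fully general: the functional $J$ is only finite when $\mathcal{D}$ has finite measure, so as written your argument implicitly assumes $\mathcal{D}$ is bounded. This is harmless in the paper's applications (where $\mathcal{D}$ is a bounded open set or $\mathbb{T}^n$), and can be patched by inserting a positive integrable weight in $J$, but it should be noted since the proposition's statement does not assume boundedness. Your reduction from $J(w)=0$ to $(m,\mathbf{U})(x)\in K_x$ via the equality case of Lemma~\ref{lm2.1}(ii) is correct, as is the Lipschitz upgrade from $\int\text{dist}(w_j,K_x)dx<\varepsilon$ to $J(w_j)\leq C\varepsilon$. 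One small point worth spelling out is that a residual set in $X$ is infinite; this follows because $X_0$ is dense and has no isolated points, so each $U_k$ contains infinitely many points.
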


\begin{proof}
	Define
	\begin{align*}
		X_0=\left\{w\in L^{\infty}_{w\ast}:(\rho,w,q)\ \text{are strict subsolutions in } \mathcal{D}\ \text{with}\ K_x\ \text{and}\ w-\bar{w}\in C_c^{\infty}\right\}.
	\end{align*}
	
	\quad Since $\bar{w}\in X_0$, the set $X_0$ is non-empty. Since $X_0$ is a bounded set, we denote $X$ be the closure of $X_0$ in $C_{\text{loc}}$ topology. By Banach-Alaoglu theorem, $X$ is metrizable, hence there exists a metric $d$ such that $(X,d)$ is a complete metric space. Hence, it suffices to prove that the set of $w=(m,\mathbf{U})$ is an infinite dense set in $X$.

	\quad For any $w'\in X_0$, and $\varepsilon_0>0$, we aim to construct a sequence $\{w_k\}\subset X_0$ which converge to $w\in X$, such that $d(w_k,w')\leq \varepsilon_0$, and for any compact subset $\mathcal{C}\subset\mathcal{D}$, we have
	\begin{align*}
		w_k\rightarrow w\quad \text{strongly in }L^2\quad \text{and}\quad \int_C \text{dist}(w_k(x),K_x)dx\rightarrow 0.
	\end{align*}
	
	If such $\{w_k\}$ exists, then $w=\lim w_k$, and $w$ satisfies $d(w',w)\leq \varepsilon_0$. Therefore, $w=(m,\mathbf{U})$ is a dense set in $X_0$, and thus $w$ is a dense set in $X$. Since $w\notin X_0$, $\{w_k\}$ is an infinite sequence in $X_0$, hence $X$ is an infinite set.
	
	\quad Let $\{\mathcal{D}_j\}_{j=1}^{\infty}$ be a family of bounded open subsets of $\mathcal{D}$ such that $\mathcal{D}_1\subset\mathcal{D}_2\subset...$, and $D=\cup\mathcal{D}_j$. Let $w_1=w'$. Given $w_k=(m_k,\mathbf{U}_k)\in X_0$, we construct the sequence $w_{k+1}$ as follows.
	
	\quad Assume $(\rho,m_k,\mathbf{U}_k,q)$ is a strict subsolution in $\mathcal{D}_{k+1}\subset\mathcal{D}$ with constrain set $K_{\rho,q}$, by Lemma \ref{lm3.4}, there exists a sequence $\{w^{(i,k)}\}_{i=1}^{\infty}$ such that $(\rho,w^{(i,k)},q)$ are strict subsolutions in $\mathcal{D}_{k+1}$ with constrain set $K_{\rho,q}$, and satisfies
	\begin{align*}
		w^{i,k}-w_k\in C_c^{\infty}(\mathcal{D}_{k+1}),\quad w^{i,k}\rightarrow w_k,
	\end{align*}
	and
	\begin{align*}
		\int_{\mathcal{D}_{k+1}}\text{dist}(w^{i,k}(x),K_x)dx\leq\{2^{-k},\frac{1}{2}\int_{\mathcal{D}_1}\text{dist}(w_k(x),K_x)dx\}.
	\end{align*}
	\quad Since $w^{(i,k)}(x)\in \mathring{K}_{\rho(x),q(x)}^{\text{co}}$ for $x\in \mathcal{D}_{k+1}$ and  $\text{supp}(w^{i,k}-w_k)\subset \mathcal{D}_{k+1}$, it follows that $w^{i,k}(x)\in \mathring{K}_{\rho(x),q(x)}^{\text{co}}$ for $x\in\mathcal{D}$. Then, $(\rho,w^{(i,k)},q)$ are strict subsolutions in $\mathcal{D}$. Since $w^{(i,k)}-\bar{w}=(w^{(i,k)}-w_k)+(w_k-w)\in C_c^{\infty}$, then $w^{(i,k)}\in X_0$. Furthermore, since $w^{(i,k)}\rightarrow w_k$, then there exists a sufficiently large $i_k$ such that
	\begin{align*}
		\max_{1\leq j<k}\left|\int_{\mathcal{D}_j}(w^{(i_k,k)}-w_k)w_kdx\right|<\min\left\{2^{-k},\frac{1}{100\left|\mathcal{D}_k(\int_{\mathcal{D}_1}\text{dist}(w_k(x),K_xdx)^2\right|}\right\},
	\end{align*}
	and
	\begin{align}
		d(w^{(i_k,k)},w_k)<2^{-(k+1)}\varepsilon_0.
	\end{align}
	\quad Set $w_{k+1}=w^{(i_k,k)}$, then $w_{k+1}$ satisfies
	\begin{align*}
		\int_{\mathcal{D}_{k+1}}\text{dist}(w_{k+1}(x),K_x)dx\leq\{2^{-k},\frac{1}{2}\int_{\mathcal{D}_1}\text{dist}(w_k(x),K_x)dx\},
	\end{align*}
	\begin{align*}
		\max_{1\leq j<k}\left|\int_{\mathcal{D}_j}(w_{k+1}-w_k)w_kdx\right|<\min\left\{2^{-k},\frac{1}{100\left|\mathcal{D}_k(\int_{\mathcal{D}_1}dist(w_k(x),K_{\rho(x),q(x)}dx)^2\right|}\right\},
	\end{align*}
	and
	\begin{align}
		d(w_{k+1},w_k)<2^{-(k+1)}\varepsilon_0.
	\end{align}
	Hence, $\{w_k\}$ is a Cauchy sequence in $X$, and $w_k$ converges to some $w\in X$. We now need to show that for fixed $j$, $\{w_k\}$ converges strongly in $L^2(\mathcal{D}_j)$.
	
	\quad For any $k\geq j\geq 1$, $\mathcal{D}_1\subset\mathcal{D}_j\subset\mathcal{D}_{k+1}$, one has
	\begin{align*}
		\int_{\mathcal{D}_j}\left|w_{k+1}-w_k\right|dx&\geq\int_{\mathcal{D}_j}\text{dist}(w_k,K_x)dx-\int_{\mathcal{D}_j}\text{dist}(w_{k+1},K_x)dx\\
		&\geq\int_{\mathcal{D}_j}\text{dist}(w_k,K_x)dx-\int_{\mathcal{D}_{k+1}}\text{dist}(w_{k+1},K_x)dx\\
		&\geq\frac{1}{2}\int_{\mathcal{D}_j}\text{dist}(w_k,K_x)dx.
	\end{align*}
	Applying Holder's inequality yields
	\begin{align*}
		\|w_{k+1}-w_k\|^2_{L^2(\mathcal{D}_j)}\geq\frac{1}{4\left|\mathcal{D}_j\right|}\left(\int_{\mathcal{D}_j}\left|w_{k+1}-w_k\right|dx\right)^2.
	\end{align*}
	Hence,
	\begin{align*}
		\|w_{k+1}\|^2_{L^2(\mathcal{D}_j)}-\|w_k\|^2_{L^2(\mathcal{D}_j)}=\|w_{k+1}-w_k\|^2_{L^2(\mathcal{D}_j)}-2\int_{\mathcal{D}_j}(w_k-w_{k+1})w_kdx\geq 0.
	\end{align*}
	Thus, $\{\left|w_k\right|_{L^2(\mathcal{D}_j)}^2\}_{k=j}^{\infty}$ is non-decreasing, and it is convergent since it is bounded.
	
	\quad For $k>m\geq j$, we have
	\begin{align*}
		\|w_k-w_m\|_{L^2(\mathcal{D}_j)}^2&\leq 2(\|w_k-w_{k-1}\|^2+...+\|w_{m+1}-w_m\|^2) \\
		&=2\mathop{\Sigma}\limits_{l=m}\limits^{k-1}\left(\|w_{k+1}\|^2-\|w_l\|^2-2\int_{\mathcal{D}_j}(w_{l+1}-w_l)w_ldx\right) \\
		&\leq 2(\|w_k\|^2-\|w_m\|^2)+2^{-m+3}.
	\end{align*}
	Hence, $\{w_k\}$ also converges strongly in $L^2(\mathcal{D}_j)$.
\end{proof}

\section{Construction of suitable strict subsolutions}

\quad In this section, we construct suitable strict subsolutions to \eqref{eq2}. We need strict subsolutions with periodic boundary and also strict subsolutions with compact suppports. Though in general case, it is not easy to constuct subsolutions with compact supports since the solution of the Poisson equation $\Delta u=p$ may not have compact support if $p$ does not have compact support. Fortunately, Akramov and Wiedemann found that in some specific case of $p$, there exist compact support solutions of the Poisson equation\cite{MR0493420}, which helps on constructing strict subsolutions to \eqref{eq2}. 

\quad For any set $\Omega \subset \mathbb{R}^n$ and $\varepsilon>0$, define $\Omega^\varepsilon:=\{x\in\mathbb{R}^n:\text{dist}(x,\Omega)<\varepsilon\}$. Let $\omega:\mathbb{R}^n\rightarrow\mathbb{R}$ be a smooth spherically symmetric function such that: $w(x)$ is constant for $\left|x\right|<\frac{1}{2}$, $\text{supp}(\omega)\subset B_1(0)$, $\omega(w)\geq 0$ and
\begin{align*}
	\int_{\mathbb{R}^n}\omega(x)dx=1.
\end{align*}
\quad Denote $\omega^{\varepsilon}(x)=\frac{1}{\varepsilon^n}\omega(\frac{x}{\varepsilon})$ for $\varepsilon>0$. Since $\rho(x)=\bar{\rho}$ for $x\notin \Omega$, then denote $p(\bar{\rho})=\bar{p}$. Let $p_1(x)=p(\rho(x))-\bar{p}$. Under this definition, we have $\text{supp}(p_1)\subset\Omega$ and
\begin{align*}
	\int_{\mathbb{R}^n}p_1(x)dx=\int_{\Omega}p_1(x)dx=0.
\end{align*}

\begin{proposition}\label{pp4.1}(\cite[Lemma 5]{MR0493420}) Let $p^{\varepsilon}(x)=p_1(x)-p_1\ast\omega^{\varepsilon}(x)$, so that $\text{supp}(p^{\epsilon})\subset\Omega^{\varepsilon}$. Then there exists $u\in C_c^{2,\alpha}(\mathbb{R}^n)$ for every $0<\alpha<1$ such that
	\begin{align*}
		\Delta u=p^{\varepsilon}\quad \text{and}\quad \text{supp}(u)\subset\overline{\Omega^{\varepsilon}}.
	\end{align*}
\end{proposition}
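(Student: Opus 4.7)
The plan is to construct $u$ explicitly as $u := v_1 - v_1 * \omega^{\varepsilon}$, where $v_1 := N * p_1$ is the Newtonian potential of $p_1$ (with $N$ the fundamental solution of the Laplacian on $\mathbb{R}^n$). Since $p_1$ has compact support in $\Omega$ and $\int p_1\, dx = 0$, the convolution $v_1$ is well-defined, smooth outside $\overline{\Omega}$, and decays at infinity; moreover $\Delta v_1 = p_1$ in the sense of distributions. Because convolution commutes with differentiation, $\Delta u = p_1 - p_1 * \omega^{\varepsilon} = p^{\varepsilon}$, so the PDE is automatically satisfied.

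The heart of the argument is showing that $\mathrm{supp}(u) \subset \overline{\Omega^{\varepsilon}}$. The key observation is that $v_1$ is harmonic on $\mathbb{R}^n \setminus \overline{\Omega}$, since $\Delta v_1 = p_1$ and $\mathrm{supp}(p_1) \subset \overline{\Omega}$. Fix any $x \notin \overline{\Omega^{\varepsilon}}$; then $B_{\varepsilon}(x) \cap \overline{\Omega} = \emptyset$, so $v_1$ is harmonic on $B_{\varepsilon}(x)$. Using polar coordinates $y = rs$ with $s \in S^{n-1}$ and exploiting the spherical symmetry of $\omega$,
\begin{align*}
    (v_1 * \omega^{\varepsilon})(x) &= \int_0^{\varepsilon} \omega^{\varepsilon}(r)\, r^{n-1} \int_{S^{n-1}} v_1(x - rs)\, dS(s)\, dr \\
    &= v_1(x) \int_0^{\varepsilon} \omega^{\varepsilon}(r)\, r^{n-1} \left| S^{n-1} \right| dr = v_1(x),
\end{align*}
where the middle equality is the spherical mean value property for harmonic functions and the last equality uses $\int \omega^{\varepsilon} = 1$. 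Hence $u(x) = v_1(x) - (v_1 * \omega^{\varepsilon})(x) = 0$ for $x \notin \overline{\Omega^{\varepsilon}}$, as required.

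For regularity, note that $\rho \in C^1(\mathbb{R}^n)$ and $p$ is continuously differentiable on $(0,\infty)$, so $p_1 = p(\rho) - \bar p \in C^1(\mathbb{R}^n)$, and therefore $p^{\varepsilon} = p_1 - p_1 * \omega^{\varepsilon} \in C^1(\mathbb{R}^n) \subset C^{\alpha}(\mathbb{R}^n)$ for every $0 < \alpha < 1$. Since $u$ has compact support in $\overline{\Omega^{\varepsilon}}$ and $\Delta u = p^{\varepsilon} \in C^{\alpha}$, standard interior Schauder estimates give $u \in C^{2,\alpha}_{\mathrm{loc}}(\mathbb{R}^n)$, and the compact support then upgrades this to $u \in C_c^{2,\alpha}(\mathbb{R}^n)$.

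The only step that requires genuine insight is the vanishing of $u$ outside $\Omega^{\varepsilon}$, and this reduces to the recognition that convolution with a radial mollifier at scale $\varepsilon$ coincides with the identity on functions that are harmonic on $\varepsilon$-balls. Everything else is bookkeeping. I therefore do not anticipate any substantial obstacle; the construction $u = v_1 - v_1 * \omega^{\varepsilon}$ is essentially forced once one seeks a compactly supported antiderivative (under $\Delta$) for a difference $p_1 - p_1 * \omega^{\varepsilon}$.
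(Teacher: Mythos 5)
The paper does not prove Proposition~\ref{pp4.1}; it is cited verbatim from Akramov--Wiedemann. Your construction $u = v_1 - v_1 * \omega^{\varepsilon}$ with $v_1 = N*p_1$, combined with the observation that mollification by a radial kernel at scale $\varepsilon$ acts as the identity on functions harmonic on $\varepsilon$-balls (via the spherical mean-value property), is correct and is the standard route to this result; the PDE, support, and $C^{2,\alpha}$ regularity claims all check out under the standing hypotheses ($p_1 \in C^1$ with compact support).
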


\begin{proposition}\label{pp4.2}(\cite[Theorem III.3.1]{MR2808162}) Let $\tilde{\Omega}$ be a bounded locally Lipschitz domain containing $\overline{\Omega^{\epsilon}}\subset\mathbb{R}^n$. If $p\in C_c^{\infty}(\Omega^{\epsilon})$ satisfies the compatibility condition
	\begin{align*}
		\int_{\Omega'}p(x)dx=0,
	\end{align*}
	then, there exists $\{\phi_j\}_{j=1}^{n}\subset C_c^{\infty}(\tilde{\Omega})$ such that
	\begin{align*}
		\mathop{\sum}\limits_{j=1}^{n}\partial_j\phi_j=p.
	\end{align*}
\end{proposition}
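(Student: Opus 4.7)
The plan is to construct $\phi = (\phi_1,\dots,\phi_n)$ via the Bogovskii solution operator for the divergence equation. There are three natural stages: reduce to a star-shaped building block, write down an explicit solution operator on such a block, and patch the pieces back together.

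First, I would exploit that $\tilde{\Omega}$ is a bounded locally Lipschitz domain to cover $\overline{\tilde{\Omega}}$ by finitely many open sets $U_1,\dots,U_N$ such that each $D_j := U_j \cap \tilde{\Omega}$ is star-shaped with respect to some open ball $B_j \Subset D_j$. Take a partition of unity $\{\chi_j\}$ subordinate to this cover and set $p_j^{(0)} := \chi_j p$, so that $p = \sum_j p_j^{(0)}$ and each $p_j^{(0)}$ lives in $D_j$, but in general $\int p_j^{(0)} \neq 0$. To restore the zero-mean property on each piece, I would redistribute the masses $c_j := \int p_j^{(0)}$ (which sum to zero by hypothesis) using transfer functions: pick $\eta_j \in C_c^\infty(B_j)$ with $\int \eta_j = 1$ and, along a connected chain of overlapping $D_j$'s, subtract and re-add multiples of these $\eta_j$ so as to reach $p_j$ with $\int_{D_j} p_j = 0$, $\operatorname{supp} p_j \subset D_j$, and $\sum_j p_j = p$. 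Connectedness of $\tilde{\Omega}$ (which we may assume by handling components one at a time) makes this always possible.

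Second, on a single star-shaped piece $D$ with associated ball $B$ and $\omega \in C_c^\infty(B)$ with $\int \omega = 1$, I would define the Bogovskii operator component-wise by
\begin{equation*}
(\mathcal{B}p)_i(x) \;:=\; \int_{D} p(y)\,(x_i - y_i) \int_{1}^{\infty} \omega\bigl(y + s(x-y)\bigr)\, s^{n-1}\, ds \,dy.
\end{equation*}
Three properties need verification. Smoothness is immediate since $p$ is smooth and the inner integral depends smoothly on $(x,y)$ with the $s$-integration effectively over a bounded interval once $\omega$ is compactly supported. Compact support in $D$ follows from the star-shapedness: whenever $x$ lies outside a slightly enlarged version of $D$, every ray from $y\in\operatorname{supp} p$ through $x$ exits $D$ before reaching $B$, so the inner integrand vanishes. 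The divergence identity $\operatorname{div}(\mathcal{B}p) = p$ is the classical calculation: differentiating under the integral sign, changing variables $z = y + s(x-y)$, and using $\int p(y)\,dy = 0$ to drop the boundary term at infinity produces exactly $p(x)$.

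Third, I would set $\phi := \sum_j \mathcal{B}_j p_j$, where $\mathcal{B}_j$ is the Bogovskii operator associated to $(D_j, B_j, \omega_j)$. By construction each $\mathcal{B}_j p_j \in C_c^\infty(D_j) \subset C_c^\infty(\tilde{\Omega})$, and $\operatorname{div}\phi = \sum_j p_j = p$, finishing the proof. The main obstacle is the first stage: the mass redistribution that turns $p_j^{(0)}$ into zero-mean $p_j$ must be carried out without spoiling either the support condition or the total sum, which forces a careful induction along overlapping chains of the cover. Everything else reduces to the explicit formula and a change of variables, and the resulting regularity $C_c^\infty$ follows because $p \in C_c^\infty$ and each $\omega_j$ is smooth.
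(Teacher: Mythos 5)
The paper offers no proof here: Proposition 4.2 is quoted directly as Theorem III.3.1 of Galdi's monograph \cite{MR2808162}, and that theorem is established there by precisely the Bogovskii construction you sketch (decomposition of a bounded Lipschitz domain into finitely many domains star-shaped with respect to balls, mass redistribution, and the explicit integral operator). You are therefore reproducing the cited argument rather than taking a different route.

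Two steps of your outline, taken at face value, do not go through. In the mass-redistribution stage you pick $\eta_j\in C_c^\infty(B_j)$ and propose to subtract and re-add multiples of these $\eta_j$ along a chain of overlapping $D_j$'s. But $B_j$ is a ball compactly contained in $D_j$ and need not intersect $D_k$ for $k\neq j$, so adding $a\,\eta_j$ to $p_k^{(0)}$ destroys the requirement $\text{supp}\,p_k\subset D_k$. The transfer functions must instead be supported in the pairwise overlaps $D_j\cap D_k$: choose $\theta_{jk}\in C_c^\infty(D_j\cap D_k)$ with $\int\theta_{jk}=1$ for each adjacent pair, move mass along a spanning tree of the overlap graph, and use $\sum_j c_j=0$ together with connectedness of that graph to see the resulting linear system is solvable. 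Your smoothness argument also has a gap: the claim that the $s$-integration is ``effectively over a bounded interval'' is false, since the set of $s\ge 1$ with $y+s(x-y)\in\text{supp}\,\omega$ has length of order $|x-y|^{-1}$, which blows up as $x\to y$; the kernel $(x_i-y_i)\int_1^\infty\omega(y+s(x-y))s^{n-1}\,ds$ in fact has an $|x-y|^{1-n}$ singularity on the diagonal. Smoothness of $\mathcal{B}p$ for $p\in C_c^\infty$ is true but requires the polar change of variables $y=x-r\sigma$ (and then $\tau=t-r$ in the inner integral) to put the formula in a manifestly differentiable form, which is how Galdi's proof proceeds.
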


\begin{proposition}\label{pp4.3}(\cite[Theorem 1]{MR1949165})
    Given $f\in L^n(\mathbb{T}^n;\mathbb{R})$ which satisfies $\int_{\mathbb{T}^n}fdx=0$. There exists a solution $u\in L^\infty \cap W^{1,n}(\mathbb{T}^n;\mathbb{R}^n)$ of
    \begin{align*}
        \text{div }u=f,
    \end{align*}
    moreover,
    \begin{align*}
        \|u\|_{L^\infty}+\|u\|_{W^{1,n}}\leq C(n)\|f\|_{L^n}.
    \end{align*}
\end{proposition}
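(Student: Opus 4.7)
The plan is to follow the construction of Bourgain and Brezis. The easier half, producing a solution in $W^{1,n}$ with the stated bound, is classical: on $\mathbb{T}^n$, the mean-zero hypothesis $\int_{\mathbb{T}^n} f\, dx = 0$ makes $u_0 := \nabla(-\Delta)^{-1}f$ well-defined, and it satisfies $\mathrm{div}\, u_0 = f$ with $\|u_0\|_{W^{1,n}} \leq C(n)\|f\|_{L^n}$ by Calder\'on--Zygmund theory. The obstruction is that at the critical exponent $p=n$ the Sobolev embedding $W^{1,n} \hookrightarrow L^\infty$ fails, so $u_0$ need not be bounded, and the $L^\infty$ estimate cannot be read off from standard elliptic regularity.

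To secure the $L^\infty$ bound, I would add a divergence-free correction and set $u := u_0 + u_1$; since $\mathrm{div}\, u_1 = 0$, this remains a solution of the divergence equation. The correction is built by a Littlewood--Paley iteration: decompose $f = \sum_{k \geq 0} \Delta_k f$ into dyadic frequency blocks supported in annuli $\{|\xi|\sim 2^k\}$, and solve each piece separately. For a frequency-localized source, Bernstein's inequality converts the $L^n$ norm into an $L^\infty$ norm on that fixed scale, so one obtains a solution $u_k$ of $\mathrm{div}\, u_k = \Delta_k f$ with $\|u_k\|_{L^\infty}\lesssim \|\Delta_k f\|_{L^n}$ and with frequency support on the same annulus. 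Naively summing these bounds over $k$ yields a logarithmically divergent series, which is precisely the quantitative manifestation of the failure of the critical embedding. The construction therefore has to reorganize the dyadic pieces via an iterative scheme, absorbing a low-frequency part into the running solution at each step and deferring the high-frequency remainder.

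The hardest step, and the main obstacle, is the quantitative closure of this iteration. Starting from a partial solution $u^{(N)}$ satisfying $\mathrm{div}\, u^{(N)} = f - g_N$ with a residue $g_N$ that is small in a suitable (high-frequency) sense but only comparable to $\|f\|_{L^n}$ in the naive $L^n$ norm, one must construct the next correction so that both the $L^\infty$ and $W^{1,n}$ increments are geometrically summable. This requires the duality argument of Bourgain--Brezis, essentially an interpolation between Calder\'on--Zygmund boundedness on $L^n$ and Bernstein's frequency-localized gain, together with a compensation that exploits the divergence-free freedom in $u_1$. Once this loop is closed with summable constants, passing to the limit produces a solution in $L^\infty\cap W^{1,n}(\mathbb{T}^n)$ obeying the combined estimate, finishing the proof.
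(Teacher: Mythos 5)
The paper does not prove this statement; it is quoted verbatim from Bourgain and Brezis (the cited Theorem~1 of MR1949165) and used as a black box to construct the torus subsolution in Proposition~\ref{pp5.1}. So there is no ``paper's own proof'' to compare against, only a citation.

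As a reconstruction of the cited argument, your outline is faithful in its broad strokes: the Helmholtz piece $u_0=\nabla(-\Delta)^{-1}f$ settles the $W^{1,n}$ half by Calder\'on--Zygmund theory, the obstruction is the failure of $W^{1,n}\hookrightarrow L^\infty$ at the critical exponent, and Bourgain--Brezis overcome it by adding a divergence-free correction built from a frequency decomposition and a nonlinear iteration that trades the naive logarithmic loss for geometric summability. You also correctly identify why a direct dyadic sum fails. However, what you have written is not a proof of the proposition. You explicitly defer ``the quantitative closure of this iteration,'' and that closure \emph{is} the theorem: everything before it (the $W^{1,n}$ bound, the Littlewood--Paley split, the per-block Bernstein gain $\|u_k\|_{L^\infty}\lesssim\|\Delta_k f\|_{L^n}$) is standard and does not by itself yield a bounded solution. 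The substance of Bourgain--Brezis is the specific mechanism for absorbing the low-frequency part into the running solution while keeping both the $L^\infty$ increment and the $L^n$ norm of the deferred residue summable with constants that close, together with the duality/compensation step that exploits the divergence-free freedom. None of that mechanism appears in your sketch, only a statement that it ``has to'' happen. Absent that, the argument does not establish the $L^\infty$ bound and hence does not prove the proposition; it is a correct roadmap, not a proof. Since the paper itself treats the result as an external citation, the safe course is either to cite it, as the paper does, or to actually carry out the Bourgain--Brezis iteration in detail.
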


\quad Now, we are ready for constructing strictly subsolutions to \eqref{eq2}.

\begin{proposition}\label{pp4.4} Let $\mathbf{B}=a\mathbf{I}_n$($a>0$), $p\in C_c^{\infty}(\Omega^{\varepsilon})$ such that $\int_{\Omega'}p(x)dx=0$, where $\tilde{\Omega}$ is a bounded locally Lipschitz domain and $\overline{\Omega^{\varepsilon}}\subset\mathbb{R}^n$. Then there exists a pair $(m,\mathbf{U})\in C_c^\infty(\Omega^\varepsilon)$ of a vector field and a symmetric trace-free matrix field satisfying
\begin{align*}
    \begin{cases}
        \text{div } m=0,\\
        \text{div } \mathbf{U}+\nabla p=\mathbf{B}m.
    \end{cases}
\end{align*}
\end{proposition}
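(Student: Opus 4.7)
My plan is to bypass any scalar Poisson-type equation (which would require the restrictive form of $p$ used in Proposition \ref{pp4.1}) and instead build $(m,\mathbf{U})$ from a single compactly supported Bogovskii vector potential for $p$, exploiting the symmetric/antisymmetric decomposition of its gradient so that trace-freeness of $\mathbf{U}$ and divergence-freeness of $m$ follow algebraically, not from a further PDE solve.

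First I would apply Proposition \ref{pp4.2} to the given $p$ and the Lipschitz domain $\tilde\Omega$ to obtain $\vec\phi=(\phi_1,\ldots,\phi_n)\in C_c^\infty(\tilde\Omega)^n$ with $\sum_i\partial_i\phi_i=p$. With this in hand, I would propose the ansatz
\[
\mathbf{U}_{ij}:=-\frac{n}{2(n-1)}\bigl(\partial_i\phi_j+\partial_j\phi_i\bigr)+\frac{1}{n-1}\,p\,\delta_{ij},
\]
which is visibly symmetric and compactly supported, and whose trace
$-\tfrac{n}{n-1}\,\mathrm{div}\,\vec\phi+\tfrac{n}{n-1}\,p=0$
vanishes by the defining property of $\vec\phi$, so $\mathbf{U}$ takes values in $\mathbb{S}_0^{n\times n}$.

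A direct differentiation then yields
\[
(\mathrm{div}\,\mathbf{U}+\nabla p)_j=\frac{n}{2(n-1)}\bigl(\partial_j\,\mathrm{div}\,\vec\phi-\Delta\phi_j\bigr)=-\frac{n}{2(n-1)}\sum_i\partial_i\omega_{ij},
\]
where $\omega_{ij}:=\partial_i\phi_j-\partial_j\phi_i$ is antisymmetric. Setting
\[
m_j:=-\frac{n}{2a(n-1)}\sum_i\partial_i\omega_{ij}
\]
therefore enforces $\mathrm{div}\,\mathbf{U}+\nabla p=\mathbf{B}m$ by construction, while
$\mathrm{div}\,m=-\frac{n}{2a(n-1)}\sum_{i,j}\partial_j\partial_i\omega_{ij}=0$
follows immediately after renaming $i\leftrightarrow j$ and invoking $\omega_{ij}=-\omega_{ji}$. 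Both $\mathbf{U}$ and $m$ are polynomial expressions in the derivatives of $\vec\phi$ and $p$, hence $C_c^\infty$ with support in $\mathrm{supp}\,\vec\phi$; by starting from $p\in C_c^\infty(\Omega^{\varepsilon'})$ with $\varepsilon'<\varepsilon$ and choosing $\tilde\Omega\subset\Omega^\varepsilon$, this support sits inside $\Omega^\varepsilon$ as required.

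The main obstacle, I expect, is identifying the correct ansatz: the coefficients $-\tfrac{n}{2(n-1)}$ in front of the symmetric gradient and $\tfrac{1}{n-1}$ in front of $p\,\delta_{ij}$ are the unique normalizations that simultaneously enforce $\mathrm{tr}\,\mathbf{U}=0$ and cause the residual $\mathrm{div}\,\mathbf{U}+\nabla p$ to collapse into the divergence of an antisymmetric tensor. This antisymmetric structure is what makes $m$ automatically solenoidal, sparing us from having to solve a second elliptic problem with compactly supported output. The assumption $\mathbf{B}=a\mathbf{I}_n$ is exactly what allows the definition of $m$ to absorb the whole residual cleanly without any extra compatibility condition.
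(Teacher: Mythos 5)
Your argument is correct and is essentially the same as the paper's: the paper also starts from the Bogovski\u{\i} vector potential $\vec\phi$ of Proposition \ref{pp4.2}, forms the (not necessarily symmetric) matrix $\mathbf{A}_{ij}=\tfrac{n}{1-n}(\partial_i\phi_j-\tfrac{p}{n}\delta_{ij})$, checks $\operatorname{tr}\mathbf{A}=0$ and $\operatorname{div}\mathbf{A}=-\nabla p$, and then defines $\mathbf{U}$ and $\mathbf{V}$ as the symmetric and antisymmetric parts of $\mathbf{A}$ with $m=-\mathbf{B}^{-1}\operatorname{div}\mathbf{V}$; your $\mathbf{U}$ coincides with the paper's symmetrization, your $\omega_{ij}$ is (up to a constant factor) the paper's $\mathbf{V}$, and your direct computation that $\operatorname{div}\mathbf{U}+\nabla p$ collapses to the divergence of an antisymmetric tensor is exactly what the auxiliary $\mathbf{A}$ accomplishes. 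The only presentational difference is that you skip writing down $\mathbf{A}$ and verify the identities on $\mathbf{U}$ and $m$ directly, which is a matter of bookkeeping rather than a different idea.
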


\begin{proof}
	Define the matrix field $\mathbf{A}$ by
	\begin{align*}
		\mathbf{A}:=\left[\frac{n}{1-n}\left(\partial_i\phi_j-\frac{p}{n}\delta_{ij}\right)\right]_{ij}, \quad i,j=1,\dots,n,
	\end{align*}
	where $(\phi_j)_{j=1}^n$ are chosen as in Proposition \ref{pp4.2}. By direct computations,
	\begin{align*}
		\text{tr}(\mathbf{A})=\frac{n}{1-n}\left(\mathop{\sum}\limits_{i=1}^{n}\partial_i\phi_i-p\right)=\frac{n}{1-n}(p-p)=0.
	\end{align*}
	\quad Let $\mathbf{A}=\mathbf{U}+\mathbf{V}$, $\mathbf{U}=\frac{1}{2}(\mathbf{A}^t+\mathbf{A})$ and $\mathbf{V}=\frac{1}{2}(\mathbf{A}-\mathbf{A}^t)$. Then $\mathbf{U}\in \mathbb{S}_0^{n\times n}$, and $\mathbf{V}$ is a skew-symmetric matrix which satisfies $\mathbf{V}^t=-\mathbf{V}$. Hence
	\begin{align*}
		\text{div div } \mathbf{V}=\mathop{\sum}\limits_{i,j=1}^n\partial_i\partial_j V_{ij}=-\mathop{\sum}\limits_{i,j=1}^{n}\partial_j\partial_i V_{ij}.
	\end{align*}
	Therefore, $\text{div div }\mathbf{V}=0$.
	
	\quad Furthermore
	\begin{align*}
		\partial_j A_{ij}&=\left(\mathop{\sum}\limits_{j=1}^{n}\partial_j\partial_j\phi_j-\frac{\partial_i p}{n}\right)=\frac{n}{1-n}\partial_i\left(\mathop{\sum}\limits_{j=1}^{n}\partial_j\phi_j-\frac{p}{n}\right)\\
		&=\left(\frac{n}{1-n}\frac{n-1}{n}\right)\frac{\partial p}{\partial x_i}\\
		&=-\partial_i p.
	\end{align*}
	
	This gives $\text{div }\mathbf{A}=-\nabla p$. Set $m=-\mathbf{B}^{-1}\text{div }\mathbf{V}$. Then
	\begin{align*}
		\text{div }m=-\mathbf{B}^{-1}\text{div div } \mathbf{V}=0,
	\end{align*}
	and
	\begin{align*}
		\text{div }\mathbf{U}-\mathbf{B}m=\text{div }\mathbf{U}+\text{div }\mathbf{V}=\text{div }\mathbf{A}=-\nabla p.
	\end{align*}
	This finishes the proof of the proposition.
\end{proof}

\begin{proposition}\label{pp4.5} Let $n\geq 3$, $\mathbf{B}=a\mathbf{I}_n$, let $\rho\in C^1$ be a function such that $\rho>0$ on $\mathbb{R}^n$ and $\rho(x)=\bar{\rho}$ be constant on $\mathbb{R}^n\backslash \Omega$, and let $p(\rho)$ be a $C^1$ function such that $\int_{\Omega}p(\rho)dx=p(\bar{\rho})\left|\Omega\right|$.	Let $\tilde{\Omega}$ be a bounded locally Lipschitz domain with $\Omega\Subset\tilde{\Omega}$. Then, there exists $\tilde{\mathbf{U}}:\mathbb{R}^n\rightarrow \mathbb{S}_0^{n\times n}$, $\tilde{m}(x):\mathbb{R}^n\rightarrow \mathbb{R}^n$ and $\chi\in\mathbb{R}$ such that
	\begin{align*}
		&\text{div }\tilde{\mathbf{U}}+\nabla q=\mathbf{B}\tilde{m},\\
		&\text{supp}(\tilde{m}(x),\tilde{\mathbf{U}}(x))\subset \Omega',\\
		&\tilde{m}(x)\otimes \tilde{m}(x)-\rho(x)\tilde{\mathbf{U}}(x)<\rho(x)q(x)\mathbf{I}_n \text{ for all }x\in\mathbb{R}^n,
	\end{align*}
where $q(x)=p(\rho(x))+\frac{\chi}{n}$.
\end{proposition}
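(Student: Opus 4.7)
The plan is to reduce to two pieces handled by the auxiliary propositions already established. Since $\nabla q = \nabla p(\rho)$ coincides with $\nabla p_1$ for $p_1(x) := p(\rho(x)) - \bar{p}$, and $p_1$ has compact support in $\bar{\Omega}$ with $\int p_1\,dx = 0$ by the hypothesis on $\rho$, it suffices to construct a compactly supported pair $(\tilde m,\tilde{\mathbf{U}})$ solving $\text{div}\,\tilde m = 0$ and $\text{div}\,\tilde{\mathbf{U}} + \nabla p_1 = \mathbf{B}\tilde m$, and then take $\chi$ large. The catch is that $p_1$ is only $C^1$, so Proposition \ref{pp4.4} cannot be applied to $p_1$ directly. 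I would therefore split $p_1 = (p_1\ast\omega^\varepsilon) + p^\varepsilon$, where $p^\varepsilon := p_1 - p_1\ast\omega^\varepsilon$ is the object of Proposition \ref{pp4.1}; both summands have compact support in $\Omega^\varepsilon$ and zero integral, the first is smooth and the second only $C^1$.

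For the smooth summand $p_1\ast\omega^\varepsilon \in C_c^\infty(\Omega^\varepsilon)$, choose any bounded Lipschitz $\tilde{\Omega}\Supset\overline{\Omega^\varepsilon}$ and apply Proposition \ref{pp4.4} to obtain $(\tilde m_1,\tilde{\mathbf{U}}_1)$, compactly supported in $\tilde{\Omega}$, solving
\begin{align*}
    \text{div}\,\tilde m_1 = 0,\qquad \text{div}\,\tilde{\mathbf{U}}_1 + \nabla(p_1\ast\omega^\varepsilon) = \mathbf{B}\tilde m_1.
\end{align*}
For the $C^1$ summand $p^\varepsilon$, apply Proposition \ref{pp4.1} to get $u\in C_c^{2,\alpha}(\overline{\Omega^\varepsilon})$ with $\Delta u = p^\varepsilon$, and then set
\begin{align*}
    \tilde{\mathbf{U}}_2 := -\frac{n}{n-1}\left(\nabla^2 u - \frac{p^\varepsilon}{n}\mathbf{I}_n\right),\qquad \tilde m_2 := 0.
\end{align*}
This $\tilde{\mathbf{U}}_2$ is symmetric (Hessian), trace-free (since $\text{tr}\nabla^2 u = \Delta u = p^\varepsilon$), continuous, and compactly supported in $\overline{\Omega^\varepsilon}$. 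A short distributional computation using $\partial_j\partial_i\partial_j u = \partial_i\Delta u$ yields
\begin{align*}
    (\text{div}\,\tilde{\mathbf{U}}_2)_i = -\frac{n}{n-1}\Big(\partial_i\Delta u - \tfrac{1}{n}\partial_i p^\varepsilon\Big) = -\partial_i p^\varepsilon,
\end{align*}
so $\text{div}\,\tilde{\mathbf{U}}_2 + \nabla p^\varepsilon = 0 = \mathbf{B}\tilde m_2$.

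Adding the two pieces, $\tilde m := \tilde m_1$ and $\tilde{\mathbf{U}} := \tilde{\mathbf{U}}_1 + \tilde{\mathbf{U}}_2$ are compactly supported in $\tilde{\Omega}$ and satisfy $\text{div}\,\tilde m = 0$ together with $\text{div}\,\tilde{\mathbf{U}} + \nabla p_1 = \mathbf{B}\tilde m$, which is the desired equation since $\nabla q = \nabla p(\rho) = \nabla p_1$. For the strict matrix inequality, observe that $\tilde m\otimes\tilde m - \rho\tilde{\mathbf{U}}$ is continuous and compactly supported, hence bounded in operator norm by some $M<\infty$, while $\rho(x)q(x) = \rho(x)p(\rho(x)) + \rho(x)\chi/n \geq \rho_{\min}\chi/n$ when $\chi>0$; choosing $\chi > nM/\rho_{\min}$ (and $\chi > -n p_{\min}$ to ensure $q>0$) gives $\rho q\,\mathbf{I}_n - (\tilde m\otimes\tilde m - \rho\tilde{\mathbf{U}}) > 0$ pointwise. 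The main technical obstacle is the regularity mismatch: Proposition \ref{pp4.4} requires $C^\infty$ data whereas $p_1$ is only $C^1$, which is exactly why the decomposition via $p^\varepsilon$ together with the Akramov--Wiedemann solvability in Proposition \ref{pp4.1} is needed, and why the identity $\text{div}\,\tilde{\mathbf{U}}_2 = -\nabla p^\varepsilon$ must be interpreted distributionally (the cancellation holds for $u\in C^{2,\alpha}_c$ even though third derivatives do not exist classically).
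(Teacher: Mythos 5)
Your proposal is correct and follows essentially the same route as the paper: decompose $p_1 = p_1\ast\omega^\varepsilon + p^\varepsilon$, handle the smooth piece via Proposition \ref{pp4.4}, handle the $C^{2,\alpha}$ piece via Proposition \ref{pp4.1} together with the explicit traceless Hessian-type matrix, and add. Your treatment is in fact slightly more complete than the paper's own, since you spell out why $\nabla q = \nabla p_1$ and give the explicit bound on $\chi$ ensuring the strict matrix inequality (the paper defers that choice to Proposition \ref{pp5.2}).
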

\begin{proof}
	Let $\varepsilon>0$ be so small such that $\overline{\Omega^\varepsilon}\subset\Omega'$ and $p^\epsilon=p(\rho)-p(\rho)* \omega^{\varepsilon}$. Then by Proposition \ref{pp4.1}, there exists $u\in C_c^{2,\alpha}(\tilde{\Omega})$ such that $\Delta u=p^{\varepsilon}$ and $\text{supp } u\subset\overline{\Omega^{\varepsilon}}$. Define a matrix by
	\begin{align*}
		\mathbf{U}_{ij}^{(1)}=-\frac{n}{n-1}\frac{\partial^2u}{\partial x_i\partial x_j}\ \text{ for }i\neq j, \quad
		\mathbf{U}_{ii}^{(1)}=-\frac{n}{n-1}\frac{\partial^2u}{\partial x_i^2}+\frac{p^{\varepsilon}}{n-1}.
	\end{align*}
	Under this definition, $\mathbf{U}^{(1)}$ is symmetric and
	\begin{align*}
		\text{tr}(\mathbf{U}^{(1)})=-\frac{n}{n-1}(\Delta u-p^{\varepsilon})=0.
	\end{align*}
	Furthermore, the straightforward computations give
	\begin{align*}
		-\text{div }U_i^{(1)}=\frac{n}{n-1}\frac{\partial^3u}{\partial x_i^3}-\frac{1}{n-1}\frac{\partial}{\partial x_i}p^{\varepsilon}+\frac{n}{n-1}\frac{\partial}{\partial x_i}\Delta u-\frac{n}{n-1}\frac{\partial^3u}{\partial x_i^3}=\frac{\partial}{\partial x_i}p^{\varepsilon}.
	\end{align*}
	Hence, we have $\text{div } \mathbf{U}^{(1)}=-\nabla p^{\varepsilon}$.
	
	\quad Since $p_1*\omega^\varepsilon\in C_c^\infty(\Omega^\varepsilon)$ satisfies
	\begin{align*}
		\int_{\mathbb{R}^n}p_1*\omega^{\varepsilon}(x)dx=0,
	\end{align*}
	it follows from Proposition \ref{pp4.4} that there exists a matrix field $U^{(2)}$ and a vector field $\tilde{m}$ such that $(\tilde{m},\mathbf{U}^{(2)})\in C^\infty(\bar{\Omega})$ satisfies
	\begin{align*}
	(\tilde{m},\mathbf{U}^{(2)})\in C_c^{\infty}(\Omega'),\quad \text{div }\tilde{m}=0,\quad \text{and}\quad
	\text{div }\mathbf{U}^{(2)}+\nabla(p_1*\omega^{\varepsilon})=\mathbf{B}\tilde{m}.
	\end{align*}
	\quad Define $\tilde{\mathbf{U}}:=\mathbf{U}^{(1)}+\mathbf{U}^{(2)}$, clearly, $\text{supp}(\tilde{m}(x),\tilde{\mathbf{U}}(x))\in \Omega'$. Furthermore,
	\begin{align*}
		\text{div }\tilde{\mathbf{U}}+\nabla q=\text{div}(\mathbf{U}^{(1)}+\mathbf{U}^{(2)})+\nabla q=-\nabla p^{\varepsilon}+\mathbf{B}\tilde{m}-\nabla p_1*w^{\varepsilon}+\nabla q=\mathbf{B}\tilde{m}.
	\end{align*}
    The proof of the proposition is completed.
\end{proof}

\section{Prove of Theorem 1.1}
\quad The proof of Theorem \ref{tm1.1} is divided into several parts. The starting points are the cases in periodic space.

\begin{proposition}\label{pp5.1}
    Let $n \geq 3$, $\rho\in C(\mathbb{T}^n;\mathbb{R}_{+})$, for any given continuous function $q=q(x)>0$, there exist infinitely many weak solutions $u\in L^\infty(\mathbb{T}^n;\mathbb{R}^n)$ to \eqref{eq1} satisfying $\left|u\right|^2=\dfrac{nq}{\rho}$.
\end{proposition}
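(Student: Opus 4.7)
My plan is to prove Proposition \ref{pp5.1} by constructing a strict subsolution on $\mathbb{T}^n$ and then invoking Proposition \ref{pp3.1} to extract the infinitely many exact solutions. Under the correspondence $m = \rho u$, $\mathbf{U} = \tfrac{m\otimes m}{\rho} - \tfrac{|m|^2}{n\rho}\mathbf{I}_n$, the energy condition $|u|^2 = nq/\rho$ is equivalent to $(m,\mathbf{U})(x)\in K_{\rho(x),q(x)}$ a.e., and the system \eqref{eq1} becomes \eqref{eq2}. So it suffices to find $(\rho,m_0,\mathbf{U}_0,q)\in L^\infty(\mathbb{T}^n)$ solving \eqref{eq2} distributionally with $(m_0,\mathbf{U}_0)(x)\in\mathring{K}_{\rho(x),q(x)}^{\text{co}}$; by Lemma \ref{lm2.1}(iv) this is equivalent to the pointwise matrix inequality $\tfrac{m_0\otimes m_0}{\rho}-\mathbf{U}_0 < q\mathbf{I}_n$.

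To build the subsolution I would use the divergence-equation approach indicated in the introduction. Let $F = p(\rho) + q \in C(\mathbb{T}^n)$ with mean $\bar F$. Applying Proposition \ref{pp4.3} to the zero-mean function $F - \bar F$ yields $\phi\in L^\infty\cap W^{1,n}(\mathbb{T}^n;\mathbb{R}^n)$ with $\nabla\cdot\phi = F - \bar F$ and $\|\phi\|_{L^\infty}\leq C\|F-\bar F\|_{L^n}$. Adapting the ansatz from the proof of Proposition \ref{pp4.4}, define the traceless matrix field
\[
\mathbf{A}_{ij} = \tfrac{n}{1-n}\Bigl(\partial_i\phi_j - \tfrac{F - \bar F}{n}\delta_{ij}\Bigr),
\]
which satisfies $\nabla\cdot\mathbf{A} = -\nabla F$ distributionally. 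Splitting $\mathbf{A} = \mathbf{U}_0 + \mathbf{V}$ into symmetric and skew-symmetric parts gives $\mathbf{U}_0\in\mathbb{S}_0^{n\times n}$, and setting $m_0 = -\mathbf{B}^{-1}\nabla\cdot\mathbf{V}$ (with a plane-wave correction from Lemma \ref{lm2.2} in the kernel of $\mathcal{L}$ when $\mathbf{B}$ is singular, to restore the range compatibility) yields $\nabla\cdot m_0 = 0$, since $\mathbf{V}$ is skew-symmetric, and $\nabla\cdot\mathbf{U}_0 + \nabla F = \mathbf{B}m_0$, so \eqref{eq2} holds in the sense of distributions.

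To verify the pointwise strict inclusion I would mollify $F$ at a fine scale, run the construction above on the smoothed data to obtain smooth $(m_0^\delta,\mathbf{U}_0^\delta)$, and use the $L^\infty$ estimate on $\phi$ together with the strict positivity and continuity of $q$ to choose the scale so that $\tfrac{m_0^\delta\otimes m_0^\delta}{\rho} - \mathbf{U}_0^\delta < q\mathbf{I}_n$ uniformly in $x\in\mathbb{T}^n$. Once the strict subsolution is in place, Proposition \ref{pp3.1}, applied with $\mathcal{D} = \mathbb{T}^n$ and the continuously varying family $K_x = K_{\rho(x),q(x)}$ (continuous in the Hausdorff distance since $\rho,q$ are continuous), yields infinitely many $(m,\mathbf{U})\in L^\infty$ with $(m,\mathbf{U})(x)\in K_{\rho(x),q(x)}$ a.e., and setting $u = m/\rho$ produces the desired weak solutions of \eqref{eq1} with $|u|^2 = nq/\rho$. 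The main obstacle is the pointwise strict inclusion: Proposition \ref{pp4.3} only controls $\nabla\phi$ in $L^n$, so passing from the low-regularity ansatz to an $L^\infty$ subsolution whose size stays strictly below the prescribed continuous $q$ everywhere requires a careful quantitative argument, combining mollification, rescaling, and (in the singular-$\mathbf{B}$ case) absorbing the discrepancy into a plane-wave perturbation drawn from Lemma \ref{lm2.2}.
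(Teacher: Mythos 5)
Your overall strategy matches the paper's — build a strict subsolution on $\mathbb{T}^n$, then run the iteration machinery — but the specific way you build the subsolution introduces a problem that the paper avoids, and the key obstacle you flag is not actually resolved by the steps you outline.

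The most serious issue is the definition $m_0 = -\mathbf{B}^{-1}\nabla\cdot\mathbf{V}$: this requires $\mathbf{B}$ to be invertible, whereas Proposition~\ref{pp5.1} (and Theorem~\ref{tm1.1}(a)) place \emph{no} restriction on $\mathbf{B}$ — it may be singular or zero. Your proposed rescue, a ``plane-wave correction from Lemma~\ref{lm2.2} in the kernel of $\mathcal{L}$,'' is not worked out, and it is not clear how a plane wave in $\ker\mathcal{L}$ could cancel the generic divergence-free field $\nabla\cdot\mathbf{V}$ that fails to lie in the range of $\mathbf{B}$. The paper's proof simply takes $m_0 = 0$, which collapses the second equation of \eqref{eq2} to $\nabla\cdot\mathbf{U}_0 = -\nabla(p+q)$, entirely independent of $\mathbf{B}$; the invertibility issue never arises. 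Importing the decomposition $\mathbf{A}=\mathbf{U}_0+\mathbf{V}$ from Proposition~\ref{pp4.4} (which is proved under $\mathbf{B}=a\mathbf{I}_n$) manufactures an inessential skew part that you then have to absorb; on the torus one should instead construct a symmetric trace-free $\mathbf{U}_0$ with $\nabla\cdot\mathbf{U}_0=-\nabla F$ directly, e.g. via $\mathbf{U}_0 = -\tfrac{n}{n-1}\bigl(\nabla^2\Delta^{-1}(F-\bar F)-\tfrac{F-\bar F}{n}\mathbf{I}_n\bigr)$.

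Two smaller remarks. First, you are right that the relevant source is $F=p(\rho)+q$ rather than $p(\rho)$ alone: the paper's written proof applies Proposition~\ref{pp4.3} to $\partial_i p$ and leaves $\nabla q$ unbalanced, which looks like a slip that your version corrects. Second, you correctly identify the pointwise strict inclusion $-\mathbf{U}_0 < q\mathbf{I}_n$ as the crux, but the mollify-and-rescale sketch does not close it: Proposition~\ref{pp4.3} controls $\phi$ in $L^\infty$ but $\nabla\phi$ only in $L^n$, so $\mathbf{U}_0$ (built from $\partial_i\phi_j$) is not a priori bounded, mollifying $F$ does not shrink $\|\nabla\phi\|_{L^\infty}$, and with $q$ prescribed there is no scaling parameter left to force $\mathbf{U}_0$ below it. That difficulty is real — the paper's proof simply asserts $(0,\mathbf{U})\in X_0$ without verification — so flagging it is correct, but the argument you propose does not overcome it.
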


\begin{proof}
    Since $\rho$ is a continuous positive function on $\mathbb{R}^{n}$, and $p$ is a continuous function of $\rho$. Define
	\begin{align*}
		X_0=\{w\in L^{\infty}_{w^\ast}: w \text{ is a subsolution such that } w(x)\in\mathcal{U}_{\rho,q}\quad \text{for all}\quad x\in\mathbb{R}^{n}\}.
	\end{align*}
    \quad Since $q$ is a continuous function on $\mathbb{R}^n$, if $w(x)=(m(x),\mathbf{U}(x))\in\mathcal{U}_{\rho,q}\subset K_{\rho,q}^{\text{co}}$, then
	\begin{align*}
		\left|m\right|^2\leq n\bar{\rho}\bar{q},\quad \left|\mathbf{U}\right|\leq 2n\bar{\rho}\bar{q},
	\end{align*}
    where $\bar{q}=\max q(x)$ and $\bar{\rho}=\max\rho(x)$. Therefore, $X_0$ is bounded in $L^2$. We define $X$ be the closure of $X_0$ in the weak $L^2$-topology. 
	
 \quad Next, one needs to construct a subsolution to the system \eqref{eq2}. Since $\rho\in C(\mathbb{T}^n;\mathbb{R}_{+})$, then $\int_{\mathbb{T}^n}\nabla p(x)dx=0$. By Proposition \ref{pp4.3}, for any $\partial_i p$, there exists an vector valued function $u^{(i)}$ satisfying $\text{div }u^{(i)}=\partial _i p$. Define the matrix valued function $\mathbf{U}$ as $\mathbf{U}_{ij}=-u_j^{(i)}$, then $\text{div }\mathbf{U}=-\nabla p$. Hence, $w_0=(0,\mathbf{U})$ is a strict subsolution to \eqref{eq2} and $w_0\in X_0$ by definition.
 
	\quad We denote the mapping $w\mapsto\int\left|w\right|^2dx$, which is a Baire-1 function in $X$. Hence, its continuity points form a residual set in $X$. By Lemma \ref{lm3.2}, after a rescaling, one can show that there exists a continuous strictly increasing function $\tilde{\Phi}$ with $\tilde{\Phi}(0)=0$ such that for any $w\in X_0$, there exists a sequence $w_k\in X_0$ such that
	\begin{align*}
		w_k\rightharpoonup w\quad \text{weakly in}\quad L^2 \quad \text{and}\quad
		\int\left|w_k-w\right|^2dx\geq\tilde{\Phi}(\int\text{dist}(w,K_{\rho,q})).
	\end{align*}
	\quad By diagonal procedure, continuity points of the mapping $w\mapsto\int\left|w\right|^2dx$ in $X$ are subsolutions $w$ such that $w(x)\in K_{\rho,q}$ for almost every $x\in \mathbb{R}^n$. The residual set in $X$ is dense, hence there exist infinitely many weak solutions to system \eqref{eq1} satisfying $\left|m\right|^2=n\rho q$.
\end{proof}

\quad Next, we consider the case for specific source terms and densities without periodic boundary conditions.
\begin{proposition}\label{pp5.2}
    Let $n\geq 3$, and $\mathbf{B}=a\mathbf{I}_n$ for some $a>0$, let $\Omega\subset\mathbb{R}^n$ be a bounded open set, $\rho\in C^1(\mathbb{R}^n;\mathbb{R}_{+})$ satisfies $\rho(x)=\bar{\rho}>0$ for $x\in \mathbb{R}^n\backslash\Omega$ and $p\in C^1(\mathbb{R}^n;\mathbb{R}_{+})$ satisfies $\int_{\Omega}p(\rho(x))dx=p(\bar{\rho})\left|\Omega\right|$. Then, there exists a function $q=q(x)$ such that \eqref{eq1} has infinitely many compact supported weak solutions $u\in L^\infty(\mathbb{R}^n;\mathbb{R}^n)$ satisfying $\left|u\right|^2=\dfrac{nq}{\rho}$.
\end{proposition}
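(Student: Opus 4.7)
The plan is to combine the compactly supported strict subsolution produced by Proposition 4.5 with the iteration scheme of Proposition 3.1. The hypotheses of Proposition 4.5 match exactly the assumptions of the present proposition ($\mathbf{B}=a\mathbf{I}_n$, $\rho\in C^1$ with $\rho=\bar\rho$ outside $\Omega$, and the integral compatibility on $p(\rho)$), so a compactly supported starting subsolution is directly available.

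First, fix a small $\varepsilon>0$ and a bounded locally Lipschitz domain $\tilde\Omega$ with $\overline{\Omega^\varepsilon}\subset\tilde\Omega$, and apply Proposition 4.5. This yields smooth fields $\tilde m$ and $\tilde{\mathbf{U}}$ supported in a bounded open set $\Omega'\subset\tilde\Omega$ together with a constant $\chi\in\mathbb{R}$ for which, setting $q_1(x):=p(\rho(x))+\chi/n$, the quadruple $(\rho,\tilde m,\tilde{\mathbf{U}},q_1)$ solves \eqref{eq2} and satisfies $\tilde m\otimes\tilde m-\rho\tilde{\mathbf{U}}<\rho q_1\mathbf{I}_n$ pointwise on $\mathbb{R}^n$. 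Since $(\tilde m,\tilde{\mathbf{U}})\equiv(0,0)$ outside $\Omega'$, this strict inequality forces $q_1>0$ everywhere. Take $\mathcal{D}:=\Omega'$ and constraint sets $K_x:=K_{\rho(x),q_1(x)}$; the continuity of $\rho$ and $q_1$ makes $x\mapsto K_x$ continuous in the Hausdorff distance, and the convex hull description $K_{\rho,q_1}^{\text{co}}=\{(m,\mathbf{U}):m\otimes m-\rho\mathbf{U}\leq\rho q_1\mathbf{I}_n\}$ derived from Lemma 2.1(iv) shows that the strict matrix inequality is precisely the condition $(\tilde m,\tilde{\mathbf{U}})(x)\in\mathring{K}_x^{\text{co}}$. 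Hence $(\rho,\tilde m,\tilde{\mathbf{U}},q_1)$ is a strict subsolution in $\mathcal{D}$ with constraint sets $K_x$.

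Next, apply Proposition 3.1 to this strict subsolution. It produces infinitely many pairs $(m,\mathbf{U})\in L^\infty$ such that $(\rho,m,\mathbf{U},q_1)$ solves \eqref{eq2}, $(m,\mathbf{U})(x)\in K_x$ for a.e.\ $x\in\bar{\mathcal{D}}$, and $\mathrm{supp}(m-\tilde m,\mathbf{U}-\tilde{\mathbf{U}})\subset\bar{\mathcal{D}}=\bar{\Omega'}$. Off $\bar{\mathcal{D}}$, both $(m,\mathbf{U})$ and $(\tilde m,\tilde{\mathbf{U}})$ vanish, so $u:=m/\rho$ is compactly supported in $\bar{\Omega'}$. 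On $\bar{\mathcal{D}}$, the inclusion $(m,\mathbf{U})\in K_{\rho,q_1}$ gives $m\otimes m/\rho=\mathbf{U}+q_1\mathbf{I}_n$; taking the trace yields $|u|^2=nq_1/\rho$, and substituting $\mathbf{U}=m\otimes m/\rho-q_1\mathbf{I}_n$ into the momentum equation in \eqref{eq2} converts it into the weak Euler momentum equation (the $\nabla(p+q_1)$ term is annihilated by pairing with divergence-free test vector fields). Defining $q(x):=\rho(x)|u(x)|^2/n$ gives $q=q_1$ on $\bar{\mathcal{D}}$ and $q=0$ elsewhere, and the identity $|u|^2=nq/\rho$ holds on all of $\mathbb{R}^n$.

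The main obstacle is verifying the strict subsolution hypothesis of Proposition 3.1, namely the interior inclusion $(\tilde m,\tilde{\mathbf{U}})(x)\in\mathring{K}_x^{\text{co}}$ on the entire open set $\mathcal{D}$ (not only on the support of $(\tilde m,\tilde{\mathbf{U}})$) together with Hausdorff continuity of $x\mapsto K_x$. Both issues are dispatched by combining Proposition 4.5 (which supplies the strict matrix inequality on all of $\mathbb{R}^n$, reducing to $q_1>0$ off $\mathrm{supp}(\tilde m,\tilde{\mathbf{U}})$) with Lemma 2.1(iv). A secondary subtlety is the patching of $u$ with zero across $\partial\mathcal{D}$ when verifying the weak form of \eqref{eq1}: this is harmless because the perturbations produced by Proposition 3.1 are compactly supported in $\bar{\mathcal{D}}$ and the seed subsolution already vanishes off $\Omega'=\mathcal{D}$, so no boundary distribution is generated when integrating by parts against a divergence-free test field on $\mathbb{R}^n$.
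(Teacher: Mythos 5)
Your proposal follows essentially the same route as the paper: obtain a compactly supported strict subsolution $(\tilde m,\tilde{\mathbf{U}})$ via Proposition 4.5, then feed it into Proposition 3.1 to produce infinitely many compactly supported weak solutions. You supply more of the bookkeeping than the paper does — making explicit that $\mathcal{D}=\Omega'$, that $K_x=K_{\rho(x),q_1(x)}$ is Hausdorff-continuous, that the strict matrix inequality of Proposition 4.5 together with Lemma 2.1(iv) is exactly the interior-inclusion hypothesis of Proposition 3.1, and how the profile $q$ (equal to $q_1$ on $\bar{\mathcal{D}}$ and $0$ outside) is solution-independent — but the argument and all load-bearing lemmas are the same.
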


\begin{proof}
    By assumption, Proposition \ref{pp4.5} implies there exists a strict subsolution $\tilde{\mathbf{U}}:\mathbb{R}^n\rightarrow \mathbb{S}_0^{n\times n}$, and $\tilde{m}(x):\mathbb{R}^n\rightarrow \mathbb{R}^n$ such that $\text{supp}(\tilde{m}(x),\tilde{\mathbf{U}}(x))\subset \Omega'$, and
    	\begin{align*}
		&\text{div }\tilde{\mathbf{U}}+\nabla q=\mathbf{B}\tilde{m},\\
		&\tilde{m}(x)\otimes \tilde{m}(x)-\rho(x)\tilde{\mathbf{U}}(x)<\rho(x)q(x)\mathbf{I}_n\quad \text{for all}\quad x\in\mathbb{R}^n.
	\end{align*}
	for some $\chi\in\mathbb{R}$. We choose $\chi$ big enough such that $(\tilde{m},\tilde{\mathbf{U}})$ is a strict subsolution to the system \eqref{eq2} with constraint set $K_{\rho,q}$ and $\text{supp}(\tilde{m},\tilde{\mathbf{U}})\subset\Omega'$. Hence, by Proposition \ref{pp3.1}, we can construct infinitely many pairs $(m,\mathbf{U})$ such that $(\rho,m,\mathbf{U},q)$ are subsolutions and $(m,U)(x)\in K_{\rho,q}$, and $\text{supp}(m-\tilde{m},\mathbf{U}-\tilde{\mathbf{U}})\subset\tilde{\Omega}$, which imply the existence of the compact supported solutions.
\end{proof}

\quad Finally, we consider the case for general source terms.

\begin{proposition}\label{pp5.3}
    Let $n\geq 3$, let $\Omega\subset\mathbb{R}^n$ be a bounded open set, $\rho\in C^0(\mathbb{R}^n;\mathbb{R}_{+})$ satisfies $\rho=\bar{\rho}>0$ for $x\in\mathbb{R}^n\backslash\Omega$ and $\rho<\bar{\rho}$ for $x\in\Omega$. Then, there exists a function $q=q(x)$ such that there exist infinitely many compact supported weak solutions $u\in L^\infty$ to \eqref{eq1} satisfying $\left|u\right|^2=\frac{nq}{\rho}$.
\end{proposition}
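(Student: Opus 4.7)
The plan is to reduce everything to a single application of Proposition~\ref{pp3.1} with the trivial vanishing pair as the strict subsolution. The key observation is that under the hypothesis $\rho < \bar{\rho}$ in $\Omega$, together with strict monotonicity of $p$, we have enough freedom to \emph{choose} $q$ so that $p(\rho) + q$ is identically constant; the zero pair $(\tilde m, \tilde{\mathbf{U}}) = (0, 0)$ then satisfies the reformulated system \eqref{eq2} trivially, for \emph{any} constant matrix $\mathbf{B}$. This sidesteps the need for the Poisson/rotation construction of Proposition~\ref{pp4.5}, which required $\mathbf{B} = a\mathbf{I}_n$, and is precisely where the generality of the source term is absorbed.

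More concretely, I would first set
\begin{align*}
    q(x) := p(\bar{\rho}) - p(\rho(x)), \qquad x \in \mathbb{R}^n.
\end{align*}
Since $\rho \in C^0(\mathbb{R}^n)$ and $p \in C^1$, $q$ is continuous. Outside $\Omega$, $\rho = \bar{\rho}$ yields $q = 0$; inside $\Omega$, $\rho < \bar{\rho}$ together with $p' > 0$ yields $q > 0$. By construction $p(\rho)+q \equiv p(\bar{\rho})$, so $\nabla(p(\rho)+q) \equiv 0$ on $\mathbb{R}^n$. Next I would check that $(\rho, 0, 0, q)$ is a strict subsolution on the open set $\mathcal{D} := \Omega$ with constraint sets $K_x := K_{\rho(x),q(x)}$: the two divergence equations in \eqref{eq2} reduce to $0=0$ and $0 = \mathbf{B}\cdot 0$ trivially; continuity of $x \mapsto K_x$ in the Hausdorff distance is inherited from continuity of $\rho$ and $q$; and the strict inclusion $(0,0) \in \mathring{K}_x^{\mathrm{co}}$ on $\Omega$ amounts to $0 < \rho(x) q(x)\mathbf{I}_n$, which holds because $\rho, q > 0$ on $\Omega$.

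With this strict subsolution in hand, Proposition~\ref{pp3.1} produces infinitely many pairs $(m, \mathbf{U})$ such that $(\rho, m, \mathbf{U}, q)$ is a weak solution of \eqref{eq2}, $(m, \mathbf{U})(x) \in K_{\rho(x),q(x)}$ for a.e.\ $x \in \bar{\Omega}$, and $\mathrm{supp}(m, \mathbf{U}) \subset \bar{\Omega}$. Membership in $K_{\rho,q}$ is equivalent to $|m|^2 = n\rho q$ together with $\mathbf{U} = m\otimes m/\rho - (|m|^2/(n\rho))\mathbf{I}_n$, so setting $u := m/\rho$ gives $|u|^2 = nq/\rho$ a.e.\ in $\bar{\Omega}$, while $u = 0$ outside $\bar{\Omega}$ (where also $q=0$, so the identity $|u|^2 = nq/\rho$ is preserved globally); in particular $u$ is compactly supported and, since $\rho$ is bounded away from zero on $\bar{\Omega}$, belongs to $L^\infty$. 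I do not foresee a serious obstacle: the argument isolates a single place where the strict density inequality $\rho < \bar{\rho}$ is genuinely used (it forces $q > 0$ on $\Omega$, making $(0,0)$ strictly interior to $K^{\mathrm{co}}_{\rho, q}$), while the oscillation-adding machinery of Sections~2--3---in particular the localized plane waves of Lemma~\ref{lm2.6}---already handles an arbitrary constant source matrix $\mathbf{B}$ without modification.
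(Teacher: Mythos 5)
Your proposal follows the same overall route as the paper's proof: take the zero pair $(\tilde m,\tilde{\mathbf U})=(0,0)$ as the strict subsolution on $\Omega$, feed it into Proposition~\ref{pp3.1} to obtain infinitely many compactly supported pairs landing in $K_{\rho,q}$, and read off $u=m/\rho$ with $|u|^2=nq/\rho$. The one substantive difference is your choice of $q$. The paper sets $q(x)=\bar\rho-\rho(x)$, while you set $q(x)=p(\bar\rho)-p(\rho(x))$. Your choice is actually the correct one for the argument to close: for $(\rho,0,0,q)$ to solve the relaxed system \eqref{eq2}, the second equation requires $\nabla\bigl(p(\rho)+q\bigr)=0$, i.e.\ $p(\rho)+q$ constant. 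Your $q$ gives $p(\rho)+q\equiv p(\bar\rho)$, identically constant, while the paper's $q$ gives $p(\rho)+\bar\rho-\rho$, whose gradient is $(p'(\rho)-1)\nabla\rho$ and does not generally vanish unless $p'\equiv 1$. Both choices satisfy the remaining requirements equally well (continuity, $q=0$ off $\Omega$, $q>0$ on $\Omega$ so that the origin is in $\mathring K^{\mathrm{co}}_{\rho(x),q(x)}$, and the advertised relation $|u|^2=nq/\rho$ in the respective $q$), but only yours makes the zero pair an honest distributional solution of \eqref{eq2}, which is what the definition of strict subsolution demands. In short: same strategy, but you have repaired the one place where the paper's stated construction leaves a gap.
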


\begin{proof}
	Since $\rho=\rho(x)$ satisfies $\rho=\bar{\rho}>0$ for $x\in \mathbb{R}^n\backslash\Omega$, and $\rho<\bar{\rho}$ for $x\in \Omega$. Let $q(x):=\bar{\rho}-\rho(x)$, then $q(x)=0$ for $x\in \mathbb{R}^n\backslash \Omega$, and $q(x)>0$ for $x\in \Omega$. Let $\bar{w}=(0,0)$, then $(\rho,\bar{w},q)$ is a strict subsolution in $\Omega$. By Proposition \ref{pp3.1}, there exist infinitely many pairs $w=(m,\mathbf{U})$ such that $(\rho,w,q)$ are subsolutions and
	\begin{align*}
		(m,\mathbf{U})(x)\in K_{\rho,q}\quad \text{for a.e.}\quad x\in\bar{\Omega}, \text{ and supp}(m,\mathbf{U})\subset\bar{\Omega}
	\end{align*}
    Hence, there are infinitely many compact supported weak solutions to \eqref{eq1}.
\end{proof}

Combining Propositions \ref{pp5.1}, \ref{pp5.2} and \ref{pp5.3}, we complete the proof of Theorem \ref{tm1.1}.
 
\textbf{Acknowledgements}
 
The author is grateful to his advisor Chunjing Xie for stimulating discussions and for several detailed comments to improve the presentation of this paper.
\bibliographystyle{abbrv}
\bibliography{references}  
\end{document}